\swapnumbers \numberwithin{equation}{section}
\theoremstyle{plain}
\newtheorem{thm}{Theorem}[section]
\newtheorem{theorem}[thm]{Theorem}
\newtheorem{lemma}[thm]{Lemma}
\newtheorem{prop}[thm]{Proposition}
\newtheorem{cor}[thm]{Corollary}
\theoremstyle{definition}
\newtheorem{defn}[thm]{Definition}
\newtheorem{rem}[thm]{Remark}
\newtheorem{ex}[thm]{Example}
\newtheorem{question}[thm]{Question}
\DeclareMathOperator{\cat}{{\mbox{\rm cat$_{\rm LS}$}}}
\DeclareMathOperator{\cd}{{\rm cd}}
\DeclareMathOperator{\TC}{{\rm TC}}
 \DeclareMathOperator{\idTC}{{\rm idTC}}
 \DeclareMathOperator{\vdTC}{{\rm vdTC}}
\DeclareMathOperator{\supp}{{\rm supp}}
\DeclareMathOperator{\dTC}{{\rm dTC}}
\def\cat{\protect\operatorname{cat}}
\def\acat{\protect\operatorname{acat}}
\def\C{{\mathbb C}}
\def\Z{{\mathbb Z}}
\def\Q{{\mathbb Q}}
\def\R{{\mathbb R}}
\def\1{\hbox{\rm\rlap {1}\hskip.03in{\rom I}}}
\def\Bbbone{{\rm1\mathchoice{\kern-0.25em}{\kern-0.25em}
{\kern-0.2em}{\kern-0.2em}I}}
\long\def\forget#1\forgotten{} %
\newcommand\ver[1]{\marginpar{\tiny Changed in Ver \VER}}
\date{\today}
\begin{document}

\title[Distributional TC]{Distributional Topological Complexity and LS-category}

\author[A.~Dranishnikov]{Alexander  Dranishnikov$^{1}$} 

\author[E.~Jauhari]{Ekansh Jauhari}

\thanks{$^{1}$Supported by Simons Foundation}

\address{Alexander N. Dranishnikov, Department of Mathematics, University
of Florida, 358 Little Hall, Gainesville, FL 32611-8105, USA.}
\email{dranish@ufl.edu}

\address{Ekansh Jauhari, Department of Mathematics, University
of Florida, 358 Little Hall, Gainesville, FL 32611-8105, USA.}
\email{ekanshjauhari@ufl.edu}

\keywords{}

\begin{abstract}
We define a new version of Topological Complexity (TC) of a space, denoted as $\dTC$,
which, we think, fits better for motion planning for some autonomous systems. Like Topological complexity, dTC is also a homotopy invariant. Also, $\dTC$ has a corresponding analog, denoted as $d\cat$, to the Lusternik-Schnirelmann category (cat). In this paper, we do computations and estimates 
for both $\dTC(X)$ and $d\cat(X)$ for some spaces $X$ as well as a comparison with $\TC(X)$ and $\cat(X)$.

\end{abstract}


  \keywords{Topological complexity, Lusternik-Schnirelmann category, distributed navigation algorithm, symmetric product}

\maketitle

\section{Introduction}
Suppose that we have an autonomous mechanical system with the configuration space $X$. The system could be very complicated like Terminator 1 from the famous movie. Now we want to build an algorithm to get from a position $x\in X$ to a position $y\in X$ for each $x$ and $y$. Michael Farber noticed~\cite{Far1},\cite{Far2} that such an algorithm cannot be continuous as a function of two variables $(x,y)\in X\times X$ for most 
of the spaces $X$ and he introduced the topological complexity of $X$ as the minimal number of patches on $X\times X$ on which a continuous algorithm is possible.

In this paper, we propose a continuous algorithm on all $X\times X$ for more advanced autonomous systems, more like Terminator 2. The idea is that at position $x$, the system can break into finitely many pieces and each of the pieces travel to $y$ independently. Then at position $y$, all the pieces reassemble back into the system. This can be done continuously on $X\times X$.

Let us fix an upper bound $n$ on the number of pieces.
We assume that our system has mass one and for traveling from $x$ to $y$, it breaks into at most $n$ pieces such that the sum of the weights of the pieces is one.
We define {\em an $n$-distributed path} from $x$ to $y$ to be an unordered collection of $n$ paths in $X$ from $x$ to $y$ with 
non-negative weights such that the sum of the weights is one. We allow weights to be zero.
Thus, for each $x,y\in X$, our algorithm should give an $n$-distributed path from $x$ to $y$ which depends continuously on $(x,y)\in X\times X$. 

We are looking for a continuous algorithm in the following sense.
Let $\mathcal B(Z)$ denote the set of probability measures on a topological space $Z$ and $$\mathcal B_n(Z)=\{\mu\in\mathcal B(Z)\mid |\supp(\mu)| \le n\}$$ be the subspace of measures
supported by at most $n$ points. So, each such measure can be written as a formal sum  $$\mu=\sum_{z\in F\subset Z,\ |F|\le n}\lambda_zz,$$
where $\lambda_z\ge 0$ and $\sum\lambda_z=1$.
By definition, the support of $\mu$ is the following subset of $Z$: $\supp\mu=\{z\in Z\mid \lambda_z>0\}.$ 

In the case when $Z$ is a metric space, we consider the Lévy-Prokhorov metric on $\mathcal B_n(Z)$~\cite{P}. 
We identify the space $Z$ with a subspace of $\mathcal B_n(Z)$ by means of Dirac measures $\delta_z$ for every $z \in Z$.

Let $P(X)=\{f \hspace{1mm} | \hspace{1mm} f:[0,1]\to X\}$ be the space of all paths in $X$ and let $P(x,y)\subset P(X)$ be the subspace of all paths from $x$ to $y$.
Each element $\mu\in\mathcal B_n(P(x,y))$ is a  formal sum of paths from $x$ to $y$ with weights, so it is an  {\em $n$-distributed path} from $x$ to $y$.
Our desired continuous algorithm on $X$ is a continuous map $$s:X\times X\to \mathcal B_n(P(X))$$ such that
$s(x,y)\in \mathcal B_n(P(x,y))$ for all $x$ and $y$.

Given a space $X$, what is the smallest $n$ when the above continuous algorithm does exist? This question brings to life a new numerical invariant
which we call the {\em distributional topological complexity} of $X$.

The paper is organized as follows. In Section \ref{Classical numerical invariants}, we recall the Ganea-Schwarz approach to the classical numerical invariants: TC and cat. In Section \ref{New Numerical Invariants}, we define our new invariants: dTC and $d\cat$, and prove various properties for them, including their homotopy invariance and their relationships with each other and the classical invariants. In Section \ref{Lower Bounds}, we obtain sharp cohomological lower bounds for $d\cat$ and $\dTC$ using the cohomology of the symmetric products of $X$ and $X\times X$, respectively. Section \ref{Characterization} is devoted to the characterization of $d\cat$ and dTC to get parallel with Section \ref{Classical numerical invariants}. In Section \ref{Some Computations}, we compute dTC and $d\cat$ for various nice closed manifolds and obtain the formula for the dTC of the wedge-sum of aspherical complexes. We end this paper by proposing another numerical homotopy invariant in Section \ref{Epilogue}, which we call idTC.

\section{Classical Numerical Invariants}\label{Classical numerical invariants}
All the topological spaces considered here are path-connected metric ANR spaces. We recall classical definitions of the Lusternik-Schnirelmann category, $\cat (X)$, ~\cite{LuS},\cite{J},\cite{CLOT} and the topological complexity, $\TC(X)$, of a space $X$~\cite{Far1}, \cite{Far2}.

The {\emph{Lusternik-Schnirelmann category} (LS-category), $\cat (X)$, of  $X$ is the least number $n$ such that there is a covering  
$\{U_i\}$ of $X$ by $n+1$ open sets each of which is contractible in $X$.

The \emph{topological complexity}, $\TC(X)$, of  $X$ is the least number $n$  such that there is a covering $\{U_i\}$ of $X\times X$ by $n+1$ open sets over each of which there 
is a motion planning algorithm. We recall that a \emph{motion planning algorithm} over an open subset $U\subset X\times X$ is a continuous map $U\to P(X)$ that takes a pair $(x,y)$ to a path $s$ in $X$ with endpoints $s(0)=x$ and $s(1)=y$.

\subsection{Ganea-Schwarz's approach to LS-category.} Given a topological space $X$ with a fixed base point $x_{0} \in X$, let us define the space $$P_{0}(X) = \left\{\phi: [0,1] \to X \hspace{1mm}| \hspace{1mm} \phi(1) = x_{0}\right\}$$ with subspace topology inherited from the compact-open topology of $P(X)$. Let $p: P_{0}(X) \to X$ be the evaluation fibration $p: \phi \mapsto \phi(0)$. Then we define the $n^{th}$ Ganea space, denoted $G_{n}(X)$, to be the fiberwise join of  $(n+1)$-copies of $P_{0}(X)$ along $p$, i.e.,
$$
G_{n}(X) = \left\{\sum_{i=1}^{n+1} \lambda_{i} \phi_{i} \hspace{1mm} \middle| \hspace{1mm} \phi_{i} \in P_{0}(X), \sum_{i=1}^{n+1}\lambda_{i} = 1, \lambda_{i} \geq 0, \phi_{i}(0) = \phi_{j}(0)\right\},
$$
where each element is a formal ordered linear combination of based paths such that all terms where $\lambda_{i} = 0$ are dropped. Further, we define the $n^{th}$ Ganea fibration, $p_{n}^{X} : G_{n}(X) \to X$, as the fiberwise join of $(n+1)$-copies of the fibration $p$, i.e., $p_{n}^{X} (\sum_{i=1}^{n+1} \lambda_{i} \phi_{i}) = \phi_{i}(0)$, for any $i$ such that $\lambda_{i} > 0$. Then the following theorem gives the Ganea-Schwarz characterization of the LS-category~\cite{Sch}, \cite{CLOT}.

\begin{theorem}\label{original}
    For any $X$, $\textup{\text{cat}}(X) \leq n$ if and only if the fibration $p_{n}^{X}: G_{n}(X) \to X$ admits a section.
\end{theorem}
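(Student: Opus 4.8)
The plan is to prove both directions of the equivalence by relating sections of the Ganea fibration $p_n^X$ to categorical open covers of $X$, using the classical fact that the fiber of $p_n^X$ is the $(n+1)$-fold join of the fiber of $p$, which is contractible (since $P_0(X)$ is contractible, the fiber of $p$ over $x_0$ is the based loop space, and more relevantly the relevant fibrational homotopy-theoretic input is that $p_n^X$ is, up to fiber homotopy equivalence, the $n$-th Ganea fibration constructed inductively from $p_0^X = p$). I will take for granted the standard properties of the fiberwise join construction: that a section of the fiberwise join $p_n^X$ exists if and only if $X$ can be covered by $n+1$ open sets on each of which $p_0^X = p : P_0(X) \to X$ admits a \emph{local} section, i.e. a partial section over that open set.

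First, for the forward direction, suppose $\cat(X) \le n$, so there is an open cover $U_0, \dots, U_n$ of $X$ with each inclusion $U_i \hookrightarrow X$ null-homotopic. A null-homotopy of $U_i \hookrightarrow X$ is precisely a homotopy $H_i : U_i \times [0,1] \to X$ with $H_i(x,0) = x$ and $H_i(x,1) = x_0$; for each $x \in U_i$ this gives a based path $\phi_i^x \in P_0(X)$ with $\phi_i^x(0) = x$, depending continuously on $x$. Hence $p$ admits a partial section $\sigma_i : U_i \to P_0(X)$ over each $U_i$. Then I would take a partition of unity $\{\lambda_i\}$ subordinate to $\{U_i\}$ and define a global section $s : X \to G_n(X)$ by $s(x) = \sum_{i=0}^{n} \lambda_i(x)\, \sigma_i(x)$, checking that this lands in $G_n(X)$ (all the $\phi_i$ appearing have the same initial point $x$, since $\phi_i(0)=x$ whenever $\lambda_i(x)>0$) and satisfies $p_n^X \circ s = \id_X$. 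Care must be taken at points where some $\lambda_i(x) = 0$ so that $\sigma_i(x)$ may be undefined, but the convention that terms with zero coefficient are dropped handles this.

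For the converse, suppose $p_n^X : G_n(X) \to X$ admits a section $s$. Writing $s(x) = \sum_{i=1}^{n+1} \lambda_i(x)\, \phi_i(x)$ (a continuous assignment in $G_n(X)$), I set $U_i = \{x \in X \mid \lambda_i(x) > 0\}$, an open subset of $X$; these cover $X$ since the barycentric coordinates sum to $1$. Over $U_i$ the $i$-th path is a genuine partial section $\sigma_i : U_i \to P_0(X)$ of $p$, and the adjoint $U_i \times [0,1] \to X$, $(x,t) \mapsto \sigma_i(x)(t)$, is a homotopy from $\iota_{U_i}$ to the constant map $x_0$, so each $U_i$ is contractible in $X$. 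Therefore $\cat(X) \le n$.

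The main obstacle, and the only point requiring genuine care rather than bookkeeping, is making the join-coordinate arguments rigorous in both directions at the locus where some weights vanish: the maps $\phi_i$ are only defined (continuously) on the open set where $\lambda_i > 0$, so one must argue that the formal sum still defines a continuous map into the fiberwise join $G_n(X)$ with its quotient topology, and conversely that extracting $U_i$ and $\sigma_i$ from a section is continuous. This is exactly the content of Schwarz's genus theory \cite{Sch} applied to the fibration $p$ (equivalently, the standard Ganea characterization \cite{CLOT}), so I would simply cite that framework; the two displayed constructions above are the explicit unwinding of that general statement for $p_0^X = p$ together with the identification of partial sections of $p$ with categorical (null-homotopy) data.
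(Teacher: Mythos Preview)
The paper does not give its own proof of this theorem; it is stated as a classical result with citations to Schwarz \cite{Sch} and Cornea--Lupton--Oprea--Tanr\'e \cite{CLOT}. Your sketch is the standard argument found in those references (partition of unity to glue local sections into a section of the fiberwise join in one direction, barycentric-coordinate open sets $U_i=\{\lambda_i>0\}$ with the $i$-th path giving a contracting homotopy in the other), and it is correct as written, including your handling of the continuity issues at the locus where coefficients vanish.
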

\subsection{Ganea-Schwarz's approach to TC}
Given a topological space $X$, let $\overline{p}: P(X) \to X\times X$ be the end-points fibration $\overline{p}: \phi \mapsto (\phi(0),\phi(1))$. Then we define the $n^{th}$ Schwarz-Ganea space, denoted $\Delta_{n}(X)$, to be the fiberwise join of  $(n+1)$-copies of $P(X)$ along $\overline{p}$, i.e.,
$$
\Delta_{n}(X) = \left\{\sum_{i=1}^{n+1} \lambda_{i} \phi_{i} \hspace{1mm} \middle| \hspace{1mm} \phi_{i} \in P(X), \sum_{i=1}^{n+1}\lambda_{i} = 1, \lambda_{i} \geq 0, \overline{p}\phi_{i}= \overline{p}\phi_{j}\right\},
$$
where each element is a formal ordered linear combination of  paths such that all terms where $\lambda_{i} = 0$ are dropped. We define the $n^{th}$ Schwarz-Ganea fibration, $\bar p_{n}^{X} : \Delta_{n}(X) \to X\times X$, as  $$\bar p_{n}^{X} \left(\sum_{i=1}^{n+1} \lambda_{i} \phi_{i}\right) = (\phi_{i}(0),\phi_{i}(1))$$ for any $i$ with $\lambda_{i} > 0$. Then the following theorem gives the Ganea-Schwarz characterization of the topological complexity \cite{Sch}.

\begin{theorem}\label{original2}
    For any $X$, $\TC(X) \leq n$ if and only if the fibration $\bar p_{n}^{X}: \Delta_{n}(X) \to X\times X$ admits a section.
\end{theorem}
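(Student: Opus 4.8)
The plan is to reduce Theorem~\ref{original2} to the already-stated Theorem~\ref{original}, exactly as in Schwarz's original treatment, by recognizing that the topological complexity of $X$ is nothing but the Schwarz genus (sectional category) of the end-points fibration $\overline{p}\colon P(X)\to X\times X$. First I would recall the general Schwarz-genus machinery: for any fibration $f\colon E\to B$, the genus $\mathrm{genus}(f)$ is the least $n$ such that $B$ can be covered by $n+1$ open sets over each of which $f$ admits a local section, and Schwarz's theorem says this equals the least $n$ for which the $(n+1)$-fold fiberwise join $f_n\colon E_n\to B$ of $f$ with itself admits a (global) section. I would then observe two things: (a) a local section of $\overline{p}$ over an open $U\subset X\times X$ is precisely a motion planning algorithm over $U$, so by definition $\TC(X)=\mathrm{genus}(\overline{p})$; and (b) the $(n+1)$-fold fiberwise join of $\overline{p}$ along itself is, by construction, exactly $\bar p_n^X\colon \Delta_n(X)\to X\times X$. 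Combining (a), (b), and Schwarz's join criterion immediately yields the claimed equivalence.

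In more detail, the key steps in order are: (1) State the fiberwise join construction for a fibration $f\colon E\to B$ and note that the fiberwise join of fibrations is again a fibration over $B$ (this is standard; alternatively one invokes that $\overline{p}$ is a Hurewicz fibration because $P(X)\to X\times X$ is, so its joins are too). (2) Cite Schwarz~\cite{Sch}: $\mathrm{genus}(f)\le n$ iff $f_n$ admits a section. (3) Unwind the definition of $\Delta_n(X)$ given just above and check it coincides with the $(n+1)$-fold fiberwise join $E_n$ of $E=P(X)$ along $\overline p$, with $\bar p_n^X$ the induced projection --- this is essentially a matter of matching notation, since both are defined as formal convex combinations $\sum\lambda_i\phi_i$ of paths lying in a common fiber $\overline p^{-1}(x,y)$. (4) Identify ``motion planning algorithm over $U$'' with ``local section of $\overline p$ over $U$'': a map $s\colon U\to P(X)$ with $s(x,y)(0)=x$, $s(x,y)(1)=y$ is exactly a map with $\overline p\circ s=\mathrm{incl}_U$. (5) Conclude: $\TC(X)\le n \iff X\times X$ is covered by $n+1$ open sets admitting local sections of $\overline p \iff \mathrm{genus}(\overline p)\le n \iff \bar p_n^X$ admits a section.

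The only genuine content is step (2), which is Schwarz's theorem and is cited rather than proved; everything else is definitional bookkeeping. So the ``main obstacle'' is not a mathematical difficulty but rather making sure the fiberwise-join construction used in the definition of $\Delta_n(X)$ matches the one in Schwarz's genus theorem on the nose --- in particular that the open-cover-by-local-sections formulation and the join-section formulation are connected through the same fibration $\overline p$. One should also remark that the argument is formally identical to the proof of Theorem~\ref{original}, the only change being the replacement of the based-path evaluation fibration $p\colon P_0(X)\to X$ by the free-path end-points fibration $\overline p\colon P(X)\to X\times X$; indeed $\cat(X)=\mathrm{genus}(p)$ and $\TC(X)=\mathrm{genus}(\overline p)$, and Theorems~\ref{original} and~\ref{original2} are the two corresponding instances of Schwarz's criterion. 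I would therefore present the proof as: ``This is Schwarz's genus theorem~\cite{Sch} applied to the fibration $\overline p$, together with the observation that local sections of $\overline p$ are motion planning algorithms and that $\Delta_n(X)$ is the $(n+1)$-fold fiberwise join of $P(X)$ along $\overline p$.''
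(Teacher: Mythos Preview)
Your proposal is correct and matches the paper's treatment: the paper does not give a proof of Theorem~\ref{original2} at all but simply attributes it to Schwarz~\cite{Sch}, and your argument is precisely the unpacking of that citation---identifying $\TC(X)$ with the Schwarz genus of $\overline p$ and $\Delta_n(X)$ with the $(n+1)$-fold fiberwise join, then invoking Schwarz's join criterion.
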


\subsection{Lower bounds} The most useful lower bounds for $\cat(X)$ and $\TC(X)$ are given in terms of the cohomology of $X$ and $X\times X$, respectively. The cup-length of $X$ with coefficients in a ring $R$ is the maximal length $k$ of a non-zero product 
$$
\alpha_1\smile\alpha_2\smile\dots\smile\alpha_k\ne 0
$$
of cohomology classes of dimension $>0$. One can use the generalized cup product in the sense of ~\cite{Br}, still the cup-length will be a lower bound for $\cat(X)$~\cite{CLOT}.

In the case of $\TC$, the lower bound is given by the cup-length of zero-divisor cohomology classes of $H^*(X\times X;R)$~\cite{Far1}. We recall that a cohomology class in
$H^*(X\times X;R)$ is a zero-divisor if its restriction to the diagonal is zero.

\section{New Numerical Invariants}\label{New Numerical Invariants}
\subsection{Topology on $\mathcal B_n(Z)$}
Let $Z$ be a metric space. We denote by $C^\epsilon$ an open $\epsilon$-neighborhood of a subset $C\subset Z$. 
Let $\mu$ and $\nu$ be two measures in $\mathcal B_n(Z)$. Then by definition of the Lévy-Prokhorov metric~\cite{P}, 
$$\rho_Z(\mu,\nu)=\max\left\{\rho^\ell(\mu,\nu),\rho^r(\mu,\nu)\right\},$$ where
$$\rho^\ell(\mu,\nu)=\inf_C\{\epsilon>0\mid\mu(C)<\nu(C^\epsilon)+\epsilon\}$$ and
$$\rho^r(\mu,\nu)=\inf_C\{\epsilon>0\mid \nu(C)<\mu(C^\epsilon)+\epsilon\}.$$
Here, the infimum is taken over all measurable sets $C$. We note that only the sets $C\subset\supp\mu$ help to determine $\rho^\ell(\mu,\nu)$ and similarly, only the sets $C\subset\supp\nu$
help to determine $\rho^r(\mu,\nu)$.

When $Z$ is compact, the Lévy-Prokhorov metric defines the weak* topology. The same does the Wasserstein metric, but the Lévy-Prokhorov metric gives a better description of a basis at a point $\mu=\sum\lambda_zz$ of $\mathcal B_n(Z)$. Let $\delta=\min\{d(x,y)\mid x,y\in \supp\mu, \hspace{1mm} x\ne y\}$. Then for $\epsilon<\delta/2$, the open set $U(\mu,\epsilon)$, defined below as 
$$
U(\mu,\epsilon)=\left\{\sum\lambda'_xx \hspace{1mm} \middle| \hspace{1mm} \lambda_z<\sum_{x\in B(z,\epsilon)}\lambda'_x+\frac{\epsilon}{n}\right\}\subset B_\rho(\mu,\epsilon),
$$
lies in the open ball $B_\rho(\mu,\epsilon)$. The sets $U(\mu,\epsilon)$ form a basis of topology at $\mu$ in $\mathcal B_n(Z)$.

We consider the case when $Z=P(X)$ is the space of paths in a metric space $X$. We note that there is a continuous map $$\Phi:\mathcal B_n(P(X))\to P(\mathcal B_n(X))$$
defined as $\Phi(\mu)(t)=(\Phi(\sum\lambda_\phi\phi))(t)=\sum\lambda_\phi\phi(t)$.

\subsection{Distributional LS-category}
A $k$-distributed homotopy in $X$ between maps $f,g:Y\to X$ is a continuous map $H:Y\to \mathcal B_k(P(X))$ satisfying $H(y)\in\mathcal B_k(P(f(y),g(y)))$ for all $y \in Y$. A space $X$ is called {\em $k$-contractible} if there is a $k$-distributed homotopy between the identity map $1_X$ and a constant map $c:X\to x_0\in X$.
Clearly, a 1-distributed homotopy is just a homotopy and a 1-contractible space is just a contractible space.
\begin{defn}
{\em The distributional LS-category}, $d\cat(X)$, of a space $X$ is the minimal number $k$ such that $X$ is $(k+1)$-contractible.
\end{defn}

\begin{prop}\label{h.i.cat}
$d\cat$ is a homotopy invariant.
\end{prop}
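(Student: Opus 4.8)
The plan is to show that a $k$-distributed contraction can be transported along a homotopy equivalence, so that $k$-contractibility is invariant under homotopy equivalence; since $d\cat(X)$ is defined via the least $k$ for which $X$ is $(k+1)$-contractible, this yields the claim. So suppose $f\colon X\to Y$ and $g\colon Y\to X$ are homotopy inverses, with $gf\simeq 1_X$ and $fg\simeq 1_Y$. First I would record the elementary operations on distributed paths that we need: (i) \emph{pushforward} — a map $h\colon X\to X'$ induces $h_*\colon\mathcal B_k(P(X))\to\mathcal B_k(P(X'))$ by composing each path with $h$ (this is continuous, being induced from $\phi\mapsto h\circ\phi$ on $P(X)$); (ii) \emph{concatenation with a fixed homotopy} — given a homotopy $F\colon X\times[0,1]\to X$ from $a$ to $b$, one can prepend or append the path $t\mapsto F(x,t)$ to every path in a distributed path $H(x)$, weight by weight, continuously in $x$; and (iii) concatenation of two distributed paths $H_1(y)\in\mathcal B_k(P(a(y),b(y)))$ and $H_2(y)\in\mathcal B_k(P(b(y),c(y)))$ into one in $\mathcal B_k(P(a(y),c(y)))$ — here the natural thing is to pair up the $i$-th path of $H_1$ with the $i$-th path of $H_2$ and concatenate, keeping a common weight, which keeps the support size $\le k$; for this to be well-defined on unordered collections one uses a choice of ordered representative, or better, applies concatenation at the level of ordered formal sums and then passes to the quotient. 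These are all routine once stated.

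Next, assume $d\cat(Y)=k$, so $Y$ is $(k+1)$-contractible: there is $H\colon Y\to\mathcal B_{k+1}(P(Y))$ with $H(y)\in\mathcal B_{k+1}(P(y,y_0))$ for a basepoint $y_0$. I would build a $(k+1)$-distributed contraction of $X$ as follows. Start at $x\in X$; travel along the fixed homotopy $gf\simeq 1_X$ from $x$ to $gf(x)$ (operation (ii), a single path, i.e.\ weight one); then apply $g_*$ to the distributed contraction $H$ evaluated at $f(x)$, giving a $(k+1)$-distributed path in $X$ from $gf(x)$ to $g(y_0)$ (operations (i) and evaluation of $H$); concatenate these (operation (iii), trivial on the first factor since it is a single path). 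The result is a continuous map $X\to\mathcal B_{k+1}(P(X))$ sending $x$ to a distributed path from $x$ to the constant $g(y_0)$, i.e.\ a $(k+1)$-distributed homotopy from $1_X$ to the constant map at $g(y_0)$. Hence $X$ is $(k+1)$-contractible and $d\cat(X)\le k=d\cat(Y)$. By symmetry (swapping the roles of $X$ and $Y$, using $f$ and $fg\simeq 1_Y$) we get $d\cat(Y)\le d\cat(X)$, so equality holds.

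The one genuinely delicate point — and the step I expect to be the main obstacle — is the continuity and well-definedness of the concatenation and pushforward operations on $\mathcal B_n(P(X))$, because the support of a measure can drop in size in the limit (weights going to $0$) and because elements are \emph{unordered} formal sums. I would handle this by working with the description of the basic neighborhoods $U(\mu,\epsilon)$ recalled in the excerpt: pushforward by a continuous $h$ is continuous because $h$ is uniformly continuous on compact pieces and maps $\epsilon$-balls into $\epsilon'$-balls, so it carries $U(\mu,\epsilon)$ into a prescribed $U(h_*\mu,\epsilon')$; concatenation is continuous for the analogous reason, together with the standard fact that concatenation $P(a,b)\times P(b,c)\to P(a,c)$ is continuous in the compact-open topology. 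The collapsing-support issue is exactly what the definition of $\mathcal B_n$ (``we allow weights to be zero,'' and ``all terms where $\lambda_i=0$ are dropped'') is set up to absorb, so matching weights across the concatenation keeps everything inside $\mathcal B_{k+1}$. Since these are the same formal manipulations that underlie the classical homotopy invariance of $\cat$, I would present them briefly and refer to the basis $\{U(\mu,\epsilon)\}$ for the continuity checks rather than spelling out every estimate.
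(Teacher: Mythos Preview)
Your argument is correct and matches the paper's proof: push forward the distributed contraction along one map and prepend the track $h_y$ of the homotopy to each path in the support, exactly as in the paper's formula $F(y)=\sum\lambda_\phi(h_y\cdot f\phi)$. The only differences are cosmetic---the paper states the one-sided inequality under homotopy \emph{domination} rather than equivalence, and your general operation (iii) is never actually needed (and is ill-posed for unordered sums with possibly different weights) since in the construction you only concatenate a \emph{single} path with a distributed one.
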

\begin{proof}
We show that a homotopy domination $f:X\to Y$ implies the inequality $d\cat(X) \ge d\cat(Y)$. Let $g:Y\to X$ be the right inverse, i.e., $f g\sim 1_Y$.
Let $H:Y\times I\to Y$ be a homotopy between $1_Y$ and $fg$.
Let $d\cat(X) = k-1$ and $G:X\to\mathcal B_k(P_0(X))$ be a $k$-distributed contraction. Let us choose the base point $y_0=f(x_0)$. If $f_{*}: \mathcal{B}_{k}(P_{0}(X)) \to \mathcal{B}_{k}(P_{0}(Y))$ is induced by $f$ due to the functoriality of $\mathcal{B}_{k}$, then $$f_*Gg:Y\to\mathcal B_k(P_{0}(Y))$$ is a $k$-distributed homotopy between $fg$ and the constant map to $y_0$. We amend it by means of the homotopy $H$ as follows.
If $G(g(y))=\sum\lambda_\phi\phi$, we set $$F(y)=\sum\lambda_\phi (h_y\cdot f\phi)$$ where $h_y(t)=H(y,t)$ is the path from $y$ to $fg(y)$ and $h_y\cdot f\phi$ is the concatenation.
Then $$F:Y\to B_k(P_0(Y))$$ is a $k$-distributed contraction of $Y$ to $y_0$.
\end{proof}

\begin{prop}\label{cat}
$d\cat(X)\le\cat(X)$.
\end{prop}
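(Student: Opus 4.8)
The plan is to show that an ordinary LS-categorical open cover of $X$ yields a $(\cat(X)+1)$-distributed contraction, so that $d\cat(X) \le \cat(X)$. The key point is that a genuine (non-distributed) categorical structure is a very special case of a distributed one: it is a distribution supported on a single path. Concretely, suppose $\cat(X) = k$, so there is an open cover $U_0, \dots, U_k$ of $X$ with each $U_i$ contractible in $X$; let $h_i : U_i \times I \to X$ be a contraction of $U_i$ to the base point $x_0$ (after first connecting $x_0$ to each chosen point of contraction by a fixed path, we may assume all contractions end at the common base point). My plan is to feed these into the Ganea--Schwarz framework of \theoref{original}, or equivalently to produce directly a map $G : X \to \mathcal B_{k+1}(P_0(X))$ with $G(x) \in \mathcal B_{k+1}(P_0(x,x_0))$.

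First I would choose a partition of unity $\{\psi_i\}_{i=0}^{k}$ subordinate to the cover $\{U_i\}$. Then for $x \in X$ I would set
$$
G(x) = \sum_{i \,:\, \psi_i(x) > 0} \psi_i(x)\, \gamma_i^x,
$$
where $\gamma_i^x \in P_0(X)$ is the path $t \mapsto h_i(x, t)$ from $x$ to $x_0$, defined whenever $\psi_i(x) > 0$ (which forces $x \in U_i$, so $\gamma_i^x$ makes sense). Since at most $k+1$ of the weights $\psi_i(x)$ are nonzero, $G(x)$ indeed lies in $\mathcal B_{k+1}(P_0(x,x_0))$. This $G$ is exactly a $(k+1)$-distributed homotopy between $1_X$ and the constant map $c : X \to x_0$, so $X$ is $(k+1)$-contractible and hence $d\cat(X) \le k = \cat(X)$.

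The main obstacle is continuity of $G$ as a map into $\mathcal B_{k+1}(P_0(X))$ with the Lévy--Prokhorov topology, and here one must be slightly careful precisely at points $x$ where some $\psi_i(x) = 0$: such a term is dropped from the formal sum, so the number of points in the support can jump, and one has to check that $G$ is still continuous across that locus. This is where the basis $U(\mu,\epsilon)$ described in the paper does the work: when $\psi_i(x)$ is small, the nearby path $\gamma_i^{x'}$ carries only a small weight, which is absorbed into the $\epsilon/n$ slack in the definition of $U(\mu,\epsilon)$, so the support-size discontinuity is harmless for the metric. The remaining verifications --- that $x' \mapsto \gamma_i^{x'}$ is continuous into $P(X)$ for $x'$ near $x$ with $\psi_i(x') > 0$ (immediate from continuity of $h_i$ and the exponential law), and that $x' \mapsto \psi_i(x')$ is continuous --- are routine, and together they give continuity of $G$. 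I should also record the reduction that allowed me to assume all contractions terminate at the single base point $x_0$: replace each $h_i$ by the concatenation of $h_i$ with a fixed path from its terminal point to $x_0$, which only reparametrizes and does not change the count.
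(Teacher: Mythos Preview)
Your argument is correct and is essentially the paper's proof with one layer unpacked: the paper invokes \theoref{original} to obtain a section $s : X \to G_n(X)$ of the Ganea fibration and then simply forgets the ordering on the formal sum $s(x) = \sum_i \lambda_i \phi_i$ to land in $\mathcal B_{n+1}(P_0(X))$. Your partition-of-unity formula is precisely how such a section is constructed in the first place, so you have reproduced that step directly; the continuity discussion you give is, in the paper's version, absorbed into the continuity of $s$ together with that of the obvious forgetful map $G_n(X) \to \mathcal B_{n+1}(P_0(X))$.
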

\begin{proof}
Suppose that $\cat(X) = n$. Then by Theorem~\ref{original}, there is a section $s:X\to G_n(X)$ of the Ganea-Schwarz fibration $p_n$. Note that $s(x)=\sum_{i=1}^{n+1}\lambda_i\phi_i$
is the ordered sum where each $\phi_i$ is a path from $x$ to $x_0$. If we forget about the order, we obtain an $(n+1)$-distributed path from $x$ to $x_0$. Thus the section $s$ defines an $(n+1)$-distributed contraction of $X$ to $x_0$.
\end{proof}
We recall that for classical $\cat$ (as well as for $\TC$), there is the inequality
$$
\cat(X\times Y)\le\cat(X)+\cat(Y).$$
\begin{question}
Does the inequality $d\cat(X\times Y)\le d\cat(X)+d\cat(Y)$ hold true?
\end{question}
This looks like a difficult question. What is easy to prove is the following.
\begin{prop}
$d\cat(X\times Y)\le(d\cat(X)+1)(d\cat(Y)+1)-1$.
\end{prop}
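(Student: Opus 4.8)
The plan is to mimic the classical product inequality for LS-category, using distributed contractions in each factor and combining them via the "fiberwise join of products" picture. Suppose $d\cat(X)=p$ and $d\cat(Y)=q$, so $X$ is $(p+1)$-contractible and $Y$ is $(q+1)$-contractible. Let $G:X\to\mathcal B_{p+1}(P_0(X))$ and $H:Y\to\mathcal B_{q+1}(P_0(Y))$ be $(p+1)$- and $(q+1)$-distributed contractions to basepoints $x_0$ and $y_0$, respectively. The natural base point of $X\times Y$ is $(x_0,y_0)$.

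First I would observe that $\mathcal B_n$ enjoys a monoidal-type property: there is a continuous map $\mathcal B_a(A)\times\mathcal B_b(B)\to\mathcal B_{ab}(A\times B)$ sending a pair $(\sum\lambda_\alpha\alpha,\sum\mu_\beta\beta)$ to the product measure $\sum_{\alpha,\beta}\lambda_\alpha\mu_\beta(\alpha,\beta)$, which has support of size at most $ab$. Applying this with $A=P_0(X)$, $B=P_0(Y)$, $a=p+1$, $b=q+1$, and using the canonical identification $P_0(X)\times P_0(Y)=P_0(X\times Y)$ (a based path in $X\times Y$ is a pair of based paths), we obtain from $G$ and $H$ a continuous map
$$
X\times Y\;\xrightarrow{\;G\times H\;}\;\mathcal B_{p+1}(P_0(X))\times\mathcal B_{q+1}(P_0(Y))\;\longrightarrow\;\mathcal B_{(p+1)(q+1)}\big(P_0(X\times Y)\big).
$$
Then I would check that this composite is a $(p+1)(q+1)$-distributed homotopy from the identity of $X\times Y$ to the constant map at $(x_0,y_0)$: at a point $(x,y)$, each path in the support is of the form $(\phi,\psi)$ with $\phi$ a path from $x$ to $x_0$ and $\psi$ a path from $y$ to $y_0$, hence a path in $X\times Y$ from $(x,y)$ to $(x_0,y_0)$; the weights are products $\lambda_\phi\mu_\psi$, which are non-negative and sum to $1$. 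This shows $X\times Y$ is $(p+1)(q+1)$-contractible, i.e. $d\cat(X\times Y)\le(p+1)(q+1)-1=(d\cat(X)+1)(d\cat(Y)+1)-1$.

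The only point requiring genuine care — and the step I expect to be the main obstacle — is the \emph{continuity} of the product-measure map $\mathcal B_a(A)\times\mathcal B_b(B)\to\mathcal B_{ab}(A\times B)$ with respect to the Lévy--Prokhorov metrics. Here I would use the basis $U(\mu,\epsilon)$ described in the excerpt: given a target basic neighborhood of a product measure $\mu\times\nu$, one chooses $\epsilon'$ small enough (in terms of $\epsilon$, $a$, $b$, and the minimal separations of $\supp\mu$ and $\supp\nu$) so that $U(\mu,\epsilon')\times U(\nu,\epsilon')$ maps inside it; the estimate $(\lambda'+\tfrac{\epsilon'}{a})(\mu'+\tfrac{\epsilon'}{b})\le \lambda'\mu'+O(\epsilon')$ on the relevant product weights does the job, using that product balls $B(\alpha,\epsilon')\times B(\beta,\epsilon')$ sit inside the ball of radius $\epsilon'$ around $(\alpha,\beta)$ in $A\times B$. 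The continuity of concatenation and of the identification $P_0(X)\times P_0(Y)\cong P_0(X\times Y)$ is routine for compact-open topologies. This completes the argument.
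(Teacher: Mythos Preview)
Your argument is correct and is precisely the natural approach. Note, however, that the paper does \emph{not} actually give a proof of this proposition: it is stated immediately after the remark ``What is easy to prove is the following'' and left to the reader. Your construction---taking the product measure $G(x)\otimes H(y)\in\mathcal B_{(p+1)(q+1)}(P_0(X)\times P_0(Y))\cong\mathcal B_{(p+1)(q+1)}(P_0(X\times Y))$---is exactly the obvious one the authors have in mind, and your verification that it gives a $(p+1)(q+1)$-distributed contraction of $X\times Y$ to $(x_0,y_0)$ is fine.

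One minor remark on the continuity step you flag: the claim that $B(\alpha,\epsilon')\times B(\beta,\epsilon')\subset B((\alpha,\beta),\epsilon')$ holds only if $A\times B$ carries the sup metric; with the $\ell^2$ product metric you would need a factor of $\sqrt{2}$. This is harmless---either choose the sup metric from the start, or absorb the constant into your choice of $\epsilon'$. Alternatively, you can sidestep the explicit estimate entirely by quoting the standard fact that the product of Borel probability measures is jointly continuous for the weak topology, which the L\'evy--Prokhorov metric metrizes.
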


The following proposition is straightforward:

\begin{prop}
$d\cat(X\vee Y)=\max\{d\cat(X), d\cat(Y)\}$.
\end{prop}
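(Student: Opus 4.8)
The plan is to prove both inequalities $d\cat(X\vee Y)\le\max\{d\cat(X),d\cat(Y)\}$ and $d\cat(X\vee Y)\ge\max\{d\cat(X),d\cat(Y)\}$ separately, where we take the base point of $X\vee Y$ to be the wedge point $x_0=y_0$. The inequality $\ge$ is the easy direction: the inclusions $X\hookrightarrow X\vee Y$ and $Y\hookrightarrow X\vee Y$ admit the obvious retractions (collapsing the other wedge summand), hence $X$ and $Y$ are both homotopy retracts — in particular homotopy dominated — by $X\vee Y$, so \propref{h.i.cat} gives $d\cat(X\vee Y)\ge d\cat(X)$ and $d\cat(X\vee Y)\ge d\cat(Y)$, whence $d\cat(X\vee Y)\ge\max\{d\cat(X),d\cat(Y)\}$.

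For the reverse inequality, set $k=\max\{d\cat(X),d\cat(Y)\}$, so both $X$ and $Y$ are $(k+1)$-contractible. First I would replace the given $(k+1)$-distributed contractions $G_X:X\to\mathcal B_{k+1}(P_0(X))$ and $G_Y:Y\to\mathcal B_{k+1}(P_0(Y))$ by contractions that fix the base point, i.e.\ with $G_X(x_0)=\delta_{c_{x_0}}$ the Dirac measure at the constant path; this is a standard adjustment (precompose with a homotopy that slides the contraction so it is stationary at $x_0$, exactly as in the proof of \propref{h.i.cat}) and it does not increase the number of pieces. Composing with the inclusions $i_X:X\hookrightarrow X\vee Y$ and $i_Y:Y\hookrightarrow X\vee Y$ and with the induced maps on $\mathcal B_{k+1}(P_0(-))$, we obtain based $(k+1)$-distributed contractions of each summand inside $X\vee Y$. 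Then I would define $G:X\vee Y\to\mathcal B_{k+1}(P_0(X\vee Y))$ by $G|_X=(i_X)_*G_X$ and $G|_Y=(i_Y)_*G_Y$; this is well defined because on the overlap $\{x_0\}=\{y_0\}$ both formulas return the Dirac measure at the constant path at the base point, and it is continuous by the pasting lemma (the two pieces are closed and agree on the intersection). By construction $G$ is a $(k+1)$-distributed contraction of $X\vee Y$, so $d\cat(X\vee Y)\le k$, completing the proof.

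The only point requiring care — and the step I expect to be the mild obstacle — is the base-point adjustment making the contractions stationary at $x_0$, together with verifying that the pasted map $G$ is genuinely continuous as a map into $\mathcal B_{k+1}(P_0(X\vee Y))$ with the Lévy--Prokhorov topology. Once the two contractions agree (as the constant Dirac measure) on the wedge point, continuity of the paste is formal; and the functoriality of $\mathcal B_{k+1}$ under the inclusions $i_X,i_Y$, already used in \propref{h.i.cat}, handles the rest. I would keep this argument brief, citing the technique of \propref{h.i.cat} for the stationary-at-base-point reduction rather than repeating it in full.
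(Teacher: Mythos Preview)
The paper offers no proof of this proposition, declaring it ``straightforward,'' so there is nothing to compare against directly. Your outline is the natural argument and is correct in outline; the retraction argument for $\ge$ is exactly right.

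The one point that deserves more care than you give it is the base-point adjustment. The concatenation trick from \propref{h.i.cat} that you cite prepends every path in a distributed path by a \emph{single} ordinary path $h_y$; this cannot force $G_X(x_0)=\delta_{c_{x_0}}$, because the loops in $\supp G_X(x_0)$ may represent distinct elements of $\pi_1(X)$ and no one concatenation kills them all without enlarging the support. The clean way to do the adjustment is this: $\mathcal B_{k+1}(p_0):P_0(X)_{k+1}\to X$ is a Hurewicz fibration (proved in \secref{Characterization}), with fiber $\mathcal B_{k+1}(\Omega X)$. For $k+1\ge 2$ this fiber is path-connected --- shift weights to pass from any measure to a Dirac $\delta_\phi$, and connect two Diracs $\delta_\phi,\delta_\psi$ through $\tfrac12\delta_\phi+\tfrac12\delta_\psi$; for $k=0$ the hypothesis $d\cat(X)=0$ makes $X$ contractible, so $\Omega X$ is already connected. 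Since $(X,x_0)$ is a cofibration pair in an ANR, the homotopy extension property now lets you deform the section $G_X$ through sections to one with $G_X(x_0)=\delta_{c_{x_0}}$, without increasing support size. With that in hand, your gluing-by-pasting argument goes through verbatim.
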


\begin{thm}\label{cover}
Let $p:X\to Y$ be a covering map and $X$ be path-connected. Then $d\cat(X)\le d\cat(Y)$.
\end{thm}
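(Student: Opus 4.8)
The plan is to lift a distributional contraction of $Y$ to one of $X$ using the path-lifting property of the covering map $p : X \to Y$. Fix base points $x_0 \in X$ and $y_0 = p(x_0) \in Y$, and suppose $d\cat(Y) = k-1$, so there is a $k$-distributed contraction $G : Y \to \mathcal{B}_k(P_0(Y))$ with $G(y) \in \mathcal{B}_k(P(y, y_0))$ for all $y$. For each $x \in X$, write $G(p(x)) = \sum \lambda_\phi \phi$, a formal sum of paths in $Y$ from $p(x)$ to $y_0$. I would then lift each such path $\phi$ to a path $\widetilde{\phi}$ in $X$ starting at $x$, using the unique path lifting property of coverings, and set $F(x) = \sum \lambda_\phi \widetilde{\phi}$. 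The weights are unchanged, so $F(x) \in \mathcal{B}_k(P(X))$; the issue is whether all the lifted paths share a common endpoint so that $F$ is actually a $k$-distributed homotopy from $1_X$ to a constant map.

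The key technical point is that the endpoints of the lifts need not agree in general, so one must argue they do here — or modify the construction. First I would observe that since $G$ is continuous and $Y$ is path-connected, the endpoint $y_0$ is common to all terms of $G(y)$ for every $y$; the lifts $\widetilde{\phi}$ starting at a fixed $x$ all end at points of the fiber $p^{-1}(y_0)$. To pin down a single endpoint, I would use continuity of $F$ together with connectedness: the map $x \mapsto (\text{endpoints of all lifted paths})$ is continuous into the discrete fiber $p^{-1}(y_0)$ (once one checks $F$ is continuous), hence locally constant, hence constant on path-connected $X$; evaluating at $x = x_0$, where the constant path lifts to the constant path at $x_0$, shows the common endpoint is $x_0$. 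This requires care because a priori the support of $G(y)$ can have fewer than $k$ points and the lifted support can vary; the argument should really be run componentwise on the natural ordered/labelled model, or phrased via the continuity of the evaluation $\mathcal{B}_k(P(X)) \to \mathcal{B}_k(X)$ at $t = 1$ composed with $F$, landing in $\mathcal{B}_k(p^{-1}(y_0))$, a discrete space, so it is constant.

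The continuity of $F$ itself is the second step to verify. I would establish it using the path-lifting correspondence: the map $P(Y) \times X \supset \{(\phi, x) : \phi(0) = p(x)\} \to P(X)$ sending $(\phi, x)$ to the lift of $\phi$ starting at $x$ is continuous (a standard fact about covering spaces, uniform on compacta), and this induces a continuous map on the level of $\mathcal{B}_k$ by functoriality of $\mathcal{B}_k$ applied fiberwise over $p$. Concretely, $F$ is the composite of $x \mapsto (G(p(x)), x)$ with this lifting map, promoted to measures. Checking continuity in the Lévy–Prokhorov metric can be done on the basic neighborhoods $U(\mu, \epsilon)$ described in Section~\ref{New Numerical Invariants}, using that nearby points $x, x'$ have $p(x), p(x')$ close, hence $G(p(x)), G(p(x'))$ close, hence their lifts from nearby starting points close on $[0,1]$.

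The main obstacle I anticipate is precisely the endpoint-agreement issue: making rigorous that the lifted distributed path is genuinely a contraction to a single point and not merely a distributed homotopy to a measure spread over the fiber $p^{-1}(y_0)$. Everything else — functoriality of $\mathcal{B}_k$, continuity of path lifting, invariance of weights — is routine, but the connectedness/local-constancy argument that forces a uniform endpoint $x_0$ is where one must be careful, especially in bookkeeping the case of degenerate (zero-weight) terms and supports of size less than $k$. Once that is handled, $F : X \to \mathcal{B}_k(P_0(X))$ is the required $k$-distributed contraction, giving $d\cat(X) \le k - 1 = d\cat(Y)$.
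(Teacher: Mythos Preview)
Your lifting construction and continuity check are fine, but the endpoint-agreement argument has a genuine gap. The claim that $\mathcal{B}_k(p^{-1}(y_0))$ is discrete is false: even though the fiber $p^{-1}(y_0)$ is discrete, the space $\mathcal{B}_k(p^{-1}(y_0))$ allows the \emph{weights} to vary continuously, so for $k\ge 2$ it contains nontrivial arcs such as $t\mapsto (1-t)\,\delta_a + t\,\delta_b$. Hence the continuous map $x\mapsto (\text{ev}_1)_*F(x)\in\mathcal{B}_k(p^{-1}(y_0))$ need not be locally constant and your connectedness argument does not force a single endpoint. Concretely, take $p:\R\to S^1$, $p(x)=e^{2\pi i x}$, with the $2$-distributed contraction of $S^1$ built exactly as in Example~\ref{rpn} (two rotations with complementary weights). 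For $x\in(0,1)$ the two paths in $G(p(x))$ lift from $x$ to paths ending at $0$ and at $1$ respectively, so $F(x)$ has support with two distinct endpoints and $(\text{ev}_1)_*F(x)=(1-x)\,\delta_0+x\,\delta_1$ moves continuously across $\mathcal{B}_2(\Z)$ without ever being a Dirac measure.

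The paper does not try to argue that the lifts end at a common point; instead it \emph{forces} them to. One fixes, once and for all, a path $\gamma_z$ from each $z\in p^{-1}(y_0)$ to $x_0$, lifts each $\phi$ to $\bar\phi$ starting at $x$, and replaces $\bar\phi$ by the concatenation $\hat\phi=\bar\phi\cdot\gamma_{\bar\phi(1)}$. Then every $\hat\phi$ ends at $x_0$ by construction, and $G(x)=\sum\lambda_\phi\hat\phi$ is a genuine $k$-distributed contraction of $X$. Continuity survives because $\bar\phi(1)$ lies in a discrete set and is determined by $(\phi,x)$, so locally the choice of $\gamma_{\bar\phi(1)}$ does not jump (and when a term's weight tends to zero its contribution disappears regardless). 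Your write-up is one sentence away from this: replace the discreteness argument by the concatenation trick and the proof goes through.
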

\begin{proof}
Let $y_0\in Y$ be a base point and let us fix some $x_{0} \in p^{-1}(y_{0})$ as the basepoint of $X$. For each $x\in p^{-1}(y_0)$, we fix a path $\gamma_x$ from $x$ to $x_0$.
Let $d\cat(Y) = k-1$ and $H:Y\to\mathcal B_k(Y)$ be a $k$-distributed contraction of $Y$ to $y_0$. We define a $k$-distributed contraction $G$ of $X$ to $x_0$ as follows. Given $x\in X$, let $y=p(x)$ and let
$H(y)=\sum_{\phi\in\supp H(y)}\lambda_\phi\phi$ where each $\phi$ is a path in $Y$ from $y$ to $y_0$. Let $\bar\phi$ denote the lift of $\phi$ beginning at $x$. Define $\hat\phi=\bar\phi\cdot\gamma_{\bar\phi(1)}$ to be the concatenation of $\bar\phi$ and $\gamma_{\bar\phi(1)}$. Note that $\hat\phi$ is a path in $X$ from $x$ to $x_0$. We define $G:X\to\mathcal B_k(P_{0}(X))$ by the formula
$$G(x)=\sum_{\phi\in\supp H(y)}\lambda_\phi\hat\phi.$$
\end{proof}

\subsection{Distributional Topological Complexity}
We recall that a {\em $k$-distributed navigation algorithm} on a space $X$ is a continuous map $$s:X\times X\to \mathcal B_k(P(X))$$ satisfying $s(x,y)\in \mathcal B_k(P(x,y))$ for all $(x,y) \in X \times X$.

\begin{defn} {\em The distributional topological complexity}, $\dTC(X)$, of a space $X$ is the minimal number $n$ such that $X$ admits an $(n+1)$-distributed navigation algorithm.
\end{defn}

\begin{prop}\label{hominvdTC}
$\dTC$ is a homotopy invariant.
\end{prop}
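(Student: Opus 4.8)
The plan is to mimic the proof of Proposition~\ref{h.i.cat} but in the two-variable setting, replacing the one-sided contraction correction with corrections on both the initial and terminal ends of the distributed paths. Concretely, I would show that a homotopy domination $f\colon X\to Y$ (with right inverse $g\colon Y\to X$, so $fg\sim 1_Y$) implies $\dTC(X)\ge\dTC(Y)$; since a homotopy equivalence gives dominations in both directions, homotopy invariance follows.

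First I would fix a homotopy $H\colon Y\times I\to Y$ between $1_Y$ and $fg$, writing $h_y(t)=H(y,t)$ for the path from $y$ to $fg(y)$. Suppose $\dTC(X)=k-1$ and let $s\colon X\times X\to\mathcal B_k(P(X))$ be a $k$-distributed navigation algorithm, so $s(x,x')\in\mathcal B_k(P(x,x'))$. Using the functoriality of $\mathcal B_k$ and the induced map on path spaces, $f_*\colon\mathcal B_k(P(X))\to\mathcal B_k(P(Y))$, the composite $(y,y')\mapsto f_*\,s(g(y),g(y'))$ is a continuous map $Y\times Y\to\mathcal B_k(P(Y))$ landing in $\mathcal B_k(P(fg(y),fg(y')))$. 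Then I would correct the endpoints: if $s(g(y),g(y'))=\sum_\phi\lambda_\phi\phi$, set
$$
s'(y,y')=\sum_\phi\lambda_\phi\bigl(h_y\cdot (f\phi)\cdot \overline{h_{y'}}\bigr),
$$
where $\overline{h_{y'}}$ is the reverse path of $h_{y'}$ (running from $fg(y')$ back to $y'$) and the product is concatenation. Each summand is a path in $Y$ from $y$ to $y'$, and the weights are unchanged, so $s'(y,y')\in\mathcal B_k(P(y,y'))$, giving a $k$-distributed navigation algorithm on $Y$ and hence $\dTC(Y)\le k-1=\dTC(X)$.

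The main thing to check carefully—and the only real obstacle—is continuity of $s'$ as a map $Y\times Y\to\mathcal B_k(P(Y))$. This decomposes into: continuity of $(y,y')\mapsto s(g(y),g(y'))$ (immediate from continuity of $s$ and $g$), continuity of $f_*$ on $\mathcal B_k(P(X))$ (the functoriality statement already invoked in Proposition~\ref{h.i.cat}), and continuity of the triple-concatenation-with-weights operation $\mathcal B_k(P(Y))\times Y\times Y\to\mathcal B_k(P(Y))$ sending $(\sum\lambda_\phi\phi, y, y')$ to $\sum\lambda_\phi(h_y\cdot\phi\cdot\overline{h_{y'}})$. This last map is continuous because concatenation $P(Y)\times P(Y)\to P(Y)$ is continuous in the compact-open topology, the assignments $y\mapsto h_y$ and $y'\mapsto\overline{h_{y'}}$ are continuous (from continuity of $H$), and prepending/appending a fixed path to each path in the support of a measure in $\mathcal B_k$ is continuous with respect to the Lévy–Prokhorov metric—the weights $\lambda_\phi$ are preserved and the supports move continuously. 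One should also note that the number of summands can only drop (when weights vanish), which is compatible with the topology on $\mathcal B_k$, so the image genuinely lies in $\mathcal B_k(P(Y))$ rather than some larger $\mathcal B_m$. I would present the endpoint correction explicitly and leave these continuity verifications as routine consequences of the properties of the Lévy–Prokhorov metric recorded in Section~\ref{New Numerical Invariants}.
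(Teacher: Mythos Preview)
Your proposal is correct and follows essentially the same approach as the paper: you reduce homotopy invariance to the domination inequality $\dTC(X)\ge\dTC(Y)$, push the $k$-distributed algorithm forward via $f_*s(g\times g)$, and correct both endpoints using the concatenation $h_y\cdot f\phi\cdot\overline{h_{y'}}$, which is exactly the paper's construction. Your additional remarks on continuity are more detailed than what the paper records, but the argument is the same.
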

\begin{proof}
The proof is similar to that of Proposition~\ref{h.i.cat}. Again, we show that a homotopy domination $f:X\to Y$ implies the inequality $\dTC(X)\ge \dTC(Y)$. Let $g:Y\to X$ be the right inverse so that $f g\sim 1_Y$. Let $\dTC(X) = k-1$ and $H:Y\times I\to Y$ be a homotopy between $1_Y$ and $fg$.
Let $$s:X\times X\to\mathcal B_k(P(X))$$ be a $k$-distributed navigation algorithm. 
Then $$f_*s(g\times g):Y\times Y\to\mathcal B_k(P(Y))$$ takes any $(y,y') \in Y \times Y$ to a $k$-distributed path from $fg(y)$ to $fg(y')$. We amend it by means of the homotopy $H$ as follows.
If $s(g(y),g(y'))=\sum\lambda_\phi\phi$, we set $$\sigma(y,y')=\sum\lambda_\phi (h_y\cdot f\phi\cdot \bar h_{y'})$$ where $h_y(t)=H(y,t)$ is a path in $Y$ from $y$ to $fg(y)$ and $\bar h_y$
is its reverse.
Then $$\sigma:Y\times Y\to B_k(P(Y))$$ is a $k$-distributed navigation algorithm on $Y$.
\end{proof}

\begin{prop}\label{TC}
$\dTC(X)\le \TC(X)$.
\end{prop}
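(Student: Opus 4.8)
The plan is to mimic the proof of Proposition~\ref{cat}, now using the Schwarz--Ganea characterization of $\TC$ from Theorem~\ref{original2} in place of the Ganea--Schwarz characterization of $\cat$. Suppose $\TC(X)=n$. Then by Theorem~\ref{original2} the Schwarz--Ganea fibration $\bar p_n^X:\Delta_n(X)\to X\times X$ admits a section $s:X\times X\to\Delta_n(X)$. For each $(x,y)$ the value $s(x,y)=\sum_{i=1}^{n+1}\lambda_i\phi_i$ is an \emph{ordered} formal linear combination of paths $\phi_i\in P(X)$ with $\bar p\phi_i=(x,y)$, i.e.\ each $\phi_i$ runs from $x$ to $y$, with $\lambda_i\ge 0$ and $\sum\lambda_i=1$.

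The key step is to forget the ordering: the natural map $q:\Delta_n(X)\to\mathcal B_{n+1}(P(X))$ sending an ordered combination $\sum\lambda_i\phi_i$ to the measure $\sum\lambda_i\phi_i\in\mathcal B_{n+1}(P(X))$ (collapsing repeated paths and dropping zero-weight terms) is continuous; composing with $s$ gives a continuous map $q\circ s:X\times X\to\mathcal B_{n+1}(P(X))$. Since each $\phi_i$ in $s(x,y)$ lies in $P(x,y)$, the measure $q(s(x,y))$ is supported in $P(x,y)$, so $q\circ s$ is an $(n+1)$-distributed navigation algorithm on $X$. Hence $\dTC(X)\le n=\TC(X)$.

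The only point requiring a little care — and the place I expect to spend the most attention — is the continuity of the ordering-forgetting map $q$ into $\mathcal B_{n+1}(P(X))$ with its L\'evy--Prokhorov topology; in particular one should check that nearby ordered tuples (where some weights may pass through zero or some paths may coincide) map to L\'evy--Prokhorov-close measures, using the basis $U(\mu,\epsilon)$ described in Section~\ref{New Numerical Invariants}. This is essentially the statement that the quotient map from the ordered fiberwise join to the space of $\le(n+1)$-point measures on the fibers is continuous, which is routine but worth noting. Everything else is a direct transcription of the argument in Proposition~\ref{cat}.
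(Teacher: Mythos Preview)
Your proof is correct and follows essentially the same approach as the paper: take a section of the Schwarz--Ganea fibration $\bar p_n^X$ from Theorem~\ref{original2} and forget the ordering to obtain an $(n+1)$-distributed navigation algorithm. The paper's proof is terser and does not comment on the continuity of the order-forgetting map, but the argument is otherwise identical.
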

\begin{proof}
Suppose that $\TC(X) = n$. Then by Theorem~\ref{original2}, there is a section $\bar s:X\times X\to \Delta_n(X)$ of the Ganea-Schwarz fibration $\bar p_n$. Note that $\bar s(x,y)=\sum_{i=1}^{n+1}\lambda_i\phi_i$
is the ordered sum where each $\phi_i$ is a path from $x$ to $y$. If we forget about the order, we obtain an $(n+1)$-distributed path from $x$ to $y$. Thus, the section $\bar s$ defines an $(n+1)$-distributed navigation algorithm on $X$.
\end{proof}

The following two statements are easy to prove.

\begin{prop}\label{inequality}
$d\cat( X)\le \dTC(X)\le d\cat(X\times X)$.
\end{prop}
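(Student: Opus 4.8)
The plan is to mimic the classical argument that $\cat(X)\le\TC(X)\le\cat(X\times X)$, replacing sections and categorical covers by distributed homotopies and using the functoriality of $\mathcal B_k$ together with the continuity of path reversal and concatenation.

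For the left inequality $d\cat(X)\le\dTC(X)$, suppose $\dTC(X)=n$ and let $s:X\times X\to\mathcal B_{n+1}(P(X))$ be an $(n+1)$-distributed navigation algorithm. I would restrict $s$ to the slice $\{x_0\}\times X$ and then reverse paths: writing $s(x_0,x)=\sum_\phi\lambda_\phi\phi$ with each $\phi\in P(x_0,x)$, set $G(x)=\sum_\phi\lambda_\phi\bar\phi$, where $\bar\phi\in P(x,x_0)$ is the reverse path. Since $\phi\mapsto\bar\phi$ is a homeomorphism of $P(X)$, it induces a continuous map $\mathcal B_{n+1}(P(X))\to\mathcal B_{n+1}(P(X))$ by functoriality, so $G:X\to\mathcal B_{n+1}(P_0(X))$ is continuous and $G(x)\in\mathcal B_{n+1}(P(x,x_0))$; that is, $G$ is an $(n+1)$-distributed contraction of $X$ to $x_0$, giving $d\cat(X)\le n=\dTC(X)$.

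For the right inequality $\dTC(X)\le d\cat(X\times X)$, suppose $d\cat(X\times X)=n$ and let $H:X\times X\to\mathcal B_{n+1}(P(X\times X))$ be an $(n+1)$-distributed contraction of $X\times X$ to a point $(a,b)$, so $H(x,y)\in\mathcal B_{n+1}(P((x,y),(a,b)))$. Fix once and for all a path $w$ in $X$ from $a$ to $b$ (possible since $X$ is path-connected). A path in $X\times X$ from $(x,y)$ to $(a,b)$ is a pair $(\alpha,\beta)$ with $\alpha\in P(x,a)$ and $\beta\in P(y,b)$, and the assignment $(\alpha,\beta)\mapsto\alpha\cdot w\cdot\bar\beta\in P(x,y)$ is continuous, being built from the coordinate projections $P(X\times X)\to P(X)$, path reversal, and concatenation. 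Applying $\mathcal B_{n+1}$ to this map and precomposing with $H$ yields a continuous map $s:X\times X\to\mathcal B_{n+1}(P(X))$ with $s(x,y)\in\mathcal B_{n+1}(P(x,y))$, i.e., an $(n+1)$-distributed navigation algorithm, so $\dTC(X)\le n=d\cat(X\times X)$.

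The only points that require care — and hence the main, rather minor, obstacle — are the continuity claims: one must check that the stated operations on path spaces (reversal, concatenation, and projecting/pairing coordinates) are continuous in the compact-open topology, and that the induced pushforward maps on $\mathcal B_{n+1}$ are continuous and do not increase the size of supports, so that they land in $\mathcal B_{n+1}$ rather than in some $\mathcal B_m$ with $m>n+1$. Both facts follow from the functoriality of $\mathcal B_k$ already used in the proof of Proposition~\ref{h.i.cat}, together with the observation that the pushforward of a measure supported on at most $n+1$ points is supported on at most $n+1$ points (the weights of paths identified under the map simply add).
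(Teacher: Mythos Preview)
Your proof is correct and is exactly the natural argument; the paper itself omits the proof entirely, declaring the proposition ``easy to prove,'' and what you have written is the expected adaptation of the classical $\cat(X)\le\TC(X)\le\cat(X\times X)$ argument using the functoriality of $\mathcal B_{n+1}$ already invoked in Proposition~\ref{h.i.cat}. As a minor simplification for the first inequality, you could set $G(x)=s(x,x_0)$ directly and avoid the path reversal, but your version is equally valid.
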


\begin{prop}\label{inequalityy}
$\max\{\dTC(X), \dTC(Y)\} \le \dTC(X \times Y)$.
\end{prop}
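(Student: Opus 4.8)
The plan is to establish the two inequalities $\dTC(X) \le \dTC(X \times Y)$ and $\dTC(Y) \le \dTC(X \times Y)$ separately, and by symmetry it suffices to treat the first one. Suppose $\dTC(X \times Y) = n$, so that there is an $(n+1)$-distributed navigation algorithm $$S : (X \times Y) \times (X \times Y) \to \mathcal{B}_{n+1}\bigl(P(X \times Y)\bigr),$$ with $S((x,y),(x',y'))$ supported on paths in $X \times Y$ from $(x,y)$ to $(x',y')$. I want to produce an $(n+1)$-distributed navigation algorithm on $X$ out of this.

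First I would fix a base point $y_0 \in Y$, which is legitimate since all spaces here are path-connected. The idea is to feed pairs of points of $X$ into $S$ by attaching the constant coordinate $y_0$: given $(x, x') \in X \times X$, consider $S((x,y_0),(x',y_0)) \in \mathcal{B}_{n+1}(P(X \times Y))$. This is a formal sum $\sum \lambda_\phi \phi$ where each $\phi$ is a path in $X \times Y$ from $(x,y_0)$ to $(x',y_0)$. Now I compose with the projection $\pi_X : X \times Y \to X$, which induces a continuous map $(\pi_X)_* : \mathcal{B}_{n+1}(P(X \times Y)) \to \mathcal{B}_{n+1}(P(X))$ by functoriality of $\mathcal{B}_{n+1}$ (applied to the map $P(\pi_X) : P(X\times Y) \to P(X)$ on path spaces, which is continuous in the compact-open topology). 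For each $\phi$, the path $\pi_X \circ \phi$ runs in $X$ from $x$ to $x'$, so the pushed-forward measure lands in $\mathcal{B}_{n+1}(P(x,x'))$. Defining $$s(x,x') = (\pi_X)_*\,S\bigl((x,y_0),(x',y_0)\bigr)$$ gives the desired $(n+1)$-distributed navigation algorithm on $X$, whence $\dTC(X) \le n = \dTC(X \times Y)$. The same argument with $\pi_Y$ and a fixed $x_0 \in X$ yields $\dTC(Y) \le \dTC(X \times Y)$.

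The only point requiring any care — and the main obstacle — is checking that the maps involved are genuinely continuous: namely that $(x,x') \mapsto S((x,y_0),(x',y_0))$ is continuous (this is just restriction of the continuous map $S$ along the continuous embedding $X \times X \hookrightarrow (X\times Y)^2$), and that post-composition with a continuous map on path spaces induces a continuous map on the measure spaces $\mathcal{B}_{n+1}$. The latter follows from the functoriality of the construction $Z \mapsto \mathcal{B}_n(Z)$ already used in the proofs of Propositions~\ref{h.i.cat} and~\ref{hominvdTC}, together with the continuity of $P(\pi_X)$; one can verify it directly using the basic neighborhoods $U(\mu,\epsilon)$ described in Section~\ref{New Numerical Invariants}, since a $1$-Lipschitz map $Z \to Z'$ does not increase the Lévy–Prokhorov distance, and $\pi_X$ (hence $P(\pi_X)$ for the sup metric on paths) is $1$-Lipschitz when $X \times Y$ carries, say, the max metric. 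Everything else is a routine unwinding of definitions.
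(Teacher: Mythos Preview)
Your proof is correct. The paper does not actually give a proof of this proposition; it merely states that it is ``easy to prove,'' so there is no argument to compare against in detail.

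That said, a slightly shorter route is available using machinery already in the paper: for any $y_0 \in Y$ the projection $\pi_X : X \times Y \to X$ together with the inclusion $x \mapsto (x,y_0)$ exhibits $X$ as a retract of $X \times Y$, so in particular $X \times Y$ homotopy dominates $X$; then Proposition~\ref{hominvdTC} (domination implies $\dTC(X \times Y) \ge \dTC(X)$) gives the inequality immediately. Your argument is precisely the specialization of that proof to this situation --- since $\pi_X \circ (x \mapsto (x,y_0)) = 1_X$ on the nose, the homotopy $H$ in that proof is constant and the concatenations with $h_y$ and $\bar h_{y'}$ disappear, leaving exactly the formula $s(x,x') = (\pi_X)_* S((x,y_0),(x',y_0))$ you wrote down. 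Either presentation is fine; the continuity discussion you included is appropriate and the functoriality of $\mathcal B_{n+1}$ handles the pushforward just as you say.
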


\begin{ex}\label{rpn}
$\dTC(\R P^n)=1$ for all $n$.
\end{ex}
\begin{proof}
We recall that points in $\R P^n$ are lines through the origin in $\R^{n+1}$. Two different lines $x$ and $y$ define a plane in $\R^{n+1}$. Let $\alpha$ and $\beta$ be the angles generated by our lines such that $\alpha+\beta=\pi$. There are two rotations $R_\alpha$ and $R_\beta$ of the line  $x$ to the line $y$ by the angle $\alpha$ and $\beta$, respectively. Each rotation is a path in $\R P^n$
from $x$ to $y$. We define a 2-distributed path from $x$ to $y$ by the formula $$s(x,y)=\frac{\beta}{\pi}R_\alpha+\frac{\alpha}{\pi}R_\beta.$$
We define $s(x,x)$ to be the constant path (the rotation with the angle zero). Clearly, the map $$s:\R P^n\times\R P^n\to\mathcal B_2(P(\R P^n))$$ is continuous.
Thus, $\dTC(\R P^n)\le 1$. Since $\R P^n$ is not contractible, $\dTC(\R P^n)=1$.
\end{proof}

\begin{rem}
    We note that in general, it is difficult to compute the exact values $\TC(\R P^{n})$~\cite{FTY}.
\end{rem}

We recall that a space $X$ with $\TC(X)=1$ for the classical topological complexity is homotopy equivalent to an odd-dimensional sphere~\cite{GLO}. However, this does not hold in the case of dTC: take $X = \mathbb{R}P^{n}$, $n > 1$.
\begin{thm}\label{topgroup}
For a topological group $X$, $\dTC(X)=d\cat(X)$.
\end{thm}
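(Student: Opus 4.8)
The plan is to mimic the classical proof that $\TC(G)=\cat(G)$ for a topological group $G$, adapting it to the distributional setting. Recall that for classical $\TC$, the key fact is that left translation turns the motion planning problem on $G\times G$ into a based contractibility problem: a path from $x$ to $y$ corresponds to a path from $e$ to $x^{-1}y$ via $\gamma\mapsto x^{-1}\gamma$. The operations involved — multiplication $G\times G\to G$, inversion, and pointwise translation of paths — are all continuous, so they interact well with the functor $\mathcal B_k$.

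First I would prove the inequality $\dTC(X)\le d\cat(X)$. Write $X=G$ for the topological group. Suppose $d\cat(G)=k-1$, so by definition there is a $k$-distributed contraction $G\to\mathcal B_k(P_0(G))$ of $G$ to $e$; concretely a continuous map $x\mapsto \sum_\phi\lambda_\phi(x)\,\phi$ where each $\phi$ runs from $x$ to $e$. (Here I choose the basepoint $x_0=e$, the identity.) Given this, define a $k$-distributed navigation algorithm $s:G\times G\to\mathcal B_k(P(G))$ by the formula
$$s(x,y)=\sum_\phi\lambda_\phi(x^{-1}y)\,\bigl(x\cdot\phi\bigr)^{-1},$$
or more precisely: let $G(x^{-1}y)=\sum_\phi\lambda_\phi\,\phi$ be the given distributed contraction evaluated at $x^{-1}y$, where each $\phi$ goes from $x^{-1}y$ to $e$; reverse each $\phi$ to get $\bar\phi$ from $e$ to $x^{-1}y$; then left-translate by $x$ to get the path $x\cdot\bar\phi$ from $x$ to $y$; and set $s(x,y)=\sum_\phi\lambda_\phi\,(x\cdot\bar\phi)$. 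Continuity of $s$ follows because it is the composite of $(x,y)\mapsto(x,x^{-1}y)$, the given contraction, the reversal map on $\mathcal B_k(P(G))$, and the map induced on $\mathcal B_k$ by the continuous map $P(G)\to P(G)$, $\gamma\mapsto x\cdot\gamma$ — the latter being continuous in $(x,\gamma)$ jointly, hence inducing a continuous map after applying $\mathcal B_k$ fiberwise. Since each $x\cdot\bar\phi$ is a genuine path from $x$ to $y$, this is a valid $k$-distributed navigation algorithm, giving $\dTC(G)\le k-1=d\cat(G)$.

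The reverse inequality $d\cat(X)\le\dTC(X)$ is already available in general: it is the left-hand inequality in Proposition~\ref{inequality}, which holds for every space. Combining the two inequalities yields $\dTC(X)=d\cat(X)$, completing the proof.

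The step I expect to require the most care is verifying that the formula for $s$ is honestly continuous as a map into $\mathcal B_k(P(G))$ with its Lévy–Prokhorov topology — in particular checking that the pointwise-translation operation $(x,\gamma)\mapsto x\cdot\gamma$ induces a continuous map $G\times\mathcal B_k(P(G))\to\mathcal B_k(P(G))$. This should follow from the functoriality of $\mathcal B_k$ used already in the proof of Proposition~\ref{h.i.cat} together with the fact that a distance-nonincreasing (or at least uniformly continuous on compacta) reparametrization of the target space $P(G)$ induces a continuous map on measure spaces; one may need to invoke the basis description $U(\mu,\epsilon)$ from Section~\ref{New Numerical Invariants} to make this rigorous. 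Everything else — the algebra of concatenation, reversal, and translation of paths — is routine.
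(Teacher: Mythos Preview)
Your proposal is correct and follows essentially the same approach as the paper: use the group structure to translate the $k$-distributed contraction into a $k$-distributed navigation algorithm, then invoke Proposition~\ref{inequality} for the reverse inequality. The paper's formula $s(x,y)=yH(y^{-1}x)$ is marginally slicker than yours since left-translating by $y$ the paths from $y^{-1}x$ to $e$ already gives paths from $x$ to $y$, so no reversal step is needed---but the underlying idea is identical.
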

\begin{proof}
Let $d\cat(X) = k-1$ and $H:X\to\mathcal B_k(P_0(X))$ be a $k$-distributed contraction of the group $X$ to its unit $e\in X$.
We define a navigation algorithm
$$s:X\times X\to\mathcal B_k(P(X))$$ by the formula $s(x,y)=yH(y^{-1}x)$. Note that $s(x,y)$ is a $k$-distributed path  from $yy^{-1}x=x$ to $ye=y$. So, $\dTC(X) \le k-1 = d\cat(X)$.
\end{proof}
As in the case of classical $\cat$ and $\TC$~\cite{LS}, this result also holds for CW $H$-spaces.

\section{Lower Bounds}\label{Lower Bounds}
\subsection{Symmetric power}
The symmetric $m^{th}$ power of a space $X$ is defined as the orbit space $SP^m(X)=X^m/S_m$ of the action of the symmetric group $S_m$ on $X^m$ by permutations of coordinates.
Elements of $SP^m(X)$ are the orbits $[x_1,\dots, x_m]$ of $(x_1,\dots, x_m)\in X^m$. They can be considered as formal sums $\sum n_ix_i$, $x_i\in X$, $n_i\in\mathbb N$, $\sum n_i=m$, subject to the equivalence $kx+\ell x=(k+\ell)x$.
In this section, we regard $X$ as a subspace of $SP^{m}(X)$ under the diagonal embedding $\delta_{m}:X\to SP^m(X)$ which takes $x$ to the fixed point $[x,\dots,x]$ of the action of $S_m$ on $X^m$. Thus, $\delta_{m}(x)=mx$ by our convention. 

Let $x_0\in X$ be a base point. We denote by $\xi_m:X\to SP^m(X)$ the inclusion defined by the formula $\xi_m(x)=[x,x_0,\dots x_0]=x+(m-1)x_0$ and denote by $$\xi^m_{m+1}:SP^m(X)\to SP^{m+1}(X)$$ the inclusion defined as $\xi^m_{m+1}([x_1,\dots, x_m])=[x_1,\dots, x_m,x_0]$. The direct limit $$SP^\infty(X)=\lim_{m \to \infty}\{SP^m(X),\xi^m_{m+1}\}$$
is the free abelian topological monoid generated by $X$ with the addition defined as the addition of formal sums $\sum n_ix_i$ with $x_0$ being the zero element. Let $M(G,k)$ be a Moore space, i.e., a CW-complex having nontrivial reduced homology only in dimension $k$ and $H_k(M(G,k))=G$. The Dold-Thom theorem~\cite{DT} says that
the infinite symmetric power $SP^\infty$ takes Moore spaces to Eilenberg-MacLane spaces,
$SP^\infty(M(G,k))=K(G,k)$. The  Dold-Thom theorem can be extended to the following.
\begin{thm}\label{DT}
For any CW-complex $X$, there is a homomorphism of topological abelian monoids
$$
SP^\infty(X)\to\prod_k SP^\infty(M(H_k(X),k))$$ 
which is a homotopy equivalence.
\end{thm}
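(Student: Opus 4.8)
The plan is to reduce the statement to the classical Dold--Thom theorem, two consequences of which I use freely: $SP^\infty(M(G,k))\simeq K(G,k)$, and, for a connected CW complex $Y$, $\pi_n(SP^\infty(Y))\cong\tilde H_n(Y)$ for every $n\ge 1$. First I would rewrite the target. Since $SP^\infty$ of a one-point union is the restricted product of the $SP^\infty$'s of the summands, $\prod_k SP^\infty(M(H_k(X),k))$ is, up to the inclusion of the restricted product into the full product, the monoid $SP^\infty(W)$ where $W=\bigvee_k M(H_k(X),k)$; and because the factors $K(H_k(X),k)$ have their homotopy concentrated in pairwise distinct degrees, that inclusion induces an isomorphism on all homotopy groups, hence (everything being CW) is a homotopy equivalence. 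Note that $\tilde H_n(W)\cong\tilde H_n(X)$ for all $n$. So it is enough to construct a homomorphism of topological abelian monoids $SP^\infty(X)\to SP^\infty(W)$ inducing an isomorphism on homotopy groups, which by Dold--Thom and Whitehead's theorem will automatically be a homotopy equivalence.

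The core fact I would establish is that $SP^\infty$ of a connected CW complex is always a product of Eilenberg--MacLane spaces whose homotopy type is read off the chain homotopy type of the cellular chain complex. For connected $Y$, the natural inclusion of $SP^\infty(Y)$, the free topological abelian \emph{monoid} on $(Y,y_0)$, into the free topological abelian \emph{group} $\mathrm{AG}(Y)$ on $(Y,y_0)$ is a homotopy equivalence: both are connected, hence group-like, and by naturality of the Dold--Thom isomorphism the map is the identity on $\pi_n\cong\tilde H_n(Y)$ for $n\ge 1$. In turn $\mathrm{AG}(Y)$ is the geometric realization of the free simplicial abelian group on a simplicial model of $Y$, that is, of the Dold--Kan image of $\tilde C_*(Y)$. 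Since $\mathbb Z$ is a principal ideal domain, every bounded-below complex of free abelian groups --- in particular $\tilde C_*(Y)$ --- is chain homotopy equivalent to the direct sum $\bigoplus_n R^{(n)}[n]$ of (length $\le 1$) free resolutions of its homology; passing through Dold--Kan and realization turns $\mathrm{AG}(Y)$ into $\prod_n K(\tilde H_n(Y),n)$. Applying this with $Y=X$ gives $SP^\infty(X)\simeq\prod_n K(\tilde H_n(X),n)$, and since $\tilde C_*(W)$ is literally a direct sum of the above form, the same computation gives $SP^\infty(W)\simeq\prod_n K(\tilde H_n(X),n)$ as well.

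To obtain an honest monoid homomorphism in the direction the theorem asks for, I would start from a single chain homotopy equivalence $\tilde C_*(X)\to\tilde C_*(W)$ --- two bounded-below complexes of projectives that are quasi-isomorphic are chain homotopy equivalent through one chain map --- realize it to a homotopy equivalence $\mathrm{AG}(X)\to\mathrm{AG}(W)$ of topological abelian groups, and splice it with the monoid inclusion $SP^\infty(X)\hookrightarrow\mathrm{AG}(X)$ and with a monoid homotopy inverse of the inclusion $SP^\infty(W)\hookrightarrow\mathrm{AG}(W)$, which exists because all the spaces involved are products of Eilenberg--MacLane spaces. The resulting composite $SP^\infty(X)\to\prod_k SP^\infty(M(H_k(X),k))$ is a homomorphism of topological abelian monoids and a homotopy equivalence. (For connected $X$ the degree-$0$ factor should be read with reduced homology, so it contributes only a point.)

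The main obstacle is exactly the structural input of the second paragraph: upgrading ``$SP^\infty(X)$ has the right homotopy groups'' to ``$SP^\infty(X)$ is genuinely a product of Eilenberg--MacLane spaces''. This rests on two points that must be invoked carefully --- that for connected spaces $SP^\infty$ agrees up to homotopy with the free topological abelian group (a standard consequence of the Dold--Thom quasifibration theorem), and that a free simplicial abelian group splits up to weak equivalence into Eilenberg--MacLane pieces, which is precisely the statement that $\mathbb Z$ has global dimension one, equivalently that chain complexes over $\mathbb Z$ are formal. A more elementary but longer alternative is to induct over the skeleta of $X$: the Dold--Thom quasifibrations $SP^\infty(X^{(n-1)})\to SP^\infty(X^{(n)})\to SP^\infty(X^{(n)}/X^{(n-1)})\simeq K(C_n^{\mathrm{cell}}(X),n)$ present $SP^\infty(X)$ as an iterated quasifibration over products of Eilenberg--MacLane spaces whose classifying maps are the cellular differentials of $X$, and deforming these differentials into split form --- once again using formality of $\mathbb Z$-chain complexes --- yields the splitting directly.
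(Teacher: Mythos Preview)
The paper does not give a proof of this theorem; it is stated as a known extension of the Dold--Thom theorem and used as background input in the proof of Proposition~\ref{rational}. So there is no argument in the paper to compare yours against.

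Your sketch is along standard lines (pass to the free abelian group model, Dold--Kan, formality of bounded-below free $\mathbb Z$--chain complexes) and is essentially correct, but one step is underspecified: you invoke ``a monoid homotopy inverse of the inclusion $SP^\infty(W)\hookrightarrow \mathrm{AG}(W)$, which exists because all the spaces involved are products of Eilenberg--MacLane spaces.'' A monoid homomorphism that is a homotopy equivalence need not admit a \emph{strict} monoid homotopy inverse, and the stated reason does not produce one. This is easily circumvented. Since $\prod_k SP^\infty(M(H_k(X),k))$ is itself an abelian topological monoid, any pointed map $X\to\prod_k SP^\infty(M(H_k(X),k))$ extends uniquely to a monoid homomorphism out of $SP^\infty(X)$ by the universal property of the free abelian monoid; choosing in the $k$-th factor a map $X\to K(H_k(X),k)$ representing the class in $H^k(X;H_k(X))$ corresponding to $\mathrm{id}_{H_k(X)}$ and then applying the naturality of the Dold--Thom isomorphism $\pi_n SP^\infty(-)\cong\tilde H_n(-)$ shows the resulting monoid map is a weak equivalence. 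Your treatment of the restricted-versus-full product is fine: since $\pi_n K(H_k(X),k)$ is nonzero for at most one $k$, the inclusion of the weak product into the full product is a weak equivalence.
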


The following theorem is attributed to Steenrod (unpublished) and its proofs were given (independently) by Nakaoka~\cite{N} and Dold~\cite{D}.
\begin{thm}\label{Dold}
For a finite simplicial complex $X$ and any $m\in\mathbb N\cup{\infty}$, the inclusion homomorphism $(\xi_m)_*:H_*(X;R)\to H_*(SP^m(X);R)$ is a split monomorphism for any coefficient ring $R$.
\end{thm}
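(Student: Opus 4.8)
The plan is to reduce the statement to the classical Dold--Thom/Steenrod picture by exploiting the homotopy equivalence of \theoref{DT}. First I would recall that the inclusion $\xi_m\colon X\to SP^m(X)$ is, up to homotopy, compatible with the structure maps $\xi^m_{m+1}$, so it suffices to prove the splitting for $m=\infty$ and then restrict: indeed, by \theoref{Dold}'s statement for $m=\infty$ (which we may take as the target), if $(\xi_\infty)_*$ splits and $(\xi^m_\infty)_*\colon H_*(SP^m(X))\to H_*(SP^\infty(X))$ is injective on the image, one recovers the finite case. Concretely, I would first establish that $(\xi^m_{m+1})_*$ is a split monomorphism in the stable range and then pass to the colimit; but the cleanest route is to treat $SP^\infty$ directly and deduce the $SP^m$ statement by a separate (easier) argument that the maps in the direct system $\{H_*(SP^m(X)),(\xi^m_{m+1})_*\}$ are eventually injective, which follows from the group-completion/scanning description of $SP^\infty$ or, more elementarily, from the fact that $SP^m(X)$ is a deformation retract of a neighborhood in $SP^\infty(X)$ built from the skeleta.

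Second, for the $m=\infty$ case I would invoke \theoref{DT}: the monoid homomorphism $SP^\infty(X)\to\prod_k SP^\infty(M(H_k(X),k))=\prod_k K(H_k(X),k)$ is a homotopy equivalence, so $H_*(SP^\infty(X);R)\cong H_*(\prod_k K(H_k(X),k);R)$. Under this identification the composite $X\xrightarrow{\xi_\infty}SP^\infty(X)\simeq\prod_k K(H_k(X),k)$ is, on the $k$-th factor, the canonical map $X\to K(H_k(X),k)$ classifying the fundamental class (this uses the naturality of Dold--Thom and that $\xi_\infty$ is induced by the Moore-space inclusions $M(H_k(X),k)\hookrightarrow X$ realizing a homology decomposition). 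Thus I must show the map $H_*(X;R)\to\prod_k H_*(K(H_k(X),k);R)$, $\alpha\mapsto (f_{k*}\alpha)_k$, is a split monomorphism.

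Third, the splitting itself: since each $K(H_k(X),k)$ is an infinite loop space, $H_*(K(H_k(X),k);R)$ carries a natural coalgebra/Pontryagin structure, and the key input is that the Hurewicz-type map $H_k(X;R)\to H_k(K(H_k(X),k);R)$ is injective for every $k$ and admits a natural retraction — for $R=\Z$ this is classical (the $k$-th homology of $K(G,k)$ is $G$ in degree $k$ and the fundamental class maps isomorphically), and for general $R$ one tensors and uses the universal coefficient theorem together with the fact that the torsion obstructions vanish because $K(G,k)$ has free-ish low-dimensional structure after splitting $G$ into cyclic pieces. Assembling these degreewise retractions over all $k$ (and checking compatibility with the product decomposition — only finitely many factors contribute in each fixed degree since $K(H_k(X),k)$ is $(k-1)$-connected) produces the desired natural splitting $s\colon H_*(SP^\infty(X);R)\to H_*(X;R)$ with $s\circ(\xi_\infty)_*=\mathrm{id}$.

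The main obstacle will be step three in the generality of an arbitrary coefficient ring $R$: one must verify that the retraction of $H_*(X)\to H_*(K(H_k(X),k))$ can be chosen so that it survives base change to $R$, i.e.\ that there is an integral splitting before tensoring. This is where I would lean on the precise structure of $H_*(K(G,k);\Z)$ (via the decomposition $G=\bigoplus \Z/d_i\oplus \Z^r$ and the Künneth theorem, reducing to $K(\Z,k)$ and $K(\Z/d,k)$) to exhibit the fundamental class as a direct summand integrally; once that is done, the $-\otimes R$ and $\mathrm{Tor}(-,R)$ pieces split automatically and the general-$R$ statement follows formally. A secondary, more bookkeeping-type difficulty is making the identification of $\xi_\infty$ with the product of fundamental-class maps fully rigorous, which requires choosing the homology decomposition of $X$ by Moore spaces functorially enough that \theoref{DT} applies on the nose; I would handle this by working in the category of CW-complexes and citing the standard construction of the Dold--Thom equivalence as natural in $X$.
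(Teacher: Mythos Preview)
The paper does not give its own proof of this theorem; it attributes the result to Steenrod (unpublished) and cites the published proofs of Nakaoka and Dold. There is therefore no argument in the paper to compare yours against.

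On the merits of your proposal: the reduction from finite $m$ to $m=\infty$ is far simpler than you suggest. Since $\xi^m_\infty\circ\xi_m=\xi_\infty$, any left inverse $s$ of $(\xi_\infty)_*$ immediately yields the left inverse $s\circ(\xi^m_\infty)_*$ of $(\xi_m)_*$; no injectivity statement about the direct system is needed, and the detour through ``deformation retracts of neighborhoods in $SP^\infty(X)$'' is unnecessary. For $m=\infty$ your use of \theoref{DT} does give the splitting over $\Z$: projection to the factor $K(H_n(X),n)$ recovers $H_n(X;\Z)$ as a summand of $H_n(SP^\infty(X);\Z)$. The genuine gap, which you correctly flag as ``the main obstacle'', is the passage to an arbitrary ring $R$. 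Projection to the single factor $K(H_n(X),n)$ does not detect the $\mathrm{Tor}(H_{n-1}(X;\Z),R)$ piece of $H_n(X;R)$, and your sketch of how to assemble retractions across several Eilenberg--MacLane factors compatibly with the universal coefficient filtration is not carried out; the structure of $H_*(K(G,k);\Z)$ for general finitely generated $G$ is intricate enough that this is real work, not bookkeeping. (Your parenthetical that ``$\xi_\infty$ is induced by the Moore-space inclusions $M(H_k(X),k)\hookrightarrow X$'' is also garbled: there are no such inclusions in general; what is true is that the projection to the $k$-th factor classifies the identity on $H_k$.)

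For contrast, the proofs of Dold and Nakaoka avoid all of this by working at the chain level: one constructs an explicit natural retraction of chain complexes $C_*(SP^m(X))\to C_*(X)$ (a transfer-type construction), which gives the split monomorphism simultaneously for every coefficient ring $R$ and every finite $m$, without ever invoking $SP^\infty$ or \theoref{DT}.
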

\begin{prop}\label{rational}
For a finite  simplicial complex $X$ and any $m\in\mathbb N$, the inclusion homomorphism $\delta_m^*:H^*(SP^m(X);\Q)\to H^*(X;\Q)$ is surjective.
\end{prop}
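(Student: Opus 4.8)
The plan is to pull $\delta_m$ apart through the quotient map $q\colon X^m\to SP^m(X)=X^m/S_m$: since $\delta_m(x)=[x,\dots,x]$, we have $\delta_m=q\circ\Delta_m$, where $\Delta_m\colon X\to X^m$ is the diagonal. Because $X$ is a finite simplicial complex, $X^m$ is one as well, $S_m$ acts simplicially on a suitable barycentric subdivision, and $SP^m(X)$ is again a finite complex; so the standard transfer argument for actions of finite (not necessarily free) groups applies and shows that $q^*\colon H^*(SP^m(X);\Q)\to H^*(X^m;\Q)$ is injective with image precisely the ring of invariants $H^*(X^m;\Q)^{S_m}$. Over $\Q$ the Künneth theorem identifies $H^*(X^m;\Q)$ with $H^*(X;\Q)^{\otimes m}$ carrying the permutation action of $S_m$, and $\Delta_m^*$ becomes the iterated cup product, since $\pi_i\circ\Delta_m=\id_X$ for each coordinate projection $\pi_i\colon X^m\to X$.

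Given $\alpha\in H^*(X;\Q)$, I would then produce a preimage by symmetrizing. Put $\omega=\sum_{i=1}^m\pi_i^*(\alpha)\in H^*(X^m;\Q)$. This class is $S_m$-invariant essentially for free: a permutation $\sigma$ satisfies $\sigma^*\pi_i^*(\alpha)=\pi_{\sigma(i)}^*(\alpha)$, so $\sigma^*\omega=\omega$, and no Koszul signs intervene because $\omega$ is written as a sum of pullbacks of the single class $\alpha$ rather than a sum of tensor monomials. Hence $\omega=q^*(\beta)$ for some $\beta\in H^*(SP^m(X);\Q)$, and
$$
\delta_m^*(\beta)=\Delta_m^*\bigl(q^*(\beta)\bigr)=\Delta_m^*(\omega)=\sum_{i=1}^m(\pi_i\circ\Delta_m)^*(\alpha)=\sum_{i=1}^m\alpha=m\alpha.
$$
Since $m$ is invertible in $\Q$, $\delta_m^*\bigl(\tfrac1m\beta\bigr)=\alpha$, and as $\alpha$ was arbitrary, $\delta_m^*$ is onto. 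The rational coefficients are essential: integrally $\delta_m^*$ only hits multiples of $m$, e.g.\ for $X=S^1$, $m=2$ one gets $\delta_2^*=2$ on $H^1$.

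The argument is short and I do not expect a genuine obstacle; the one non-elementary ingredient is the identification of $H^*(SP^m(X);\Q)$ with the $S_m$-invariants of $H^*(X^m;\Q)$, i.e.\ the transfer for finite group actions, which should simply be quoted. Everything else — the factorization $\delta_m=q\circ\Delta_m$, the fact that the diagonal realizes iterated cup products, the symmetric preimage, and dividing by $m$ — is routine. If one prefers to stay inside the symmetric-power formalism, there is an alternative: compose $\delta_m$ with the inclusion $SP^m(X)\hookrightarrow SP^\infty(X)\simeq\prod_k K(H_k(X),k)$ of Theorem~\ref{DT}; this composite equals the standard inclusion $\xi_\infty\colon X\hookrightarrow SP^\infty(X)$ followed by the degree-$m$ multiplication self-map of the monoid $SP^\infty(X)$. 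The multiplication map is a rational cohomology isomorphism, as it scales the weight-$w$ part of the free graded-commutative algebra $H^*(SP^\infty X;\Q)$ by $m^w$, while $\xi_\infty$ induces a surjection on rational cohomology by Theorem~\ref{Dold}; composing gives the claim. I would nonetheless use the $X^m$ computation, as it is the most self-contained.
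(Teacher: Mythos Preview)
Your argument is correct and takes a genuinely different route from the paper. You factor $\delta_m=q\circ\Delta_m$ through the quotient $q\colon X^m\to SP^m(X)$, invoke the transfer isomorphism $q^*\colon H^*(SP^m(X);\Q)\xrightarrow{\ \cong\ }H^*(X^m;\Q)^{S_m}$, and then exhibit the explicit invariant preimage $\omega=\sum_i\pi_i^*(\alpha)$ of $m\alpha$. The paper instead passes to $SP^\infty(X)$: it observes that $\xi^m_\infty\circ\delta_m$ equals the multiplication-by-$m$ self-map of the monoid $SP^\infty(X)$ composed with $\xi_\infty$, and then uses Dold--Thom (Theorem~\ref{DT}) together with the known rational cohomology of $K(\Z,k)$ to show that multiplication by $m$ is a rational homology isomorphism; combined with Theorem~\ref{Dold} this gives injectivity of $(\delta_m)_*$ on rational homology, hence surjectivity of $\delta_m^*$ by duality. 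Amusingly, your final paragraph sketches exactly this alternative.

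Your approach is the more elementary and self-contained one: it avoids Dold--Thom, the structure of $H^*(K(\Z,k);\Q)$, and the passage through homology, relying only on the classical fact that rational cohomology of a finite-group quotient is the invariants. The paper's approach, on the other hand, stays entirely within the symmetric-product/Dold--Thom formalism already set up in Theorems~\ref{DT} and~\ref{Dold}, and makes transparent \emph{why} rational coefficients are needed (multiplication by $m$ on $K(\Z,k)$ is only a rational equivalence). Either is perfectly acceptable here.
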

\begin{proof}
It suffices to show that $(\xi^m_{\infty})_*(\delta_m)_*:H_*(X;\Q)\to H_*(SP^\infty(X);\Q)$ is injective, where $\xi^m_\infty:SP^m(X)\to SP^\infty(X)$
is the natural inclusion defined by means of the base point $x_{0}$. We recall that $SP^\infty(X)$ is a free abelian monoid with $0=x_0$. Then $\xi^m_\infty\delta_m(x)=mx=m\xi_\infty(x)$ for all $x \in X$. Thus, $\xi^m_\infty\delta_m=m\xi_\infty$ and we need to show that
$$(m\xi_\infty)_*:H_*(X;\Q)\to H_*(SP^\infty(X);\Q)$$ is injective.
 Note that  $(m\xi_\infty)_*= m_*(\xi_\infty)_*$ where $$m:SP^\infty(X)\to SP^\infty(X)$$ is the multiplication by $m$ in $SP^\infty(X)$. 
In view of Theorem~\ref{Dold}, it suffices to show that $m_*$ is an isomorphism in rational homology. By virtue of  Theorem~\ref{DT}, there is a commutative diagram
$$
\begin{tikzcd}
    SP^\infty(X) \arrow{r}{\times m} \arrow[swap]{d} & SP^\infty(X) \arrow{d}
    \\
    \prod_k SP^\infty(M(H_k(X),k)) \arrow{r}{\times m} & \prod_k SP^\infty(M(H_k(X),k)) 
\end{tikzcd}
$$
where the vertical arrows are homotopy equivalences. In view of the Kunneth formula, it suffices to show that $$m:SP^\infty(M(H_k(X),k))\to SP^\infty(M(H_k(X),k))$$
induces an isomorphism of rational homology groups. Since each $H_k(X)$ is a finitely generated abelian group, the Kunneth formula reduces the problem to the case 
$$m:SP^\infty(M(C,k))\to SP^\infty(M(C,k))$$ where $C$ is a cyclic group. When $C$ is finite cyclic, then $m_*$ is an isomorphism of zero groups. 

We consider the case $m:SP^\infty(S^k)\to SP^\infty(S^k)$. Then the map $m$ is induced by a map $f:S^k\to S^k$ of degree $m$.
We recall that the cohomology ring $H^*(K(\Z,k);\Q)$ is generated by an element $a$ of dimension $k$. It is the polynomial algebra $\Q[a]$ when $k$ is even and the exterior algebra when $k$ is odd~\cite{FFG},~\cite{TD}. In both cases, $m:SP^\infty(S^k)\to SP^\infty(S^k)$ induces in rational cohomology a homomorphism which takes
$x$ to $mx$. Hence, $m$ induces an isomorphism $m^*$ in rational cohomology. Since rational homology is the dual of rational cohomology, $m_{*}$ is also an isomorphism.

We note that the reason why we switch from cohomology to homology and then back is that there is no Kunneth formula for cohomology for infinite CW complexes.
\end{proof}

\subsection{Lower bound for $d\cat$}

From now onwards, we will use the term \emph{inclusion} to refer to the diagonal embedding $\delta_{m}:X\to SP^m(X)$ that we defined above as $\delta_{m}(x) = [x,\ldots, x] = mx$.

\begin{lemma}\label{covercat}
    If $d\cat(X) < n$, then there exist sets $A_{1}, A_{2}, \ldots, A_{n}$ that cover $X$ such that for each $i$, the inclusion $A_{i} \to SP^{n!}(X)$ is null-homotopic.
\end{lemma}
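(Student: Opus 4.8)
The plan is to start from a $k$-distributed contraction of $X$ with $k = n$ (which exists since $d\cat(X) < n$ means $X$ is $n$-contractible) and convert the supports of the distributed paths into honest null-homotopies in a symmetric power. Let $G : X \to \mathcal B_n(P_0(X))$ be an $n$-distributed contraction, so each $G(x)$ is a formal sum $\sum_{i} \lambda_i(x)\phi_i$ of at most $n$ based paths from $x$ to $x_0$, with $\sum_i \lambda_i(x) = 1$. The idea is to stratify $X$ according to the ``combinatorial type'' of the support: for a point $x$ where the support has exactly $j \le n$ paths with weights genuinely bounded away from $0$, evaluating the distributed path at time $t$ and forgetting weights gives a point of $SP^{j}(X)$, hence (after the standard base-point stabilization) a point of $SP^{n!}(X)$. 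The reason for the factorial is uniformity: I want to land all the strata in a single symmetric power $SP^{N}(X)$ with $N$ divisible by every $j \in \{1, \dots, n\}$, so that on overlaps of strata the two descriptions agree; $N = n!$ is the convenient common choice, since a configuration of $j$ points can be repeated with multiplicity $n!/j$ to give a well-defined element of $SP^{n!}(X)$.

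Concretely, first I would cover $X$ by sets $A_j$ (for $j = 1, \dots, n$) where $A_j$ is, roughly, the locus of points $x$ for which the $j$-th largest weight in $G(x)$ exceeds some threshold, chosen so the $A_j$ cover $X$ (every $G(x)$ has at least one weight $\ge 1/n$, and we descend). Actually it is cleaner to take $A_i$ to be defined by an open condition on the weights so that on $A_i$ the support has a well-defined set of ``$i$ significant paths'', possibly with ties handled by passing to unordered collections — this is exactly why $\mathcal B_n$ and $SP$ are the right targets rather than ordered products. On $A_i$, define $\Psi_i : A_i \times [0,1] \to SP^{n!}(X)$ by sending $(x,t)$ to the formal sum of the significant path-values at time $t$, each taken with multiplicity $n!/i$. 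At $t = 0$ this is $\delta_{n!}(x) = [x,\dots,x]$ (all significant paths start at $x$), and at $t = 1$ it is $\delta_{n!}(x_0)$ (all end at $x_0$). Continuity of $\Psi_i$ follows from continuity of $G$ together with the description of the basis of $\mathcal B_n$ via the sets $U(\mu,\epsilon)$ recalled in Section~\ref{New Numerical Invariants}, and from the fact that the map $\mathcal B_n(P(X)) \to P(\mathcal B_n(X))$, evaluation $t \mapsto \sum \lambda_\phi \phi(t)$, is continuous. Thus $\Psi_i$ is a homotopy from $\delta_{n!}|_{A_i}$ to a constant map, i.e.\ the inclusion $A_i \to SP^{n!}(X)$ is null-homotopic.

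The main obstacle is handling ties and making the strata $A_i$ genuinely well-defined with the right (open, or at least ANR-friendly) structure: when two weights in $G(x)$ coincide, the ``$i$ significant paths'' is not canonically an ordered tuple, and the naive thresholding may not produce sets that cover $X$ or on which the multiplicity-$n!/i$ rule is continuous across the boundary where a weight crosses the threshold. The factorial exponent is precisely the device that absorbs this: by always mapping into $SP^{n!}(X)$ and using multiplicity $n!/i$, a point that is described by both the ``$i$ paths'' rule and the ``$i'$ paths'' rule on an overlap receives the same unordered sum, because the weight threshold can be arranged so that near such overlaps some weights are small and their paths stay close to the constant path at $x_0$, which is the stabilizing base point and hence invisible after passing to the based inclusion $\xi$. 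I would organize the argument so that each $A_i$ is the preimage under a continuous real-valued function (built from the ordered weights of $G(x)$) of a closed interval, giving a closed cover; the statement only asks for a cover by sets with null-homotopic inclusion, so closedness is acceptable, and if an open cover is wanted one enlarges each $A_i$ slightly using continuity of $\Psi_i$ and the homotopy extension property in the ANR $SP^{n!}(X)$.
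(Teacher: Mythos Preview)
Your core construction --- on a stratum where the distributed contraction $G(x)$ has exactly $i$ paths in its support, send $(x,t)$ to the element of $SP^{n!}(X)$ obtained by taking each path-value $\phi(t)$ with multiplicity $n!/i$ --- is exactly what the paper does, and your explanation of why $n!$ is the right exponent is correct. Where you go astray is in the stratification itself.

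You propose to define the $A_i$ via weight thresholds (``the $i$-th largest weight exceeds some threshold'') and then spend most of the proposal worrying about overlaps and boundary behaviour. The paper simply takes
\[
A_i \;=\; \{\, x \in X : |\supp G(x)| = i \,\},
\]
the locus where the support has \emph{exactly} $i$ elements. These sets form a partition of $X$, so there are no overlaps to reconcile, and the map $H_i(x,t) = \sum_{\phi \in \supp G(x)} \frac{n!}{i}\,\phi(t)$ is well-defined on each $A_i$. The lemma only asks for a cover by arbitrary sets --- no openness is required --- and the passage to an open cover is deferred to Proposition~\ref{opencovercat} and Corollary~\ref{imppp}.

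Your attempted fix for the overlap problem contains a genuine error. You claim that near overlaps ``some weights are small and their paths stay close to the constant path at $x_0$, which is the stabilizing base point and hence invisible after passing to the based inclusion $\xi$.'' But a path $\phi$ carrying small weight $\lambda_\phi$ need not be close to \emph{any} particular path: the weight and the path are independent data, and in $\mathcal B_n(P_0(X))$ a term of small weight can be supported on an arbitrary path far from $x_0$. So this patching mechanism does not work. (Relatedly, the inclusion in the lemma is the diagonal $\delta_{n!}$, not the base-point stabilization $\xi$; padding with copies of $x_0$ would not recover $\delta_{n!}(x)$ at $t=0$.) Your threshold stratification also has its own continuity problem: when a weight crosses the threshold, the set of ``significant paths'' jumps discontinuously even though $G$ is continuous, so $\Psi_i$ need not be continuous on your $A_i$. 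With the partition stratification by $|\supp G(x)|$, all of these difficulties disappear.
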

\begin{proof}
Let $H:X\to \mathcal B_n(P_0(X))$ be an $n$-distributed homotopy of the identity $1_X$ to a constant map to $x_0$. We define $$A_i=\{x\in X\mid |\supp H(x)|=i\}.$$
For every $i$ and $x\in A_i$, we give the weight $\frac{n!}{i}$ to each $\phi\in\supp H(x)$ to obtain a path $f_{x}^{i}:I\to SP^{n!}(X)$ defined by the formula
$$
f_{x}^{i}(t)=\sum_{\phi \hspace{1mm} \in \hspace{1mm} \supp H(x)}\frac{n!}{i}\phi(t).
$$
Note that $f_x^{i}$ is a path in $SP^{n!}(X)$ from $n! \hspace{0.5mm} x$ to $n! \hspace{0.5mm} x_0$. Then the formula $H_{i}(x,t)=f_x^{i}(t)$ defines a homotopy $$H_{i}:A_i\times I\to SP^{n!}(X)$$
from the inclusion to the constant map.
\end{proof}

We refer to~\cite{Sr},\cite{GC} for the proof of the following
\begin{prop}\label{opencovercat}
Let $A\subset X$ be a subset of an ANR-space that is contractible in $X$. Then there is an open neighborhood $U$ of $A$ contractible in $X$.
\end{prop}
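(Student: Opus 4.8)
The plan is to reduce Proposition~\ref{opencovercat} to the standard fact that in a metric ANR, the property of being contractible in $X$ is an "open" property, in the sense that it propagates to a neighborhood. First I would make precise the hypothesis: $A\subset X$ is contractible in $X$ means there is a homotopy $F\colon A\times I\to X$ with $F_0=\iota_A$ the inclusion and $F_1$ a constant map to some $x_0\in X$. The goal is to produce an open $U\supset A$ and a homotopy $G\colon U\times I\to X$ with $G_0$ the inclusion of $U$ and $G_1$ constant.

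The key step uses that $X$ is an ANR. Embed $X$ as a closed subset of a normed linear space (or in the Hilbert cube, or simply use that $A\times I$ is paracompact and $X$ is an ANR): the map $F\colon A\times I\to X$ together with the obvious partial extension over $X\times\{0\}$ given by $\id_X$ glue to a map defined on the closed subset $(A\times I)\cup(X\times\{0\})$ of $X\times I$ — here I use that $\iota_A\colon A\to X$ agrees with $F_0$, so the two pieces are compatible on $A\times\{0\}$. Since $X$ is an ANR, so is $X\times I$, hence this map extends to a map $\widetilde F\colon W\to X$ on some open neighborhood $W$ of $(A\times I)\cup(X\times\{0\})$ in $X\times I$. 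By compactness of $I$ and the tube lemma, there is an open set $U$ with $A\subset U\subset X$ such that $U\times I\subset W$. Restricting $\widetilde F$ to $U\times I$ gives a homotopy from $\id_U$ (on $U\times\{0\}$) to the map $\widetilde F(-,1)\colon U\to X$; but on $A$ this terminal map is the constant $x_0$, and since $A$ need not be all of $U$ we only know $\widetilde F(-,1)$ is defined and continuous on $U$, not constant.

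To finish, I would compose with a final homotopy that contracts the image: after shrinking $U$ if necessary so that $\widetilde F(U\times\{1\})$ lies in a neighborhood $V$ of $x_0$ that is contractible in $X$ (such $V$ exists because $\{x_0\}$ is contractible in $X$ and we can apply the already-proven weaker statement, or simply because $X$ is locally contractible being an ANR — one can take $V$ contractible outright, or contractible in $X$), we concatenate $\widetilde F|_{U\times I}$ with a homotopy inside $X$ contracting $V$ to $x_0$. The concatenation is continuous and gives the desired contraction of $U$ in $X$. The main obstacle is the bookkeeping in the extension step — verifying that $(A\times I)\cup(X\times\{0\})$ is closed in $X\times I$ and that the two given maps genuinely agree on the overlap $A\times\{0\}$, and then invoking the tube lemma correctly to extract $U\times I$ from the open neighborhood $W$. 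Everything else is routine. (This is precisely the argument in~\cite{Sr} and~\cite{GC}, so I would present it compactly and cite those sources for the details.)
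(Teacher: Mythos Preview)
The paper does not prove this proposition; it simply defers to \cite{Sr} and \cite{GC}. Your sketch follows the standard ANR-extension route and cites the same sources, so you are aligned with the paper's treatment.

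That said, what you label ``bookkeeping'' hides a genuine gap. The set $(A\times I)\cup(X\times\{0\})$ is closed in $X\times I$ \emph{if and only if} $A$ is closed in $X$: for any $x\in\bar A\setminus A$ the point $(x,1)$ lies in the closure of $A\times I$ but not in your set. Since the ANR neighborhood-extension theorem requires the domain to be closed in the ambient metric space, your step ``hence this map extends to a neighborhood $W$'' does not follow for general $A$. (Incidentally, the hypothesis you need is that the \emph{target} $X$ is an ANR; that $X\times I$ is also an ANR is true but irrelevant here.) In the paper's intended application (Lemma~\ref{covercat}) the sets $A_i=\{x:|\supp H(x)|=i\}$ happen to be locally closed --- each is the intersection of the closed set $\{|\supp H(x)|\le i\}$ with the open set $\{|\supp H(x)|\ge i\}$ --- so one may pass to an open ANR subspace of $X$ in which $A_i$ is closed and then run your argument verbatim. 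For arbitrary $A$, however, one needs the more careful treatment carried out in \cite{Sr} and \cite{GC}. Your final concatenation step, using local contractibility of the ANR $X$ near $x_0$, is fine.
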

\begin{cor}\label{imppp}
If $d\cat(X) < n$, then there exist open sets $U_{1}, U_{2}, \ldots, U_{n}$ that cover $X$ such that for each $1 \leq i \leq n$, the set $U_i$ is contractible
in $SP^{n!}(X)$. 
\end{cor}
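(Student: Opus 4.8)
The plan is to deduce this directly from Lemma~\ref{covercat} and Proposition~\ref{opencovercat}; the only extra ingredient needed is that $SP^{n!}(X)$ is again a metric ANR. I would first record this: for a metric ANR $X$, the finite power $X^{n!}$ is a metric ANR, and the orbit space of a finite group (here $S_{n!}$) acting on a metric ANR is again a metric ANR, so $SP^{n!}(X)$ is a metric ANR. (For the classes of spaces actually used in this section, e.g.\ finite simplicial complexes or CW complexes, $SP^{n!}(X)$ is again a complex of the same type and the point is immediate.) Hence Proposition~\ref{opencovercat} is applicable with ambient space $SP^{n!}(X)$.

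Next, assuming $d\cat(X) < n$, Lemma~\ref{covercat} yields a cover $A_{1}, A_{2}, \ldots, A_{n}$ of $X$ such that for each $i$ the inclusion $\delta_{n!}|_{A_{i}} : A_{i} \to SP^{n!}(X)$ is null-homotopic; equivalently, the subset $\delta_{n!}(A_{i}) \subset SP^{n!}(X)$ is contractible in $SP^{n!}(X)$. Applying Proposition~\ref{opencovercat} to the subset $\delta_{n!}(A_{i})$ of the ANR $SP^{n!}(X)$, I obtain an open set $V_{i} \subset SP^{n!}(X)$ with $\delta_{n!}(A_{i}) \subset V_{i}$ such that $V_{i}$ is contractible in $SP^{n!}(X)$.

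Finally I set $U_{i} = \delta_{n!}^{-1}(V_{i})$. Since $\delta_{n!}$ is continuous, each $U_{i}$ is open in $X$; since $\delta_{n!}(A_{i}) \subset V_{i}$, we have $A_{i} \subset U_{i}$, so the sets $U_{1}, \ldots, U_{n}$ cover $X$. Moreover the inclusion $U_{i} \hookrightarrow SP^{n!}(X)$ factors as $U_{i} \xrightarrow{\delta_{n!}} V_{i} \hookrightarrow SP^{n!}(X)$, and the second map is null-homotopic, hence so is the composite. Thus each $U_{i}$ is contractible in $SP^{n!}(X)$, which is the desired conclusion.

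I do not expect any real obstacle here beyond the ANR property of $SP^{n!}(X)$: once that is granted, the statement is a routine reduction to Lemma~\ref{covercat} and Proposition~\ref{opencovercat}. The one place to be slightly careful is to note that ``contractible in $SP^{n!}(X)$'' for a subset of $X$ is interpreted via the fixed diagonal embedding $\delta_{n!}$, consistently with the convention fixed just before Lemma~\ref{covercat}, so that the factorization $U_{i} \xrightarrow{\delta_{n!}} V_{i} \hookrightarrow SP^{n!}(X)$ is exactly the inclusion in the intended sense.
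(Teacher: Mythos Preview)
Your proof is correct and follows exactly the approach the paper intends: the corollary is stated without proof, as an immediate consequence of Lemma~\ref{covercat} and Proposition~\ref{opencovercat}, and you carry out precisely this deduction. The only detail you supply that the paper leaves implicit is that $SP^{n!}(X)$ is again a metric ANR (so that Proposition~\ref{opencovercat} applies with ambient space $SP^{n!}(X)$), and your pullback $U_i=\delta_{n!}^{-1}(V_i)$ is the natural way to produce the open cover of $X$ itself.
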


\begin{theorem}\label{boundcat}
 Suppose that $\alpha_{i}^{*} \in H^{k_{i}}\left(SP^{n!}(X);R\right)$, $1 \leq i \leq n$, for some ring $R$ and $k_{i} \geq 1$. Let  $\alpha_{i} \in H^{k_{i}}(X;R)$ be the image of $\alpha_{i}^{*}$ under the inclusion homomorphism such that  $\alpha_{1} \smile \alpha_{2} \smile \cdots \smile \alpha_{n} \neq 0$. Then $$d\cat(X) \geq n.$$
\end{theorem}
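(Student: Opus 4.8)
The plan is to argue by contradiction, exactly parallel to the classical cup-length lower bound for $\cat$, but using the symmetric power $SP^{n!}(X)$ in place of $X$ itself. Suppose $d\cat(X) < n$. Then by Corollary~\ref{imppp} there is an open cover $U_1, \dots, U_n$ of $X$ such that each inclusion $j_i : U_i \hookrightarrow SP^{n!}(X)$ is null-homotopic. Write $q : X \to SP^{n!}(X)$ for the diagonal inclusion $\delta_{n!}$, so that $q|_{U_i} = j_i$ is null-homotopic for every $i$, and $\alpha_i = q^*(\alpha_i^*)$.

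The key step is then the standard relative-cohomology argument. Since $q|_{U_i}$ is null-homotopic, the class $\alpha_i = q^*\alpha_i^*$ restricts to zero on $U_i$, so it lifts to a relative class $\bar\alpha_i \in H^{k_i}(X, U_i; R)$ mapping to $\alpha_i$ under $H^{k_i}(X, U_i; R) \to H^{k_i}(X; R)$. Using the relative cup product
$$
H^{k_1}(X, U_1; R) \otimes \cdots \otimes H^{k_n}(X, U_n; R) \longrightarrow H^{k_1 + \cdots + k_n}\!\left(X, \textstyle\bigcup_{i=1}^n U_i; R\right),
$$
the product $\bar\alpha_1 \smile \cdots \smile \bar\alpha_n$ lands in $H^*(X, \bigcup U_i; R) = H^*(X, X; R) = 0$, because $\{U_i\}$ covers $X$. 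Mapping back to $H^*(X; R)$ via the natural map from the relative to the absolute groups, and using naturality of the cup product, we conclude $\alpha_1 \smile \cdots \smile \alpha_n = 0$, contradicting the hypothesis. Hence $d\cat(X) \geq n$.

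I would present this cleanly by first recording the general lemma that if $X$ is covered by open sets $U_1, \dots, U_n$ and classes $\beta_i \in H^{k_i}(X;R)$ each restrict to zero on the corresponding $U_i$, then $\beta_1 \smile \cdots \smile \beta_n = 0$ — this is exactly the mechanism behind the classical bound $\cuplength(X) \le \cat(X)$ and can be cited from~\cite{CLOT}. The only new input is that "restricts to zero on $U_i$" is supplied here not by contractibility of $U_i$ in $X$, but by contractibility of $U_i$ in $SP^{n!}(X)$ together with the fact that $\alpha_i$ is pulled back from $SP^{n!}(X)$; this is what forces the appearance of $n!$ (coming from Lemma~\ref{covercat}, where a point with $i$ paths in its support needs weights $n!/i \in \mathbb{N}$ for all possible $i \le n$).

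The main obstacle is not really an obstacle but a bookkeeping point: one must make sure the relative classes $\bar\alpha_i$ and their relative cup product are set up with the correct pairs $(X, U_i)$, and that the factorization $\alpha_i = q^* \alpha_i^*$ is genuinely used to get the restriction $\alpha_i|_{U_i} = (q|_{U_i})^* \alpha_i^* = 0$ (using that $q|_{U_i}$ is null-homotopic). Everything else is the textbook relative cup-product argument, so I would keep that part brief and refer to~\cite{CLOT} for the details of the relative cup-length formalism.
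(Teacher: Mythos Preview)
Your proposal is correct and follows essentially the same approach as the paper: assume $d\cat(X)<n$, invoke Corollary~\ref{imppp} to get the open cover, and run the standard relative cup-length argument to force $\alpha_1\smile\cdots\smile\alpha_n=0$. The only cosmetic difference is that the paper first enlarges the $U_i$ to open sets in $SP^{n!}(X)$ and lifts $\alpha_i^*$ to $H^{k_i}(SP^{n!}(X),U_i;R)$ before pulling back to $(X,U_i\cap X)$, whereas you lift directly in $(X,U_i)$ using $\alpha_i|_{U_i}=(q|_{U_i})^*\alpha_i^*=0$; your route is slightly more economical but the argument is the same.
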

\begin{proof}
We proceed by contradiction. Assume that $d\cat(X) < n$. Then from Corollary~\ref{imppp}, we get open cover $\left\{U_{i}\right\}_{i=1}^{n}$ of $X$. We may assume that the sets $U_{i}$ are open in $SP^{n!}(X)$. Since $U_{i} \subset SP^{n!}(X)$ is 
contractible in $SP^{n!}(X)$, the inclusion homomorphism $$H^{k_i}(SP^{n!}(X);R)\to  H^{k_i}(U_i;R)$$ is trivial. The cohomology exact sequence of the pair $(SP^{n!}(X),U_i)$ implies that the homomorphism $$j_i^{*}:H^{k_i}(SP^{n!}(X),U_i;R)\to H^{k_i}(SP^{n!}(X);R)$$ is surjective. Let $j_i^*(\overline{\alpha_i^*})=\alpha_i^*$ and $\bar\alpha_i=\delta_{m}^{*}(\overline{\alpha^*_i})$.
Consider the following commutative diagram, where $j$ and $j'$ are sums of maps $j_{i}$ and $j_{i}'$, respectively, and $k = \sum_{i=1}^{n} k_{i}$.
\[ 
    \begin{tikzcd}[contains/.style = {draw=none,"\in" description,sloped}]
  H^{k}(X;R)  
& \arrow{l}{(j')^*} H^{k}\left(X,\bigcup_{i=1}^{n}(U_{i}\cap X);R\right)  
\\
H^{k}\left(SP^{n!}(X);R\right)  \arrow[swap]{u}{\delta^{*}}
& H^k\left(SP^{n!}(X),\bigcup_{i=1}^nU_{i};R\right) \arrow[swap]{u}{\delta^{*}} \arrow{l}{j^*} 
\\
\end{tikzcd}
\]
Since $\bar\alpha_{1}  \smile \bar\alpha_{2}  \smile \cdots \smile\bar \alpha_{n}\in H^k(X,X;R)=0$ and $$0\ne \alpha_{1} \smile \cdots \smile \alpha_{n}=(j')^*(\bar\alpha_{1}  \smile \cdots \smile\bar \alpha_{n}) =0,$$
we obtain a contradiction.
\end{proof}
\begin{cor}\label{rational cat}
The rational cup-length of  $X$ is a lower bound for $d\cat(X)$.
\end{cor}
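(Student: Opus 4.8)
The plan is to derive Corollary~\ref{rational cat} directly from Theorem~\ref{boundcat} together with Proposition~\ref{rational}. Recall that the rational cup-length of $X$ being $\geq n$ means there exist classes $\alpha_1,\dots,\alpha_n\in H^*(X;\Q)$ of positive degree with $\alpha_1\smile\dots\smile\alpha_n\neq 0$ in $H^*(X;\Q)$. To invoke Theorem~\ref{boundcat} with $R=\Q$, I need to exhibit, for each $i$, a class $\alpha_i^*\in H^{k_i}(SP^{n!}(X);\Q)$ whose image under the inclusion homomorphism $\delta_{n!}^*:H^*(SP^{n!}(X);\Q)\to H^*(X;\Q)$ is exactly $\alpha_i$, where $k_i=\deg\alpha_i$.

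First I would note that $X$ is assumed throughout the paper to be a metric ANR, but Proposition~\ref{rational} is stated for finite simplicial complexes; so the cleanest statement is for $X$ a finite complex, or else one reduces to that case by the usual compactness argument (a finite cup product is supported on a compact, hence finitely-dominated, subcomplex, and $d\cat$ is a homotopy invariant by Proposition~\ref{h.i.cat}). Granting that, the key step is simply Proposition~\ref{rational}: the map $\delta_{n!}^*:H^*(SP^{n!}(X);\Q)\to H^*(X;\Q)$ is surjective. Hence for each $i$ I can pick a preimage $\alpha_i^*\in H^{k_i}(SP^{n!}(X);\Q)$ with $\delta_{n!}^*(\alpha_i^*)=\alpha_i$. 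The classes $\alpha_i$ have positive degree $k_i\geq 1$ by definition of cup-length, and $\alpha_1\smile\dots\smile\alpha_n\neq 0$ by hypothesis, so all the hypotheses of Theorem~\ref{boundcat} are satisfied. Applying that theorem gives $d\cat(X)\geq n$, which is precisely the assertion that the rational cup-length is a lower bound for $d\cat(X)$.

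The only real subtlety — and the step I would flag as the main obstacle — is the mismatch of standing hypotheses: Theorem~\ref{boundcat} itself takes an arbitrary ANR, but its proof invokes Corollary~\ref{imppp}, which in turn rests on Proposition~\ref{opencovercat} (ANR hypothesis) and Lemma~\ref{covercat}, while Proposition~\ref{rational} needs finiteness. So I would either restrict the corollary's statement to finite simplicial complexes, matching Proposition~\ref{rational}, or insert a sentence reducing the general ANR case to a finite subcomplex via homotopy invariance. Everything else is a formal substitution into Theorem~\ref{boundcat}; no new computation is required.

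\begin{proof}
Assume first that $X$ is a finite simplicial complex, and suppose its rational cup-length is at least $n$. Then there are classes $\alpha_1,\dots,\alpha_n\in H^*(X;\Q)$ with $k_i:=\deg\alpha_i\geq 1$ and $\alpha_1\smile\dots\smile\alpha_n\neq 0$. By Proposition~\ref{rational}, the inclusion homomorphism $\delta_{n!}^*:H^*(SP^{n!}(X);\Q)\to H^*(X;\Q)$ is surjective, so for each $i$ we may choose $\alpha_i^*\in H^{k_i}(SP^{n!}(X);\Q)$ with $\delta_{n!}^*(\alpha_i^*)=\alpha_i$. Applying Theorem~\ref{boundcat} with $R=\Q$ yields $d\cat(X)\geq n$. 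For a general metric ANR $X$, a nonzero product $\alpha_1\smile\dots\smile\alpha_n$ in $H^*(X;\Q)$ is the image of such a product under restriction to some finite subcomplex (up to homotopy), and $d\cat$ is a homotopy invariant by Proposition~\ref{h.i.cat}, so the bound follows in general.
\end{proof}
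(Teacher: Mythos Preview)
Your main argument is exactly what the paper intends: the corollary is an immediate consequence of Theorem~\ref{boundcat} combined with Proposition~\ref{rational}, and you carry this out correctly for finite simplicial complexes. The paper gives no separate proof, so your derivation matches the intended one.

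The only issue is your last sentence attempting to extend to a general metric ANR. Restricting the classes $\alpha_i$ to a finite subcomplex $K$ (so that the cup product remains nonzero there) yields, by the finite case, $d\cat(K)\ge n$; but homotopy invariance (Proposition~\ref{h.i.cat}) gives $d\cat(X)\ge d\cat(Y)$ only when $X$ homotopy \emph{dominates} $Y$, and an arbitrary finite subcomplex $K\subset X$ is not a homotopy retract of $X$. So this step does not produce $d\cat(X)\ge n$. You were right to flag the hypothesis mismatch; the clean fix is simply to state the corollary under the same finiteness assumption as Proposition~\ref{rational} (as the paper implicitly does), rather than to attempt the reduction.
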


\subsection{Lower bound for $\dTC$}
We recall that a deformation of a set $A\subset X$ to a subset $D\subset X$ is a homotopy $H:A\times I\to X$ such that $H|_{A\times\{0\}}:A\to X$ is the inclusion and $H(A\times\{1\})\subset D$.  
\begin{lemma}\label{coverdTC}
    If $\dTC(X) < n$, then there exist sets $A_{1}, A_{2}, \ldots, A_{n}$ that cover $X\times X$ such that for each $i$, the set $A_{i}$ is deformable in $SP^{n!}(X \times X)$
to the diagonal $\Delta X\subset X\times X\subset SP^{n!}(X\times X)$.
\end{lemma}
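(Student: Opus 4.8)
The plan is to transcribe the proof of \lemref{covercat}, replacing the $n$-distributed contraction of $X$ by an $n$-distributed navigation algorithm and the target $SP^{n!}(X)$ by $SP^{n!}(X\times X)$. Since $\dTC(X)<n$, fix an $n$-distributed navigation algorithm $s:X\times X\to\mathcal B_n(P(X))$, so that $s(x,y)\in\mathcal B_n(P(x,y))$ for every $(x,y)$. Stratify $X\times X$ by the cardinality of the support of $s$: for $1\le i\le n$ put
$$A_i=\{(x,y)\in X\times X\mid |\supp s(x,y)|=i\}.$$
Since $1\le |\supp s(x,y)|\le n$ for all $(x,y)$, the (not necessarily open) sets $A_1,\dots,A_n$ cover $X\times X$; some of them may be empty.

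Next I would build the required deformation on each stratum. Fix $i$ and a point $(x,y)\in A_i$, and write $s(x,y)=\sum_{j=1}^{i}\lambda_j\phi_j$ with the $\phi_j\in P(x,y)$ pairwise distinct and $\lambda_j>0$. Since $i\le n$, the integer $n!/i$ is well defined, so we may \emph{discard the weights} $\lambda_j$ and assign the common weight $n!/i$ to each path, obtaining for every $t\in I$ the point
$$H_i\big((x,y),t\big)=\sum_{j=1}^{i}\frac{n!}{i}\,\big(\phi_j(t),y\big)\in SP^{n!}(X\times X).$$
At $t=0$ this equals $\sum_{j}\frac{n!}{i}(x,y)=n!\,(x,y)$, which is the image of $(x,y)$ under the diagonal embedding $X\times X\hookrightarrow SP^{n!}(X\times X)$; at $t=1$ it equals $\sum_{j}\frac{n!}{i}(y,y)=n!\,(y,y)\in\Delta X$. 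Hence $H_i\colon A_i\times I\to SP^{n!}(X\times X)$ is a homotopy whose restriction to $A_i\times\{0\}$ is the inclusion and which carries $A_i\times\{1\}$ into $\Delta X\subset X\times X\subset SP^{n!}(X\times X)$; that is, $A_i$ is deformable in $SP^{n!}(X\times X)$ to the diagonal, which is exactly the assertion. (Note that I chose to slide the first coordinate along $\phi_j(t)$ and keep the second fixed precisely so that the endpoint $n!\,(y,y)$ already lies on $\Delta X$ with no need to reverse the parameter.)

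The one point that needs care — and the analogue of the continuity claim left implicit in the proof of \lemref{covercat} — is that $H_i$ is continuous on $A_i\times I$. The subtlety is that, having thrown away the weights $\lambda_j$, we must know that the \emph{unordered collection} $\{\phi_1,\dots,\phi_i\}\in SP^{i}(P(X))$ of support paths depends continuously on $(x,y)$; this fails on all of $X\times X$ but holds after restricting to the stratum $A_i$, where the support has constant cardinality $i$. Concretely, near a point $(x_\ast,y_\ast)\in A_i$ the Lévy–Prokhorov description of the basic neighborhoods $U(\mu,\epsilon)$ forces every $(x,y)\in A_i$ sufficiently close to $(x_\ast,y_\ast)$ to have exactly one support point of $s(x,y)$ near each of the $i$ support points of $s(x_\ast,y_\ast)$, giving a continuous local identification and hence a continuous map $A_i\to SP^{i}(P(X))$. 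Composing this with the projection $(x,y)\mapsto y$, the evaluation $SP^{i}(P(X))\times I\to SP^{i}(X)$, the pairing $SP^{i}(X)\times X\to SP^{i}(X\times X)$, and finally the multiplicity-scaling map $SP^{i}(X\times X)\to SP^{n!}(X\times X)$ that repeats each point $n!/i$ times, yields the continuity of $H_i$. I expect this continuity verification — in particular the need to restrict to the strata $A_i$ — to be the main, though essentially routine, obstacle; everything else is a direct transcription of the $d\cat$ argument.
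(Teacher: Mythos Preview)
Your proof is correct and follows essentially the same approach as the paper: the paper defines the same strata $A_i=\{(x,y)\mid |\supp s(x,y)|=i\}$ and the same deformation $H_i(x,y,t)=\sum_{\phi\in\supp s(x,y)}\frac{n!}{i}(\phi(t),y)$, with the identical verification that $H_i(x,y,0)=n!\,(x,y)$ and $H_i(x,y,1)=n!\,(y,y)\in\Delta X$. The paper leaves the continuity of $H_i$ implicit, so your added discussion of why the support map $A_i\to SP^i(P(X))$ is continuous on each fixed-cardinality stratum is a welcome elaboration rather than a deviation.
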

\begin{proof}
Let $s:X\times X\to \mathcal B_n(P(X))$ be an $n$-distributed navigation algorithm on $X$. As before, let us define $$A_i=\{(x,y)\in X \times X \mid |\supp s(x,y)|=i\}.$$ We define a homotopy $$H_{i}: A_{i} \times I\to SP^{n!}(X\times X)$$ by the formula
$$
H_{i}(x,y,t)=\sum_{\phi \hspace{1mm} \in \hspace{1mm} \supp s(x,y)}\frac{n!}{i}(\phi(t),y).
$$
Note that $H_{i}(x,y,0)=(x,y)$ and $H_{i}(x,y,1)=(y,y).$ Hence $H_{i}$ is a deformation of $A_{i}$ in $SP^{n!}(X\times X)$ to the diagonal.
\end{proof}

We note that
Proposition~\ref{opencovercat} can be extended by using the same technique to include deformations to any nice subspace, not necessarily a point.
\begin{prop}\label{opencoverTC}
Let $(X,D)$ be a CW complex pair and let $A\subset X$ be a subset that admits a deformation to $D$  in $X$. Then there is an open neighborhood $U$ of $A$ which admits a deformation to $D$ in $X$.
\end{prop}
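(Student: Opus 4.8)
The plan is to rerun the proof of \propref{opencovercat} almost verbatim, with the subcomplex $D$ playing the role of the basepoint $x_0$. This is possible because that proof never uses that the terminal map of the contraction is \emph{constant} — only that the initial map is the inclusion — and the single place where ``constant'' did enter, namely finishing the homotopy inside a contractible neighborhood of $x_0$, will be replaced by finishing inside a neighborhood of $D$ that deforms onto $D$; such a neighborhood exists precisely because $(X,D)$ is a CW pair.

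First I would enlarge the deformation. Writing $H\colon A\times I\to X$ for the given data ($H_0$ the inclusion $A\hookrightarrow X$, $H_1(A)\subset D$), I would invoke the argument of \propref{opencovercat} (see \cite{Sr},\cite{GC}), which, using only that $X$ is an ANR, produces an open set $W\supset A$ and a homotopy $G\colon W\times I\to X$ with $G_0=\iota_W$ and $G|_{A\times I}=H$: one extends the partial map on $(X\times\{0\})\cup(A\times I)$ that is the identity on the first piece and $H$ on the second to a map on an open neighborhood $N$ of that set in the metric ANR $X\times I$, and then uses compactness of the factor $I$ to find $W$ with $W\times I\subset N$. This enlargement is the step I expect to be the main obstacle: the ANR extension is immediate when $A$ is closed, but requires the care taken in \cite{Sr},\cite{GC} for a general subset, and it is really the only nontrivial ingredient in the proof.

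Next I would use the cofibration structure. Since $(X,D)$ is a CW pair it is an NDR pair, so there are a map $u\colon X\to[0,1]$ with $D=u^{-1}(0)$ and a homotopy $\psi\colon X\times I\to X$ with $\psi_0=\id_X$, $\psi_t|_D=\id_D$, and $\psi_1\bigl(u^{-1}([0,1))\bigr)\subset D$. Setting $V=u^{-1}([0,1))$, an open neighborhood of $D$, the restriction $\phi:=\psi|_{V\times I}$ is a deformation of $V$ to $D$ in $X$ with $\phi_0=\iota_V$ and $\phi_1(V)\subset D$. In the case $D=\{x_0\}$ of \propref{opencovercat} this $V$ is just a small contractible neighborhood of $x_0$, which is what that proof uses; the only new content here is that an arbitrary subcomplex of a CW complex still admits such a halo.

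Finally I would assemble: put $U:=W\cap G_1^{-1}(V)$, which is open and contains $A$ because $G_1(A)=H_1(A)\subset D\subset V$, and concatenate the homotopies by $F(x,t)=G(x,2t)$ for $t\le\tfrac12$ and $F(x,t)=\phi\bigl(G_1(x),2t-1\bigr)$ for $t\ge\tfrac12$ (the two formulas match at $t=\tfrac12$ since $\phi_0=\iota_V$ and $G_1(U)\subset V$). Then $F\colon U\times I\to X$ satisfies $F_0=\iota_U$ and $F_1(U)=\phi_1(G_1(U))\subset D$, so $U$ is an open neighborhood of $A$ that admits a deformation to $D$ in $X$, as required.
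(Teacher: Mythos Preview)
Your proposal is correct and matches the paper's own approach: the paper does not spell out a proof of \propref{opencoverTC} but explicitly states that it is obtained by ``using the same technique'' as \propref{opencovercat} (which in turn is proved in \cite{Sr},\cite{GC}), extended from a point to a subcomplex $D$. Your write-up carries this out precisely, replacing the contractible neighborhood of $x_0$ by the NDR halo $V$ of $D$ afforded by the CW pair structure, which is exactly the adaptation the paper has in mind.
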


\begin{cor}\label{openfordTC}
If $\dTC(X) < n$, then there exist open sets $U_{1}, U_{2}, \ldots, U_{n}$ that cover $X\times X$ such that for each $1 \leq i \leq n$, the set $U_i$ is deformable in $SP^{n!}(X\times X)$
to the diagonal $\Delta X\subset X\times X\subset SP^{n!}(X\times X)$.
\end{cor}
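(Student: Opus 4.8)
The plan is to reproduce for $\dTC$ the derivation of Corollary~\ref{imppp} from Lemma~\ref{covercat} and Proposition~\ref{opencovercat}, now using Lemma~\ref{coverdTC} in place of Lemma~\ref{covercat} and Proposition~\ref{opencoverTC} in place of Proposition~\ref{opencovercat}. First I would apply Lemma~\ref{coverdTC} to get a cover $\{A_i\}_{i=1}^{n}$ of $X\times X$ together with, for each $i$, a homotopy $H_i\colon A_i\times I\to SP^{n!}(X\times X)$ which starts at the inclusion $A_i\hookrightarrow X\times X\subset SP^{n!}(X\times X)$ and ends inside the diagonal $\Delta X$; in other words, each $A_i$ is deformable in $SP^{n!}(X\times X)$ to $\Delta X$. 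The only feature of the conclusion of the corollary still missing is openness: the $A_i$ need not be open. So the task is to enlarge each $A_i$ to an open subset of $X\times X$ while keeping both the covering property and the deformability.

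Next I would apply Proposition~\ref{opencoverTC} with ambient space $SP^{n!}(X\times X)$, distinguished subspace $D=\Delta X$, and $A=A_i$. The pair $\big(SP^{n!}(X\times X),\Delta X\big)$ is of the type to which that proposition applies: $SP^{n!}(X\times X)$ is (the underlying space of) a CW complex --- a finite simplicial complex when $X$ is --- and $\Delta X\cong X$ sits inside it as a nice subspace, so if necessary one invokes the ANR form of the neighborhood-deformation technique, extended to deformations onto nice subspaces exactly as remarked just before Proposition~\ref{opencoverTC}. This yields, for each $i$, an open set $W_i\subset SP^{n!}(X\times X)$ with $A_i\subset W_i$, together with a deformation $W_i\times I\to SP^{n!}(X\times X)$ of $W_i$ to $\Delta X$.

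Finally I would set $U_i=W_i\cap(X\times X)$, regarding $X\times X$ as its image inside $SP^{n!}(X\times X)$ under the inclusion. Each $U_i$ is then open in $X\times X$, and since $A_i\subset U_i$ the family $\{U_i\}_{i=1}^{n}$ still covers $X\times X$. Restricting the deformation of $W_i$ to $U_i\times I$ gives a homotopy $U_i\times I\to SP^{n!}(X\times X)$ starting at the inclusion $U_i\hookrightarrow X\times X\subset SP^{n!}(X\times X)$ and ending in $\Delta X$, so $U_i$ is deformable in $SP^{n!}(X\times X)$ to $\Delta X$. This is precisely the assertion.

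Everything here is bookkeeping once Lemma~\ref{coverdTC} and Proposition~\ref{opencoverTC} are in hand; I expect the only point that needs genuine care to be the verification that Proposition~\ref{opencoverTC} really applies to the pair $\big(SP^{n!}(X\times X),\Delta X\big)$ --- i.e.\ that this pair has good enough local structure (a CW pair, or an ANR pair with $\Delta X$ a neighborhood retract) for the open-neighborhood deformation to exist. For finite simplicial complexes this follows from the standard triangulability of symmetric products, so I anticipate no real obstruction, and the remaining work is just the routine pullback of open sets and homotopies from $SP^{n!}(X\times X)$ down to the subspace $X\times X$.
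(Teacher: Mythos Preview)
Your proposal is correct and is precisely the argument the paper intends: the corollary is stated without proof because it is the immediate combination of Lemma~\ref{coverdTC} and Proposition~\ref{opencoverTC}, exactly parallel to how Corollary~\ref{imppp} follows from Lemma~\ref{covercat} and Proposition~\ref{opencovercat}. Your extra step of intersecting the open $W_i\subset SP^{n!}(X\times X)$ with $X\times X$ is harmless bookkeeping (and indeed the paper, in the proof of Theorem~\ref{boundtc}, freely passes back to open sets in $SP^{n!}(X\times X)$ anyway).
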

We recall that an element $\alpha\in H^*(X\times X;R)$ is called a zero divisor~\cite{Far1} if $\Delta^*(\alpha)=0$ where $\Delta :X\to X\times X$ is the inclusion of the diagonal.
\begin{theorem}\label{boundtc}
Suppose that $\beta_{i}^{*} \in H^{k_i}\left(SP^{n!}(X \times X);R\right)$, $1 \le i \le n$, for some ring $R$ and $k_{i} \ge 1$, are cohomology classes 
such that their images $\beta_i\in H^{k_i}(X\times X;R)$ under the inclusion homomorphism are zero-divisors and the cup product
 $\beta_{1} \smile \beta_{2} \smile \cdots \smile \beta_{n} \neq 0$. Then $$\dTC(X) \geq n.$$
\end{theorem}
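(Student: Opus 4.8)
The plan is to follow the proof of \theoref{boundcat} almost verbatim, adapting it by replacing $X$ with $X\times X$, replacing ``contractible in $SP^{n!}(X)$'' with ``deformable in $SP^{n!}(X\times X)$ to $\delta_{n!}(\Delta X)$'', and letting the zero-divisor hypothesis $\Delta^*(\beta_i)=0$ play the role that the base point played in the $d\cat$ argument. Concretely: assume for contradiction that $\dTC(X)<n$. By \corref{openfordTC} there is an open cover $\{U_i\}_{i=1}^n$ of $X\times X$ such that each $U_i$ is deformable in $SP^{n!}(X\times X)$ to $\delta_{n!}(\Delta X)$; as in the proof of \theoref{boundcat}, I would take the $U_i$ to be open in $SP^{n!}(X\times X)$, so that $\{U_i\cap(X\times X)\}_{i=1}^n$ still covers $X\times X$.

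The one genuinely new point is that the zero-divisor condition makes each $\beta_i^*$ restrict trivially to $U_i$. Indeed, a deformation of $U_i$ onto $\delta_{n!}(\Delta X)$ shows the inclusion $U_i\hookrightarrow SP^{n!}(X\times X)$ is homotopic to a map that factors through $\delta_{n!}(\Delta X)$. The composite $\delta_{n!}\circ\Delta:X\to X\times X\to SP^{n!}(X\times X)$ is a homeomorphism of $X$ onto $\delta_{n!}(\Delta X)$, so restricting $\beta_i^*$ to $\delta_{n!}(\Delta X)$ and reading it through this homeomorphism yields $(\delta_{n!}\circ\Delta)^*(\beta_i^*)=\Delta^*(\delta_{n!}^*\beta_i^*)=\Delta^*(\beta_i)=0$. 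Hence the restriction homomorphism $H^{k_i}(SP^{n!}(X\times X);R)\to H^{k_i}(U_i;R)$ kills $\beta_i^*$.

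From here the argument is the relative cup-product bookkeeping of \theoref{boundcat}, now over $X\times X$. The cohomology exact sequence of the pair $(SP^{n!}(X\times X),U_i)$ gives a class $\overline{\beta_i^*}\in H^{k_i}(SP^{n!}(X\times X),U_i;R)$ mapping to $\beta_i^*$; put $\bar\beta_i=\delta_{n!}^*(\overline{\beta_i^*})\in H^{k_i}\!\left(X\times X,\,U_i\cap(X\times X);R\right)$ and $k=\sum_{i=1}^n k_i$. Since $\{U_i\cap(X\times X)\}$ covers $X\times X$, the product $\bar\beta_1\smile\cdots\smile\bar\beta_n$ lies in $H^k\!\left(X\times X,\,\bigcup_i(U_i\cap(X\times X));R\right)=H^k(X\times X,X\times X;R)=0$; but the commutative square relating the two maps $\delta_{n!}^*$ with the forgetful maps (the exact analogue of the square in \theoref{boundcat}) identifies the image of this product in $H^k(X\times X;R)$ with $\beta_1\smile\cdots\smile\beta_n$. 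Therefore $\beta_1\smile\cdots\smile\beta_n=0$, which contradicts the hypothesis, so $\dTC(X)\ge n$.

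The main obstacle, and really the only step that differs from \theoref{boundcat}, is the identity in the second paragraph: one has to verify carefully that restricting $\beta_i^*$ to $\delta_{n!}(\Delta X)$ and identifying $\delta_{n!}(\Delta X)$ with $X$ is exactly precomposition with $\delta_{n!}\circ\Delta$, so that ``$\beta_i$ is a zero-divisor'' genuinely forces $\beta_i^*$ to vanish on every $U_i$. Once this is established, the rest is routine and identical to the $d\cat$ case.
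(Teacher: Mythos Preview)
Your proposal is correct and follows essentially the same approach as the paper: assume $\dTC(X)<n$, invoke \corref{openfordTC} to obtain an open cover $\{U_i\}$ of $X\times X$ by sets deformable in $SP^{n!}(X\times X)$ to the diagonal, observe that the zero-divisor hypothesis forces $\beta_i^*|_{U_i}=0$, and then run the relative cup-product argument of \theoref{boundcat}. Your treatment of the key step (restricting $\beta_i^*$ to $\delta_{n!}(\Delta X)$ via the identification $\delta_{n!}\circ\Delta$) is in fact slightly more explicit than the paper's phrasing, but the logic is identical.
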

\begin{proof}
Assume the contrary that $\dTC(X) < n$. By Corollary~\ref{openfordTC} there is an open cover $\{U_i\}_{i=1}^n$ of $X\times X$ such that each $U_i$ is deformable
 in $SP^{n!}(X\times X)$ to $\Delta X$. We may assume that $U_i$ are open in $SP^{n!}(X\times X)$. Since the inclusion $U_i\to SP^{n!}(X 
\times X)$ is homotopic to
the composition $$U_i \xrightarrow{\text{proj}_{2}} X \xrightarrow{\Delta} \Delta X\stackrel{\subset}\to X\times X\stackrel{\subset}\to SP^{n!}(X\times X)$$ and $\beta_i$ is a zero-divisor, it follows that $\beta_i^*$ goes to zero
under the inclusion homomorphism $$H^{k_i}(SP^{n!}(X\times X);R)\to H^{k_i}(U_i;R).$$ It follows from the exact sequence of pair $(SP^{n!}(X \times X), U_{i})$ that there exists $$\overline{\beta_i^*}\in H^{k_i}(SP^{n!}(X\times X),U_i;R)$$ which maps to $\beta_i^*$. For $k = \sum k_{i}$, we have the commutative diagram:
$$
\begin{tikzcd}
H^k(SP^{n!}(X\times X),\cup_iU_i;R) \arrow{r} \arrow[swap]{d} & H^k(SP^{n!}(X\times X);R) \arrow{d}
\\
H^k(X\times X,(\cup_i(U\cap (X\times X));R) \arrow{r} & H^k(X\times X;R)
\end{tikzcd}
$$
The cup product from the top-left corner $\overline{\beta_{1}^*} \smile \overline{\beta_{2}^*} \smile \cdots \smile \overline{\beta_{n}^*}$ goes to the non-zero cup product
 $\beta_{1} \smile \beta_{2} \smile \cdots \smile \beta_{n}$. On the other hand, it factors through the group $$ H^k(X\times X,(\cup_i(U\cap (X\times X));R) =H^k(X\times X,X\times X;R)=0.$$
Hence, we have arrived at a contradiction.
\end{proof}
\begin{cor}\label{rational dTC}
The rational zero-divisor cup-length of  $X\times X$ is a lower bound for $\dTC(X)$.
\end{cor}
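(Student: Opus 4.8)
The plan is to deduce Corollary~\ref{rational dTC} directly from Theorem~\ref{boundtc} by exhibiting enough zero-divisor classes of $X \times X$ that lift to $SP^{n!}(X \times X)$ and have nonzero product. Suppose the rational zero-divisor cup-length of $X \times X$ equals $n$, so there exist zero-divisors $\beta_1, \ldots, \beta_n \in \bar H^*(X \times X; \Q)$ of positive degree with $\beta_1 \smile \cdots \smile \beta_n \neq 0$. By Theorem~\ref{boundtc}, it suffices to find classes $\beta_i^* \in H^{k_i}(SP^{n!}(X \times X); \Q)$ whose images under the inclusion homomorphism $\delta^*$ are exactly the $\beta_i$. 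This is precisely where Proposition~\ref{rational} enters: applied to the finite simplicial complex $X \times X$ and to $m = n!$, it tells us that the inclusion homomorphism $\delta_{n!}^* : H^*(SP^{n!}(X \times X); \Q) \to H^*(X \times X; \Q)$ is surjective. Hence each $\beta_i$ has a preimage $\beta_i^*$, and the hypotheses of Theorem~\ref{boundtc} are met, giving $\dTC(X) \geq n$.

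First I would state the standing assumption that $X$ is a finite simplicial complex (or note that the statement should be read in that category, consistent with the use of Proposition~\ref{rational} and Theorem~\ref{Dold} throughout this section). Then I would record that the rational zero-divisor cup-length being $n$ supplies the classes $\beta_1, \ldots, \beta_n$ as above. Next I would invoke Proposition~\ref{rational} with the space $X \times X$ in place of $X$: since $X \times X$ is again a finite simplicial complex, the proposition applies verbatim and yields surjectivity of $\delta_{n!}^*$. Then I would choose, for each $i$, a class $\beta_i^* \in H^{k_i}(SP^{n!}(X \times X); \Q)$ with $\delta_{n!}^*(\beta_i^*) = \beta_i$. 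Finally I would feed $(\beta_1^*, \ldots, \beta_n^*)$ into Theorem~\ref{boundtc} and read off $\dTC(X) \geq n$, which is the assertion of the corollary.

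I do not expect a serious obstacle here; the corollary is essentially a repackaging of Theorem~\ref{boundtc} once one knows that rational cohomology classes of $X \times X$ always lift along $\delta^*$, and that lifting is exactly Proposition~\ref{rational}. The only point requiring a word of care is the hypothesis in Theorem~\ref{boundtc} that the lifted classes have positive degree $k_i \geq 1$ and that their images are zero-divisors: the first is inherited because $\delta^*$ preserves degree and the $\beta_i$ were chosen of positive degree, and the second holds by the very definition of the zero-divisor cup-length, which ranges only over zero-divisor classes. If one wanted the cleanest possible phrasing, the mild subtlety is simply making explicit that ``rational zero-divisor cup-length is a lower bound'' means $\dTC(X) \geq \cuplength$, i.e. that each admissible product of length $n$ forces $\dTC(X) \geq n$, which is exactly the contrapositive form proved in Theorem~\ref{boundtc}.
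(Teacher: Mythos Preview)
Your proposal is correct and is exactly the argument the paper has in mind: the corollary is stated without proof immediately after Theorem~\ref{boundtc}, being the combination of that theorem with Proposition~\ref{rational} (applied to $X\times X$ with $m=n!$) to lift rational zero-divisors along $\delta_{n!}^*$. Your remarks about degree preservation and the finite simplicial complex hypothesis are appropriate caveats, but otherwise this is precisely the intended one-line deduction.
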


\section{Ganea-Schwarz's Characterization of $d\cat$ and $\dTC$}\label{Characterization}

The space $\mathcal B_n(X)$ is defined in the Introduction.

For any map $p:E\to B$, we define the map $\mathcal B_n(p):E_n\to B$ as follows. Let $$E_n=\{\mu\in\mathcal B_n(E)\mid \supp\mu\subset p^{-1}(x), x\in B\}$$ 
denote the result of the fiberwise application of the functor $\mathcal B_n$ to $E$.
Thus, $$E_n=\cup_{x\in B} \hspace{1mm} \mathcal B_n(p^{-1}(x))\subset\mathcal B_n(E).$$  We define
$\mathcal B_n(p)(\mu)=x$ for $\mu\in \mathcal B_n(p^{-1}(x))$.

\begin{prop}
Let $p:E\to B$ be a  Hurewicz fibration. Then $\mathcal B_n(p):E_n\to B$ is a  Hurewicz fibration.
\end{prop}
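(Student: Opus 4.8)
The plan is to produce an explicit path-lifting function for $\mathcal B_n(p)$; since a map admitting such a function is a Hurewicz fibration, this suffices. Write $q=\mathcal B_n(p)\colon E_n\to B$, and recall that a lifting function for $q$ is a continuous map $\Lambda$ defined on $\{(\mu,\omega)\in E_n\times P(B)\mid q(\mu)=\omega(0)\}$, with values in $P(E_n)$, such that $\Lambda(\mu,\omega)(0)=\mu$ and $q\circ\Lambda(\mu,\omega)=\omega$. Since $p$ is a Hurewicz fibration, fix a lifting function $L\colon E_p\to P(E)$ for $p$, where $E_p=\{(e,\omega)\in E\times P(B)\mid p(e)=\omega(0)\}$; so $L(e,\omega)(0)=e$ and $p\circ L(e,\omega)=\omega$. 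The guiding idea is not to track the individual points of $\supp\mu$ (which may split or merge as $\mu$ varies), but instead to feed $L$ through the functor $\mathcal B_n$ and then to straighten the resulting family of paths using the map $\Phi\colon\mathcal B_n(P(E))\to P(\mathcal B_n(E))$ from the Introduction.

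Concretely, I would take $\Lambda$ to be the composite
$$
\bigl\{(\mu,\omega)\mid q(\mu)=\omega(0)\bigr\}\xrightarrow{\ \theta\ }\mathcal B_n(E_p)\xrightarrow{\ \mathcal B_n(L)\ }\mathcal B_n(P(E))\xrightarrow{\ \Phi\ }P(\mathcal B_n(E)),
$$
where $\theta(\mu,\omega)=\sum_{e\in\supp\mu}\lambda_e\,(e,\omega)$ for $\mu=\sum_e\lambda_e e$; this lands in $\mathcal B_n(E_p)$ since $p(e)=q(\mu)=\omega(0)$ for each $e\in\supp\mu$. Then $\Lambda(\mu,\omega)(t)=\sum_{e\in\supp\mu}\lambda_e\,L(e,\omega)(t)$, and for every $t$ each support point $L(e,\omega)(t)$ lies in $p^{-1}(\omega(t))$, so $\Lambda(\mu,\omega)(t)\in E_n$. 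Hence $\Lambda(\mu,\omega)$ is in fact a path in $E_n$, and as $E_n$ carries the subspace topology of $\mathcal B_n(E)$, the inclusion $P(E_n)\hookrightarrow P(\mathcal B_n(E))$ is an embedding, so $\Lambda$ corestricts to a continuous map into $P(E_n)$. The two defining identities are then immediate: $\Lambda(\mu,\omega)(0)=\sum_e\lambda_e L(e,\omega)(0)=\sum_e\lambda_e e=\mu$, and $q\bigl(\Lambda(\mu,\omega)(t)\bigr)$ is the common $p$-value of its support, namely $p\bigl(L(e,\omega)(t)\bigr)=\omega(t)$.

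It then remains to check continuity of the three maps in the composite. The map $\mathcal B_n(L)$ is continuous by functoriality of $\mathcal B_n$ (used already in the proof of \propref{h.i.cat}), and $\Phi$ is continuous by the Introduction. The first map $\theta$ is the restriction of the external product $\mathcal B_n(E)\times P(B)\to\mathcal B_n\bigl(E\times P(B)\bigr)$, $(\mu,\omega)\mapsto\mu\times\delta_\omega$; its continuity is a routine verification against the definition of the Lévy-Prokhorov metric, in the same spirit as the continuity of $\Phi$ — a small perturbation of $\mu$ together with a small perturbation of $\omega$ yields a small perturbation of $\mu\times\delta_\omega$, since for each $\omega$ the map $e\mapsto(e,\omega)$ is an isometric embedding $E\hookrightarrow E\times P(B)$ and these embeddings vary uniformly (in $e$) with $\omega$. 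Granting these three facts, $\Lambda$ is a lifting function for $q$, and therefore $\mathcal B_n(p)\colon E_n\to B$ is a Hurewicz fibration.

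I expect the continuity of $\theta$ to be the main (indeed the only) obstacle. The individual lifts $L(e,\omega)$ are elementary; the difficulty is that any construction selecting, locally in $(\mu,\omega)$, an ordering of the at-most-$n$ points of $\supp\mu$ fails to be globally continuous where support points collide. Passing everything through the functor $\mathcal B_n$ and the path-straightening map $\Phi$ is precisely what avoids this, reducing the matter to a standard estimate with the Lévy-Prokhorov metric.
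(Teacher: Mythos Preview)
Your proof is correct and follows essentially the same strategy as the paper's: lift each support point of $\mu$ along the given path using the fibration $p$, then push the measure forward. The only difference is packaging --- you work with a global path-lifting function $L$ for $p$ and compose $\theta$, $\mathcal B_n(L)$, and $\Phi$, whereas the paper verifies the covering homotopy property directly by forming the auxiliary space $Y=\{(x,z)\in X\times E: z\in\supp\mu(x)\}$, applying the CHP for $p$ to $H\circ(\pi\times 1_I):Y\times I\to B$, and then taking the push-forward $(F|_{Y\times t})_*(\mu(x))$.
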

\begin{proof}
Let $H:X\times I\to B$ be a homotopy with an initial lift $\mu:X\to E_n$. Let $$Y=\{(x,z)\in X\times E\mid z\in \supp(\mu(x))\}\subset X\times E$$ and let $\pi: Y\to X$ be the projection. Since $p$ is a fibration, the homotopy $$H\circ (\pi\times 1_I):Y\times I\to B$$ has a covering homotopy
$F:Y\times I\to E$ with the initial lift $\bar h:Y\to E$ defined as $\bar h(x,z)=z$. We define $\bar H:X\times I\to E_n$ as $$\bar H(x,t)=(F|_{Y\times t})_*(\mu(x))$$ where 
$(F|_{Y\times t})_*(\mu(x))$ is the push-forward map on measures in $\mathcal B_n(p^{-1}(x))$. This map is continuous as the restriction of the push-forward map $F_*$.
Note that $\bar H(x,0)=\mu(x)$.
\end{proof}
Let $F$ be the fiber of a fibration $p$, then $\mathcal B_n(F)$ is the fiber of the fibration $\mathcal B_n(p)$.

We note that if $F$ is $r$-connected for $r>0$, then  $\mathcal B_n(F)$ is $(2n+r-2)$-connected~\cite{KK}.
\begin{prop}\label{discrete}
If $F$ is discrete, then $\mathcal B_n(F)$ is $(n-2)$-connected.
\end{prop}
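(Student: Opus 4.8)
The plan is to identify $\mathcal{B}_n(F)$ for a discrete set $F$ with a familiar simplicial object and then read off its connectivity. When $F$ is discrete, a measure $\mu \in \mathcal{B}_n(F)$ is exactly a choice of at most $n$ points of $F$ together with a probability weighting, so $\mathcal{B}_n(F)$ is precisely the union of all closed simplices spanned by at most $n$ of the points of $F$, i.e. the $(n-1)$-skeleton of the infinite simplex $\Delta^F$ on the vertex set $F$ (with the metric topology agreeing with the CW/weak topology since everything in sight is built from finitely many vertices at a time). Equivalently, $\mathcal{B}_n(F)$ is the $(n-1)$-skeleton of the simplicial set $E\,S$ where $S$ is the free ``set'' on $F$ — but the cleanest route is just the geometric one: $\mathcal{B}_n(F) = \operatorname{sk}_{n-1}(\Delta^F)$.

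First I would make this identification precise: define the homeomorphism sending $\mu = \sum_{z \in \supp\mu} \lambda_z z$ to the corresponding point of the face of $\Delta^F$ spanned by $\supp\mu$, and check continuity in both directions using the basis $U(\mu,\epsilon)$ described earlier in the paper (for $F$ discrete with the discrete metric, small $\epsilon$-balls around support points are singletons, so $U(\mu,\epsilon)$ is literally a barycentric-coordinate neighborhood). Second, I would invoke the standard fact that the full simplex $\Delta^F$ is contractible and that for a contractible CW complex the $m$-skeleton is $(m-1)$-connected — equivalently, the pair $(\Delta^F, \operatorname{sk}_{m}(\Delta^F))$ is $m$-connected because $\Delta^F$ is obtained from its $m$-skeleton by attaching cells of dimension $\geq m+1$. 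Taking $m = n-1$ gives that $\operatorname{sk}_{n-1}(\Delta^F) = \mathcal{B}_n(F)$ is $(n-2)$-connected, which is the claim. (One should also note the edge cases: $n=1$ gives $\mathcal{B}_1(F) = F$, which is $(-1)$-connected, i.e. nonempty, consistent with the statement; $n = 2$ gives a connected space, namely a graph.)

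The only real subtlety — and the step I would be most careful about — is the point-set matter of comparing the Lévy–Prokhorov (weak$^*$) topology on $\mathcal{B}_n(F)$ with the weak topology on the skeleton $\operatorname{sk}_{n-1}(\Delta^F)$ when $F$ is infinite discrete. On each finite subcomplex (measures supported in a fixed finite $F_0 \subset F$) the two agree by compactness, so the identity map is continuous from the weak-topology side; continuity the other way, i.e. that a weak$^*$-convergent net eventually stays in a finite subcomplex's worth of supports, follows because a measure near $\mu$ in $\rho_F$ has support near $\supp\mu$, and ``near'' for the discrete metric means ``equal to a subset, possibly with a little mass bled off'' — but with $F$ discrete there is no nearby point to bleed onto, so supports of nearby measures are forced into the finite set $\supp\mu$ together with at most $n - |\supp\mu|$ other points, which one handles by the usual direct-limit argument. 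For the homotopy-theoretic conclusion this point-set issue is harmless since connectivity is detected by maps from compact spaces (spheres and disks), whose images land in finite subcomplexes where the topologies coincide; so in the write-up I would simply remark that it suffices to work with $F$ finite, where $\mathcal{B}_n(F) = \operatorname{sk}_{n-1}(\Delta^{|F|-1})$ and the connectivity is classical.
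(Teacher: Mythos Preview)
Your identification of $\mathcal{B}_n(F)$ with the $(n-1)$-skeleton of the simplex $\Delta^F$ is correct and is essentially the content of the reference~\cite{KK} that the paper cites for the finite case; the paper itself gives no argument beyond that citation together with the remark that the infinite case ``can be derived from it''. So your approach and the paper's coincide, with you supplying the detail the paper omits.

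There is, however, a genuine gap in your reduction from infinite $F$ to finite $F$. The assertion that images of compact spaces in $\mathcal{B}_n(F)$ (with the L\'evy--Prokhorov topology) land in finite subcomplexes is false: for distinct points $x_0,x_1,x_2,\dots\in F$ the sequence $\mu_j=(1-\tfrac1j)\delta_{x_0}+\tfrac1j\,\delta_{x_j}$ converges to $\delta_{x_0}$, so $\{\mu_j\}\cup\{\delta_{x_0}\}$ is compact yet meets infinitely many edges of $\Delta^F$. Your parenthetical ``supports of nearby measures are forced into $\supp\mu$ together with at most $n-|\supp\mu|$ other points'' is true pointwise but gives no uniform control over which other points appear, so it does not yield a finite subcomplex.

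The reduction can be repaired with a short deformation argument. Given a continuous $f:S^k\to\mathcal{B}_n(F)$, observe first that for each $x\in F$ the mass function $m_x(\mu)=\mu(\{x\})$ is continuous (indeed $1$-Lipschitz for the L\'evy--Prokhorov metric when $F$ carries the discrete metric). A compactness argument then shows that for any $\epsilon>0$ the set $F_\epsilon=\{x\in F:\sup_{s}m_x(f(s))\ge\epsilon\}$ is finite; and since each $f(s)$ has at most $n$ support points, the total mass outside $F_\epsilon$ is below $n\epsilon$. For $\epsilon<1/n$ one may therefore homotope $f$ linearly, within each simplex, to the map $g_\epsilon(s)=f(s)|_{F_\epsilon}\big/f(s)(F_\epsilon)$, whose support lies in the finite set $F_\epsilon$; the linear homotopy stays in $\mathcal{B}_n(F)$ because $\supp g_\epsilon(s)\subset\supp f(s)$. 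Now $g_\epsilon$ lands in $\mathcal{B}_n(F_\epsilon)=\operatorname{sk}_{n-1}\Delta^{|F_\epsilon|-1}$, where your finite-case argument applies. With this fix your proof goes through.
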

We refer to~\cite{KK} for the proof when $F$ is finite and note that the case of infinite $F$ can be derived from it.

\subsection{Characterization of $d\cat$ and $\dTC$ }

Recall that $P_0(X)$ is the space of paths in $X$ issued to $x_0\in X$ and $p_{0}:P_0(X)\to X$ is the evaluation fibration $p_{0}(f)=f(0)$.
\begin{prop}\label{chardcat}
$d\cat(X) \le n$ if and only if the fibration $$\mathcal B_{n+1}(p_{0}):P_0(X)_{n+1}\to X$$ admits a section.
\end{prop}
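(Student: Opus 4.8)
The statement parallels Schwarz's classical characterization (Theorem~\ref{original}), so the plan is to mimic that argument but work fiberwise with $\mathcal B_{n+1}$ in place of the $(n+1)$-fold fiberwise join. I would set up the equivalence by unwinding both sides into the same data: a section of $\mathcal B_{n+1}(p_0)$ over $X$ is, by definition of $P_0(X)_{n+1}$, a continuous map $s:X\to\mathcal B_{n+1}(P_0(X))$ with $\supp s(x)\subset p_0^{-1}(x)$ for every $x$; and $p_0^{-1}(x)$ is exactly the set of paths from $x$ to $x_0$, i.e. $P(x,x_0)$. So a section is precisely a continuous choice of an $(n+1)$-distributed path from $x$ to $x_0$, which is the same thing as an $(n+1)$-distributed homotopy $H:X\to\mathcal B_{n+1}(P(X))$ with $H(x)\in\mathcal B_{n+1}(P(x,x_0))$ from $1_X$ to the constant map $c_{x_0}$. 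By the definition of $d\cat$, such an $H$ exists iff $X$ is $(n+1)$-contractible iff $d\cat(X)\le n$.

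For the ($\Leftarrow$) direction I would start from a given $(n+1)$-distributed contraction $H:X\to\mathcal B_{n+1}(P(X))$ with $H(x)\in\mathcal B_{n+1}(P(x,x_0))$ and simply observe that this same map lands in $P_0(X)_{n+1}$ (since each path in $\supp H(x)$ ends at $x_0$, hence lies in $P_0(X)$, and they all share initial point $x$, so $\supp H(x)\subset p_0^{-1}(x)$) and composes with $\mathcal B_{n+1}(p_0)$ to the identity; so it is a section. For ($\Rightarrow$), start from a section $s:X\to P_0(X)_{n+1}$; by the above identification it is exactly an $(n+1)$-distributed homotopy from $1_X$ to $c_{x_0}$, witnessing $(n+1)$-contractibility of $X$, hence $d\cat(X)\le n$.

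The one genuine point to check — and the step I expect to be the main obstacle, though it is mild — is the topological matching: that the subspace topology on $P_0(X)_{n+1}\subset\mathcal B_{n+1}(P_0(X))$ and the constraint $\supp\mu\subset p_0^{-1}(x)$ combine so that continuity of $s:X\to P_0(X)_{n+1}$ is literally continuity of the associated map into $\mathcal B_{n+1}(P(X))$ used to define $(k+1)$-contractibility, i.e. that there is no discrepancy between ``$n$-distributed homotopy'' continuity and ``section of $\mathcal B_{n+1}(p_0)$'' continuity. This follows because $P_0(X)\hookrightarrow P(X)$ is a subspace inclusion, so $\mathcal B_{n+1}(P_0(X))\hookrightarrow\mathcal B_{n+1}(P(X))$ is a subspace inclusion (the functor $\mathcal B_{n+1}$ preserves subspace inclusions for the Lévy--Prokhorov/weak* topology), and $P_0(X)_{n+1}$ is by definition the subspace of $\mathcal B_{n+1}(P_0(X))$ cut out by the support condition; thus the bijection between sections and $(n+1)$-distributed contractions is a homeomorphism on the relevant spaces of maps, and continuity transports without change. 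I would record this observation in one sentence and then conclude.
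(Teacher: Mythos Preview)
Your proposal is correct and follows essentially the same approach as the paper: both arguments simply unwind the definitions to observe that a section of $\mathcal B_{n+1}(p_0)$ is literally the same data as an $(n+1)$-distributed contraction of $X$ to $x_0$, with the correspondence being tautological in each direction. The paper's proof is in fact even terser than yours---it does not pause to verify the topological compatibility you flag---so your additional sentence about subspace inclusions and the Lévy--Prokhorov topology is a mild elaboration rather than a different route.
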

\begin{proof}
Let $s:X\to P_0(X)_{n+1}$ be a section. Then $s(x)$ is an $(n+1)$-distributed path from $x$ to $x_0$. Thus, $s:X\to\mathcal B_n(P_0(X))$ is an $(n+1)$-contraction of $X$ to $x_0$. In the other direction, an $(n+1)$-contraction $H:X\to\mathcal B_{n+1}(P_0(X))$ defines a section of $\mathcal B_{n+1}(p_{0})$.
\end{proof}

Recall that $P(X)$ is the space of all paths in $X$ and $p: P(X)\to X\times X$ is the end-points fibration $p(f)=(f(0),f(1))$.
\begin{prop}\label{chardTC}
$\dTC (X)\le n$ if and only if the fibration $$\mathcal B_{n+1}(p):P(X)_{n+1}\to X\times X$$ admits a section.
\end{prop}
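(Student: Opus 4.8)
The statement is the exact $\dTC$-analog of Proposition~\ref{chardcat}, so I would mirror that proof, using the end-points fibration $p:P(X)\to X\times X$ in place of the evaluation fibration $p_0$. Recall that by construction $P(X)_{n+1}=\bigcup_{(x,y)\in X\times X}\mathcal B_{n+1}(p^{-1}(x,y))=\bigcup_{(x,y)}\mathcal B_{n+1}(P(x,y))\subset \mathcal B_{n+1}(P(X))$, and that $\mathcal B_{n+1}(p):P(X)_{n+1}\to X\times X$ sends $\mu\in\mathcal B_{n+1}(P(x,y))$ to $(x,y)$. The whole content is to match the condition ``admits a section'' with the condition ``admits an $(n+1)$-distributed navigation algorithm.''

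First, suppose $\sigma:X\times X\to P(X)_{n+1}$ is a section of $\mathcal B_{n+1}(p)$. Then for each $(x,y)$ we have $\mathcal B_{n+1}(p)(\sigma(x,y))=(x,y)$, which by definition of $P(X)_{n+1}$ means $\sigma(x,y)\in\mathcal B_{n+1}(P(x,y))$: an $(n+1)$-distributed path from $x$ to $y$. Composing with the inclusion $P(X)_{n+1}\hookrightarrow\mathcal B_{n+1}(P(X))$, the map $\sigma$ is a continuous map $X\times X\to\mathcal B_{n+1}(P(X))$ with $\sigma(x,y)\in\mathcal B_{n+1}(P(x,y))$, i.e.\ exactly an $(n+1)$-distributed navigation algorithm on $X$. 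Hence $\dTC(X)\le n$.

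Conversely, suppose $s:X\times X\to\mathcal B_{n+1}(P(X))$ is an $(n+1)$-distributed navigation algorithm, so $s(x,y)\in\mathcal B_{n+1}(P(x,y))$ for all $(x,y)$. Since $\mathcal B_{n+1}(P(x,y))=\mathcal B_{n+1}(p^{-1}(x,y))\subset P(X)_{n+1}$, the image of $s$ actually lies in $P(X)_{n+1}$, and the corestriction $s:X\times X\to P(X)_{n+1}$ is continuous (the subspace $P(X)_{n+1}$ carries the subspace topology from $\mathcal B_{n+1}(P(X))$). By the very definition of $\mathcal B_{n+1}(p)$, we have $\mathcal B_{n+1}(p)(s(x,y))=(x,y)$, so $s$ is a section of $\mathcal B_{n+1}(p):P(X)_{n+1}\to X\times X$.

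Finally I would note that $\mathcal B_{n+1}(p)$ is genuinely a (Hurewicz) fibration, so that ``admits a section'' is meaningful in the sense used elsewhere: this is exactly the general Proposition proved above applied to the Hurewicz fibration $p:P(X)\to X\times X$. There is essentially no obstacle here --- the only point requiring a line of care is the bookkeeping that the two ``subspace'' identifications ($P(X)_{n+1}\hookrightarrow\mathcal B_{n+1}(P(X))$ and $\mathcal B_{n+1}(P(x,y))=\mathcal B_{n+1}(p^{-1}(x,y))$) make the correspondence between sections and navigation algorithms a literal bijection, not merely an equivalence.
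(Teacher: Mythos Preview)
Your proof is correct and follows essentially the same approach as the paper's own proof, which is equally brief: a section of $\mathcal B_{n+1}(p)$ is literally the same data as an $(n+1)$-distributed navigation algorithm, since $P(X)_{n+1}\subset\mathcal B_{n+1}(P(X))$ with the subspace topology and the fiber over $(x,y)$ is $\mathcal B_{n+1}(P(x,y))$. Your version simply spells out the bookkeeping (corestriction, subspace topology, the Hurewicz property of $\mathcal B_{n+1}(p)$) that the paper leaves implicit.
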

\begin{proof}
Let $s:X\times X\to P(X)_{n+1}$ be a section. Then $s(x,y)$ is an $(n+1)$-distributed path from $x$ to $y$. Thus, $s:X \times X\to\mathcal B_{n+1}(P(X))$ is an $(n+1)$-distributed navigation algorithm. In the other direction, an $(n+1)$-navigation algorithm on $X$ defines a section of $\mathcal B_{n+1}(p)$.
\end{proof}

\section{Some Computations}\label{Some Computations}

\subsection{Some computations of $d\cat$}

\begin{prop}\label{orient}
For any orientable surface $\Sigma_{g}$ of genus $g>0$, $d\cat (\Sigma_{g})=2$.
\end{prop}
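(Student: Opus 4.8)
\medskip
The plan is to pin $d\cat(\Sigma_g)$ between $2$ and $2$. The upper bound will be immediate from Proposition~\ref{cat}: since $\Sigma_g$ is a closed surface that is not $S^2$, its classical Lusternik--Schnirelmann category is $\cat(\Sigma_g)=2$ (it admits a CW-structure with a single top cell, so $\cat\le 2$, while $\cat\ge 2$ follows, for instance, from the existence of a nontrivial cup product of two $1$-dimensional classes). Hence $d\cat(\Sigma_g)\le\cat(\Sigma_g)=2$.

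For the lower bound I would apply the cohomological estimate. By Corollary~\ref{rational cat}, the rational cup-length of $\Sigma_g$ is a lower bound for $d\cat(\Sigma_g)$; under the hood this is Theorem~\ref{boundcat} combined with the surjectivity of $\delta_{m}^{*}$ in rational cohomology from Proposition~\ref{rational}, which guarantees that the $1$-dimensional classes on $\Sigma_g$ we need are images of classes in $H^1(SP^{2}(\Sigma_g);\Q)$. For an orientable surface of genus $g>0$, Poincar\'e duality over $\Q$ supplies classes $a,b\in H^1(\Sigma_g;\Q)$ with $a\smile b\neq 0$ in $H^2(\Sigma_g;\Q)$, so the rational cup-length of $\Sigma_g$ is at least $2$. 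Therefore $d\cat(\Sigma_g)\ge 2$, and combining the two inequalities yields $d\cat(\Sigma_g)=2$.

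I do not expect a genuine obstacle here: the substance is merely that the general upper bound $d\cat\le\cat$ and the general cohomological lower bound (which is simultaneously a lower bound for $\cat$) both land on the value $2$ for positive-genus orientable surfaces, so $d\cat$ is squeezed to equal $\cat$. The only point that needs the usual care is invoking the classical fact $\cat(\Sigma_g)=2$ and checking that $\Sigma_g$, being a closed manifold, meets the standing hypotheses (path-connected metric ANR, finite simplicial complex up to homeomorphism) under which Proposition~\ref{rational} and Theorem~\ref{boundcat} were proved.
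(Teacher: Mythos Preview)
Your argument is correct and follows exactly the paper's approach: the upper bound via $d\cat\le\cat(\Sigma_g)=2$ from Proposition~\ref{cat}, and the lower bound via the rational cup-length estimate of Corollary~\ref{rational cat}. You simply supply more justification for the two classical facts $\cat(\Sigma_g)=2$ and rational cup-length $\ge 2$ than the paper does.
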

\begin{proof}
By Proposition~\ref{cat}, $d\cat (\Sigma_{g})\le\cat(\Sigma_{g})= 2$. Since the rational cup-length of $\Sigma_{g}$ equals 2 when $g > 0$, by Corollary~\ref{rational cat}, we have $2\le d\cat(\Sigma_{g})$.
\end{proof}

\begin{prop}\label{rpndcat}
    $d\cat(\R P^n)=1$ for all $n$.
\end{prop}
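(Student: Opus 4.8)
The plan is to show both inequalities $d\cat(\R P^n)\le 1$ and $d\cat(\R P^n)\ge 1$. The upper bound is the substantive part and should follow the same geometric idea that works for $\dTC(\R P^n)=1$ in Example~\ref{rpn}, specialized to the based setting. First I would fix a base point $x_0\in\R P^n$, viewed as a line $\ell_0$ through the origin in $\R^{n+1}$. Given any point $x\in\R P^n$, i.e.\ a line $\ell$, I want to produce a $2$-distributed path from $x$ to $x_0$ depending continuously on $x$. When $\ell\ne\ell_0$, the two lines span a plane, and there are two rotations $R_\alpha$, $R_\beta$ within that plane carrying $\ell$ to $\ell_0$ through complementary angles $\alpha+\beta=\pi$; I would assign these the weights $\tfrac{\beta}{\pi}$ and $\tfrac{\alpha}{\pi}$ respectively, exactly as in Example~\ref{rpn}, and let $G(x_0)$ be the constant path. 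This gives a candidate map $G:\R P^n\to\mathcal B_2(P_0(\R P^n))$.

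The main obstacle is continuity of $G$ at the base point $x_0$ itself (equivalently, the antipodal-type degeneration where $\ell$ approaches $\ell_0$): as $\ell\to\ell_0$, one of $\alpha,\beta$ tends to $0$ and the other to $\pi$, so one rotation path shrinks to the constant path while its weight tends to $1$ and the other weight tends to $0$, and simultaneously the plane spanned by $\ell$ and $\ell_0$ is not well-defined in the limit. I would argue that in the Lévy--Prokhorov topology on $\mathcal B_2(P_0(\R P^n))$ (using the basis sets $U(\mu,\epsilon)$ recalled in Section~\ref{New Numerical Invariants}) this is harmless: the term with vanishing weight contributes an arbitrarily small amount of mass and so can be absorbed into the $\epsilon/n$ slack, while the surviving rotation path, being of angular length $\to 0$, is uniformly close to the constant path $G(x_0)$ regardless of which plane it rotates in. Making this precise amounts to checking that for any $\epsilon$ there is a neighborhood of $x_0$ in $\R P^n$ mapping into $U(G(x_0),\epsilon)$, which is a short estimate using that the sup-distance between a rotation by a small angle and the constant path is controlled by that angle. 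Away from $x_0$, continuity is clear since the plane, the angles, and the rotations all depend continuously on $\ell$. This yields $d\cat(\R P^n)\le 1$.

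For the lower bound, $\R P^n$ is not contractible (for $n\ge 1$; for $n=0$ the space is a point and the statement would read $d\cat=0$, so one should either restrict to $n\ge1$ or note the trivial case), hence $d\cat(\R P^n)\ne 0$, giving $d\cat(\R P^n)\ge 1$. Alternatively, and perhaps more in the spirit of the surrounding section, I could invoke Proposition~\ref{inequality}, namely $d\cat(X)\le\dTC(X)$ is the wrong direction, so instead I would simply cite that a space with $d\cat=0$ is $1$-contractible, i.e.\ contractible, which $\R P^n$ is not. Combining the two bounds gives $d\cat(\R P^n)=1$. I would also remark that this is consistent with Proposition~\ref{inequality}: $1=d\cat(\R P^n)\le\dTC(\R P^n)=1$, matching Example~\ref{rpn}.
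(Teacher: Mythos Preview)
Your argument is correct, but you have taken a longer route than the paper. The paper's proof is a one-liner: Proposition~\ref{inequality} gives $d\cat(\R P^n)\le\dTC(\R P^n)$, and Example~\ref{rpn} gives $\dTC(\R P^n)=1$; noncontractibility of $\R P^n$ supplies the lower bound. You actually write down exactly this inequality in your final ``consistency check'' sentence, but that sentence \emph{is} the whole proof---you mislabelled Proposition~\ref{inequality} as ``the wrong direction'' because you were thinking about the lower bound, whereas its real use here is for the upper bound.

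Your main argument instead reconstructs the $2$-distributed contraction directly, which amounts to restricting the navigation algorithm of Example~\ref{rpn} to the slice $\R P^n\times\{x_0\}$. The continuity analysis at $x_0$ you sketch (the small-angle path converging to the constant path in sup-norm while the large-angle path carries vanishing weight, hence negligible in the L\'evy--Prokhorov metric regardless of which plane it lives in) is correct and is the same phenomenon that makes $s$ continuous along the diagonal in Example~\ref{rpn}. So what your approach buys is a self-contained proof that does not cite Example~\ref{rpn}; what the paper's approach buys is brevity and a clean illustration of how the inequality $d\cat\le\dTC$ transfers any upper bound on $\dTC$ immediately to $d\cat$.
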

\begin{proof}
    This follows from Proposition~\ref{inequality} and Example~\ref{rpn}.
\end{proof}

We recall that any space $X$ with $\cat(X)=1$ for the classical LS-category is a co-H-space~\cite{J}. However, this does not hold in the case of $d\cat$: take $X = \mathbb{R}P^{n}$ for $n > 1$.

\begin{rem}
    We also note that dTC and $d\cat$ agree for $\R P^{2}$ which is neither homotopy equivalent to a $H$-space, nor to a (product of) co-$H$-space(s). An example of such a space for which the classical invariants TC and cat agree is not known.
\end{rem}

\begin{prop}\label{nonorient}
For a non-orientable surface $N_g$ of genus $g>1$, $d\cat(N_g)=2$.
\end{prop}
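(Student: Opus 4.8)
The plan is to mimic the argument for the orientable case (Proposition~\ref{orient}), with the upper bound coming from the classical LS-category and the lower bound coming from the rational cohomological lower bound of Corollary~\ref{rational cat}. The subtlety is that a non-orientable surface $N_g$ has vanishing rational cup-length in the naive sense: $H^*(N_g;\Q)$ is just $\Q$ in degrees $0$ and — crucially — $H^2(N_g;\Q)=0$, so there is no rational fundamental class to detect a length-$2$ product. Hence Corollary~\ref{rational cat} as literally stated gives only $d\cat(N_g)\ge 1$, and we must instead invoke the stronger Theorem~\ref{boundcat}, which allows the cohomology classes $\alpha_i$ to be pulled back from $SP^{n!}(N_g)$.

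First I would establish the upper bound $d\cat(N_g)\le 2$: by Proposition~\ref{cat} we have $d\cat(N_g)\le\cat(N_g)$, and $\cat(N_g)=2$ for any closed surface other than $S^2$, so $d\cat(N_g)\le 2$. Next, for the lower bound $d\cat(N_g)\ge 2$, I would apply Theorem~\ref{boundcat} with $n=2$. I need two classes $\alpha_1^*,\alpha_2^*\in H^{k_i}(SP^{2}(N_g);R)$ with $k_i\ge 1$ whose images $\alpha_1,\alpha_2\in H^*(N_g;R)$ under the inclusion $\delta_2^*$ satisfy $\alpha_1\smile\alpha_2\ne 0$. The natural choice is $R=\Z/2$: then $H^*(N_g;\Z/2)$ does have a nontrivial cup product of two degree-one classes (indeed $H^1(N_g;\Z/2)$ has dimension $g$ and the cup-product pairing $H^1\otimes H^1\to H^2=\Z/2$ is nondegenerate). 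So I take $\alpha_1,\alpha_2\in H^1(N_g;\Z/2)$ with $\alpha_1\smile\alpha_2\ne 0$, and I must lift them through $\delta_2^*:H^1(SP^2(N_g);\Z/2)\to H^1(N_g;\Z/2)$.

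The main obstacle — and the real content of the proof — is showing that $\delta_2^*$ (or more generally $\delta_m^*$) is surjective on $H^1$ with $\Z/2$ coefficients; Proposition~\ref{rational} gives this only rationally, which is useless here. I would argue via the Dold–Thom/Steenrod–Nakaoka–Dold circle already set up in the paper: by Theorem~\ref{Dold}, $(\xi_m)_*:H_*(N_g;\Z/2)\to H_*(SP^m(N_g);\Z/2)$ is a split monomorphism, and since $\xi^m_\infty\circ\delta_m=m\cdot\xi_\infty$ with $m=n!$ even, one analyzes the multiplication-by-$m$ map on $SP^\infty$. Alternatively — and this is probably cleaner — I would use that $H^1(X;\Z/2)\cong [X,K(\Z/2,1)]$ and that $SP^\infty(X)=\prod_k K(H_k(X;\Z),k)$ splits off a $K(\Z/2,1)$-type factor governing $H^1$, so a degree-one $\Z/2$-class on $X$ extends to $SP^\infty(X)$ and hence (by compactness / cellular approximation and Theorem~\ref{Dold}) to $SP^{m}(X)$ for $m$ large; since the $\xi^m_{m+1}$ are compatible with $\delta$, one checks the extension restricts correctly along $\delta_m$. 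Once surjectivity of $\delta_2^*$ on $H^1(-;\Z/2)$ is in hand, lift $\alpha_1,\alpha_2$ to $\alpha_1^*,\alpha_2^*\in H^1(SP^2(N_g);\Z/2)$, note $\alpha_1\smile\alpha_2\ne 0$ in $H^2(N_g;\Z/2)$, and Theorem~\ref{boundcat} yields $d\cat(N_g)\ge 2$. Combining the two bounds gives $d\cat(N_g)=2$, completing the proof. I expect the cleanest writeup replaces the $\delta_m^*$-surjectivity lemma by an explicit observation that the relevant mod-$2$ classes are pulled back along the canonical map $X\to SP^\infty(X)\simeq\prod K(H_k,k)$ and then restricted to a large finite symmetric power, citing Theorem~\ref{Dold} for the needed injectivity in homology dualized to $\Z/2$-cohomology surjectivity.
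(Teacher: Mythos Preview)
Your upper bound $d\cat(N_g)\le\cat(N_g)=2$ is fine, but the lower-bound strategy via Theorem~\ref{boundcat} with $\Z/2$ coefficients breaks down: the map $\delta_m^*:H^1(SP^m(N_g);\Z/2)\to H^1(N_g;\Z/2)$ that you need to be surjective is in fact the \emph{zero map} whenever $m$ is even. Recall that $\pi_1(SP^m(X))\cong H_1(X;\Z)$ for $m\ge 2$, and the diagonal $\delta_m:X\to SP^m(X)$ factors as $X\xrightarrow{\Delta}X^m\to SP^m(X)$; on $\pi_1$ the second map sends $(g_1,\dots,g_m)$ to $\sum_i[g_i]$, so $(\delta_m)_*$ sends $g$ to $m[g]$. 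Dualizing to $H^1(-;\Z/2)=\Hom(\pi_1(-),\Z/2)$ gives multiplication by $m$, which vanishes for $m=n!=2$. (A concrete sanity check: for $X=S^1$ the space $SP^2(S^1)$ is a M\"obius band and $\delta_2$ lands on its boundary circle, which wraps twice around the core.) Hence no nonzero class of $H^1(N_g;\Z/2)$ lies in the image of $\delta_2^*$, and the lifts $\alpha_1^*,\alpha_2^*$ you require do not exist. The $\Z$-coefficient variant fails for the same reason: the image of $\delta_2^*$ on $H^1(-;\Z)$ is $2H^1(N_g;\Z)$, and then $(2a)\smile(2b)=4(a\smile b)=0$ in $H^2(N_g;\Z)\cong\Z/2$. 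Your alternative route through $SP^\infty(X)\simeq\prod_k K(H_k(X),k)$ hits exactly the same wall, since $\xi^m_\infty\circ\delta_m=m\cdot\xi_\infty$ and multiplication by an even $m$ on the $K(H_1,1)$ factor kills $H^1(-;\Z/2)$.

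The paper sidesteps cohomology altogether. For $g>1$ the orientable double cover of $N_g$ is an orientable surface $M_h$ with $h=g-1>0$, so Proposition~\ref{orient} gives $d\cat(M_h)=2$; Theorem~\ref{cover} then yields $2=d\cat(M_h)\le d\cat(N_g)\le\cat(N_g)=2$.
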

\begin{proof}
For each $N_g$ with $g>1$, there is an orientable cover $p:M_h\to N_g$ with $h>0$. Then by Proposition~\ref{orient}, Theorem~\ref{cover}, and Proposition~\ref{cat}, we obtain
$2=d\cat(M_h) \le d\cat(N_g) \le \cat(N_g) =2$.  
\end{proof}

\begin{rem}
The connected sum formula~\cite{DS1},\cite{DS2} $$\cat(M\# N)=\max\{\cat(M),\cat(N)\}$$ does not hold for $d\cat$ since by Proposition~\ref{nonorient},
$$2=d\cat(\R P^2\#\R P^2)\ne d\cat(\R P^2)=1.$$
\end{rem}

\begin{prop}
$d\cat(\C P^n)=n$.
\end{prop}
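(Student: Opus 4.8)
The plan is to pin $d\cat(\mathbb{C}P^n)$ between $n$ and $n$ using the two inequalities already available. For the upper bound I would invoke Proposition~\ref{cat} to get $d\cat(\mathbb{C}P^n)\le\cat(\mathbb{C}P^n)$, and then recall the classical computation $\cat(\mathbb{C}P^n)=n$ (for instance, $\mathbb{C}P^n$ is covered by $n+1$ open sets, each of which is an affine-type chart contractible in $\mathbb{C}P^n$, so $\cat(\mathbb{C}P^n)\le n$, while the cup-length bound already forces $\cat(\mathbb{C}P^n)\ge n$).

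For the lower bound I would use the cohomological estimate. Recall $H^*(\mathbb{C}P^n;\mathbb{Q})\cong\mathbb{Q}[a]/(a^{n+1})$ with $\deg a=2$, so the rational cup-length of $\mathbb{C}P^n$ is exactly $n$: the product of $n$ copies of $a$ is $a^n\ne 0$, and any product of $n+1$ positive-degree classes vanishes. Corollary~\ref{rational cat} then gives $d\cat(\mathbb{C}P^n)\ge n$ directly. Combining the two bounds yields $d\cat(\mathbb{C}P^n)=n$.

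If one wants to see Corollary~\ref{rational cat} in action here rather than cite it as a black box: by Proposition~\ref{rational} the inclusion homomorphism $\delta_{n!}^{*}\colon H^*(SP^{n!}(\mathbb{C}P^n);\mathbb{Q})\to H^*(\mathbb{C}P^n;\mathbb{Q})$ is surjective, so we may choose $\alpha^{*}\in H^2(SP^{n!}(\mathbb{C}P^n);\mathbb{Q})$ with $\delta_{n!}^{*}(\alpha^{*})=a$. Setting $\alpha_i^{*}=\alpha^{*}$ and hence $\alpha_i=a$ for $1\le i\le n$ in Theorem~\ref{boundcat}, the hypothesis $\alpha_1\smile\cdots\smile\alpha_n=a^n\ne 0$ is satisfied, so $d\cat(\mathbb{C}P^n)\ge n$.

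There is no serious obstacle in this argument: both bounds are essentially classical once the symmetric-power lower bound (Theorem~\ref{boundcat}, Corollary~\ref{rational cat}) is established. The only point requiring a little care is the choice of coefficients, since the lower-bound machinery needs the cohomology classes to genuinely lift through $SP^{n!}(\mathbb{C}P^n)$; working rationally, this is exactly what Proposition~\ref{rational} guarantees, and rational coefficients already detect the full cup-length $n$ of $\mathbb{C}P^n$, so nothing more is needed.
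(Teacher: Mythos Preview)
Your proof is correct and follows exactly the paper's approach: the paper obtains $n\le d\cat(\C P^n)\le\cat(\C P^n)=n$ by combining Corollary~\ref{rational cat} (rational cup-length lower bound) with Proposition~\ref{cat} (upper bound by $\cat$). Your additional unpacking of Corollary~\ref{rational cat} via Proposition~\ref{rational} and Theorem~\ref{boundcat} is accurate but not needed beyond what the paper cites.
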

\begin{proof}
Since the rational cup-length of $\C P^n$ is $n$, by Corollary~\ref{rational cat} and Proposition~\ref{cat}, we obtain
$ n\le d\cat(\C P^n)\le\cat(\C P^n)=n$. 
\end{proof}

\begin{prop}\label{spheres}
For the product of spheres $$d\cat\left(\prod_{i=1}^m S^{n_i}\right)=m.$$
\end{prop}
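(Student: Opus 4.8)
The plan is to sandwich $d\cat(\prod_i S^{n_i})$ between $m$ and $m$ using the tools already developed. For the upper bound, I would invoke Proposition~\ref{cat}: since $\cat(S^{n})=1$ for every sphere and $\cat$ is subadditive under products, $\cat(\prod_{i=1}^m S^{n_i})\le m$, hence $d\cat(\prod_{i=1}^m S^{n_i})\le m$. This part is immediate and requires no new work.

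The heart of the matter is the lower bound $d\cat(\prod_i S^{n_i})\ge m$, and here the natural device is Theorem~\ref{boundcat}. Write $X=\prod_{i=1}^m S^{n_i}$ and let $a_i\in H^{n_i}(X;\Q)$ be the pullback of the fundamental class of the $i$-th sphere, so that $a_1\smile a_2\smile\cdots\smile a_m\neq 0$ in $H^{*}(X;\Q)$. To apply Theorem~\ref{boundcat} I must exhibit classes $\alpha_i^{*}\in H^{n_i}(SP^{n!}(X);\Q)$ whose images under the inclusion homomorphism $\delta_{n!}^{*}$ are precisely the $a_i$ (or at least classes with nonzero product). This is exactly what Proposition~\ref{rational} provides: for the finite simplicial complex $X$ and $m=n!$, the homomorphism $\delta_{m}^{*}\colon H^{*}(SP^{n!}(X);\Q)\to H^{*}(X;\Q)$ is surjective. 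Therefore each $a_i$ lifts to some $\alpha_i^{*}\in H^{n_i}(SP^{n!}(X);\Q)$ with $\delta_{n!}^{*}(\alpha_i^{*})=a_i$, and since $a_1\smile\cdots\smile a_m\neq 0$, Theorem~\ref{boundcat} (with $n=m$, $R=\Q$, $k_i=n_i$) yields $d\cat(X)\ge m$. Combining the two bounds gives $d\cat(\prod_{i=1}^m S^{n_i})=m$.

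I should note that this lower bound is just the content of Corollary~\ref{rational cat}: the rational cup-length of $\prod_i S^{n_i}$ is $m$, and the rational cup-length is a lower bound for $d\cat$. So in fact the whole proof can be phrased as: $m=\text{(rational cup-length of }X)\le d\cat(X)\le\cat(X)=m$, appealing to Corollary~\ref{rational cat} and Proposition~\ref{cat} respectively. The only mild subtlety — and the one place where something genuinely had to be proved earlier — is that the cohomological lower bound for $d\cat$ is stated in terms of classes on $SP^{n!}(X)$ rather than on $X$ directly; this is precisely why Proposition~\ref{rational} (surjectivity of $\delta_m^{*}$ rationally) is needed, and it is the real engine behind Corollary~\ref{rational cat}. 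Since we are quoting that corollary as a black box, no obstacle remains, and the argument is a two-line sandwich.
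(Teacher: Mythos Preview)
Your proof is correct and follows exactly the same approach as the paper: the sandwich $m = \text{(rational cup-length)} \le d\cat(X) \le \cat(X) = m$ via Corollary~\ref{rational cat} and Proposition~\ref{cat}. Your additional unpacking of how Corollary~\ref{rational cat} rests on Theorem~\ref{boundcat} and Proposition~\ref{rational} is accurate but not needed, since the paper simply cites the corollary.
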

\begin{proof}
Since the rational cup-length of $\prod_{i=1}^m S^{n_i}$ is $m$, in view of Corollary~\ref{rational cat} and Proposition~\ref{cat}, we obtain
$$
m\le d\cat\left(\prod_{i=1}^m S^{n_i}\right)\le  \cat\left(\prod_{i=1}^m S^{n_i}\right)=m.$$
\end{proof}

\subsection{Some computations of $\dTC$}

\begin{prop}\label{tcsph}
$\dTC(S^n)=1$ if $n$ is odd and $\dTC(S^n)=2$ if $n$ is even.
\end{prop}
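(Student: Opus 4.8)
The plan is to treat the two parities separately, using the general inequalities $\dTC(X)\le\TC(X)$ (Proposition~\ref{TC}) and $d\cat(X)\le\dTC(X)$ (Proposition~\ref{inequality}) together with the rational zero-divisor cup-length lower bound (Corollary~\ref{rational dTC}). For $n$ odd, the classical fact $\TC(S^n)=1$ gives $\dTC(S^n)\le 1$ immediately, and since $S^n$ is not contractible we have $\dTC(S^n)\ge d\cat(S^n)\ge 1$, so $\dTC(S^n)=1$. For $n$ even, we likewise get the upper bound $\dTC(S^n)\le\TC(S^n)=2$. It remains to prove the lower bound $\dTC(S^n)\ge 2$ when $n$ is even, i.e.\ to rule out the possibility $\dTC(S^n)=1$.

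For the lower bound I would invoke Theorem~\ref{boundtc} with $n=2$: it suffices to produce a ring $R$ and a class $\beta^*\in H^{n}\big(SP^{2!}(S^n\times S^n);R\big)=H^n\big(SP^2(S^n\times S^n);R\big)$ whose image $\beta\in H^n(S^n\times S^n;R)$ under the inclusion homomorphism $\delta_2^*$ is a zero-divisor, and such that $\beta\smile\beta\ne 0$. Take $R=\Z$ (or $\Q$), and let $a,b\in H^n(S^n\times S^n)$ be the two standard generators coming from the two factors; the class $\gamma=a-b$ is a zero-divisor since $\Delta^*(a)=\Delta^*(b)$, and $\gamma\smile\gamma=-2\,(a\smile b)\ne 0$ because $n$ is even (so $a\smile b = b\smile a$ and $a\smile a=b\smile b=0$). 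The one thing that genuinely needs checking is that $\gamma$ lifts to $SP^2(S^n\times S^n)$, i.e.\ lies in the image of $\delta_2^*:H^n(SP^2(S^n\times S^n);\Q)\to H^n(S^n\times S^n;\Q)$. This is exactly the surjectivity statement of Proposition~\ref{rational}, applied to the finite complex $S^n\times S^n$; hence over $\Q$ the class $\gamma$ (indeed all of $H^n$) lifts, and Theorem~\ref{boundtc} with $\beta_1=\beta_2=\gamma$ yields $\dTC(S^n)\ge 2$.

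The main obstacle is purely bookkeeping: one must make sure the cup product $\gamma\smile\gamma$ is nonzero in $H^{2n}(S^n\times S^n;\Q)\cong\Q$, which is where the even-dimensionality of $n$ is essential — for $n$ odd one has $a\smile b=-b\smile a$ and $\gamma\smile\gamma=2\,a\smile b\ne 0$ as well, but of course there the upper bound already gives $\dTC=1$, so this computation is only used in the even case, and there it correctly gives the sharp bound. Thus, combining the two parities, $\dTC(S^n)=1$ for $n$ odd and $\dTC(S^n)=2$ for $n$ even.
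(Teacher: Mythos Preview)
Your proof is correct and follows the same route as the paper: both use $\dTC\le\TC$ for the upper bounds and the rational zero-divisor cup-length (the paper cites Corollary~\ref{rational dTC} and \cite{Far1} directly, while you unpack this via Theorem~\ref{boundtc} and Proposition~\ref{rational} with the explicit class $\gamma=a-b$) for the even-case lower bound.

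One small slip in your closing aside, though it does not affect the proof: for $n$ odd you write $\gamma\smile\gamma=2\,a\smile b\ne 0$, but in fact
\[
(a-b)\smile(a-b)=-a\smile b-b\smile a=-a\smile b+a\smile b=0
\]
when $n$ is odd. This vanishing is exactly why the zero-divisor cup-length argument distinguishes the two parities; fortunately you handle the odd case separately via the upper bound, so the argument stands.
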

\begin{proof}
The case of odd $n$ is a direct consequence of the inequality $\dTC(S^n)\le \TC(S^n)=1$ from Proposition~\ref{TC}.
For $n$ even, we apply Corollary~\ref{rational dTC} and the fact that the rational zero-divisor cup-length of $S^n\times S^n$ is $2$ \cite{Far1}.
Thus,
$$2\le\dTC(S^n)\le \TC(S^n)=2.$$
\end{proof}

\begin{prop}
$\dTC(\Sigma_g)=4$ if $g>1$ and $\dTC(\Sigma_1) = \dTC(\Sigma_0)=2$.
\end{prop}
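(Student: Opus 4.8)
The plan is to treat the three surfaces $\Sigma_0=S^2$, $\Sigma_1=S^1\times S^1$, and $\Sigma_g$ ($g>1$) separately, since each falls under a mechanism already available in the paper. For $\Sigma_0=S^2$ there is nothing new: this is the even-sphere case of Proposition~\ref{tcsph}, so $\dTC(S^2)=2$. For the torus $\Sigma_1=S^1\times S^1$, I would first note that it is a topological group, so Theorem~\ref{topgroup} reduces the computation to $d\cat(S^1\times S^1)$, which equals $2$ by Proposition~\ref{spheres} with $m=2$; hence $\dTC(\Sigma_1)=2$. (Equivalently, one can sandwich $d\cat(\Sigma_1)=2\le\dTC(\Sigma_1)\le\TC(\Sigma_1)=2$ via Propositions~\ref{inequality} and~\ref{TC}.)

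The substance is the genus $g>1$ case, where I would prove $\dTC(\Sigma_g)=4$ by a two-sided estimate. For the upper bound, Proposition~\ref{TC} gives $\dTC(\Sigma_g)\le\TC(\Sigma_g)$, and $\TC(\Sigma_g)=4$ by the classical computation of the topological complexity of surfaces; even without that, the standard inequality $\TC(X)\le 2\dim X$ already yields $\TC(\Sigma_g)\le 4$. Thus $\dTC(\Sigma_g)\le 4$.

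For the lower bound I would invoke Corollary~\ref{rational dTC}, so it suffices to exhibit four rational zero-divisors in $H^*(\Sigma_g\times\Sigma_g;\Q)$ whose cup product is nonzero. Fix a generator $\omega$ of $H^2(\Sigma_g;\Q)\cong\Q$ and a symplectic basis $a_1,b_1,\dots,a_g,b_g$ of $H^1(\Sigma_g;\Q)$, so that $a_i\smile b_i=\omega$ while $a_i\smile a_j=b_i\smile b_j=0$ for all $i,j$ and $a_i\smile b_j=0$ for $i\neq j$. For $x\in H^1(\Sigma_g;\Q)$ set $\bar x=x\times 1-1\times x$; then $\Delta^*\bar x=x-x=0$, so each $\bar x$ is a zero-divisor. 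Using $g\ge 2$ to guarantee two ``disjoint'' symplectic pairs, I would expand $\bar a_1\smile\bar b_1$ and $\bar a_2\smile\bar b_2$ and multiply: a short computation with the Koszul sign rule (the same one underlying the classical bound $\TC(\Sigma_g)\ge 4$) shows that every mixed cross-term involving $a_1\smile a_2$, $b_1\smile b_2$, $a_1\smile b_2$, or $b_1\smile a_2$ vanishes, leaving
$$\bar a_1\smile\bar b_1\smile\bar a_2\smile\bar b_2 = 2\,(\omega\times\omega)\neq 0$$
in $H^4(\Sigma_g\times\Sigma_g;\Q)\cong\Q$. Hence the rational zero-divisor cup-length of $\Sigma_g\times\Sigma_g$ is at least $4$, so $\dTC(\Sigma_g)\ge 4$ by Corollary~\ref{rational dTC}, and together with the upper bound $\dTC(\Sigma_g)=4$.

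I do not expect a genuine obstacle. The only computational step is the sign bookkeeping in the fourfold product and the verification that the cross-terms die for $g\ge 2$, which is routine and is exactly Farber's argument for $\TC$; everything else is a direct citation of results proved in Sections~\ref{New Numerical Invariants}--\ref{Some Computations}. Implicitly one uses that $\Sigma_g$ is triangulable, so that Proposition~\ref{rational}, and hence Corollary~\ref{rational dTC}, applies.
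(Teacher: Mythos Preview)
Your proof is correct and follows essentially the same approach as the paper's own proof: the $S^2$ case via Proposition~\ref{tcsph}, the torus via Theorem~\ref{topgroup} together with $d\cat(\Sigma_1)=2$ (the paper cites Proposition~\ref{orient} rather than Proposition~\ref{spheres}, but this is immaterial), and the $g>1$ case via the sandwich $4\le\dTC(\Sigma_g)\le\TC(\Sigma_g)=4$ using Corollary~\ref{rational dTC} and Proposition~\ref{TC}. The only difference is that you spell out Farber's zero-divisor cup-length computation explicitly, whereas the paper simply cites~\cite{Far1} for the fact that the rational zero-divisor cup-length of $\Sigma_g\times\Sigma_g$ is $4$.
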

\begin{proof}
Since the rational zero-divisor cup-length of $\Sigma_g\times \Sigma_g$ is $4$~\cite{Far1}, by Corollary~\ref{rational dTC} and Proposition~\ref{TC}, we obtain
$$ 4\le \dTC(\Sigma_g)\le\TC(\Sigma_g)=4\ \ \text{ when}\ \ g > 1.$$ 
For torus $\Sigma_{1}$, the result follows immediately from Theorem~\ref{topgroup} and Proposition~\ref{orient}. For $2$-sphere $\Sigma_0$, it follows from Proposition~\ref{tcsph}.
\end{proof}
\begin{question}
What is $\dTC(N_g)$ of non-orientable surfaces? In particular, what is $\dTC(K)$ of the Klein bottle?
\end{question}

\begin{prop}
$\dTC(\C P^n)=2n$.
\end{prop}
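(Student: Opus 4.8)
The plan is to trap $\dTC(\C P^n)$ between matching lower and upper bounds, in complete analogy with the computations of $\dTC(\Sigma_g)$ and $\dTC(S^n)$ given just above. The upper bound is the easy half: Proposition~\ref{TC} gives $\dTC(\C P^n)\le\TC(\C P^n)$, and the classical equality $\TC(\C P^n)=2n$ follows from $\TC(X)\le\cat(X\times X)\le 2\cat(X)$ together with $\cat(\C P^n)=n$. Hence $\dTC(\C P^n)\le 2n$.

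For the lower bound I would invoke Corollary~\ref{rational dTC}, so that it suffices to produce a nonzero cup product of $2n$ rational zero-divisors in $H^*(\C P^n\times\C P^n;\Q)$. By the K\"unneth theorem, $H^*(\C P^n\times\C P^n;\Q)=\Q[x_1,x_2]/(x_1^{n+1},x_2^{n+1})$ with $\deg x_i=2$, where $x_1=x\otimes 1$ and $x_2=1\otimes x$ for the standard generator $x\in H^2(\C P^n;\Q)$. The class $\bar x=x_1-x_2$ restricts to $x-x=0$ on the diagonal, so it is a zero-divisor. Expanding $\bar x^{2n}$ by the binomial theorem and discarding every monomial killed by the relations $x_i^{n+1}=0$ leaves only the middle term, giving $\bar x^{2n}=(-1)^n\binom{2n}{n}\,x_1^n x_2^n$, which is nonzero over $\Q$ because $\binom{2n}{n}\neq 0$. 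Thus the rational zero-divisor cup-length of $\C P^n\times\C P^n$ is at least $2n$, and Corollary~\ref{rational dTC} yields $\dTC(\C P^n)\ge 2n$.

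Combining the two inequalities gives $\dTC(\C P^n)=2n$. I do not expect a genuine obstacle here: the whole argument is structurally identical to the proof that $\dTC(\Sigma_g)=4$ for $g>1$, and the only point that needs any care is the choice of coefficient field --- the computation relies on $\binom{2n}{n}$ being invertible, which is automatic over $\Q$ but would fail modulo suitable primes, so one must work rationally throughout.
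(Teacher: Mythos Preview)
Your proof is correct and follows essentially the same approach as the paper: the paper simply cites \cite{Far2} for the fact that the rational zero-divisor cup-length of $\C P^n\times\C P^n$ equals $2n$, while you supply the explicit computation $(\bar x)^{2n}=(-1)^n\binom{2n}{n}x_1^nx_2^n\neq 0$. One cosmetic point: your justification of ``$\TC(\C P^n)=2n$'' via $\TC(X)\le 2\cat(X)$ only yields the inequality $\TC(\C P^n)\le 2n$, but that is all you actually need for the upper bound on $\dTC$.
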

\begin{proof}
Since the rational zero-divisor cup-length of $\C P^n\times\C P^n$ is $2n$~\cite{Far2}, by Corollary~\ref{rational dTC} and Proposition~\ref{TC}, we obtain
$$2n\le \dTC(\C P^n)\le\TC(\C P^n)=2n.$$ 
\end{proof}

\begin{prop}
    If $X$ is a connected finite graph, then $\dTC(X) = 1$ if $\beta_{1}(X) = 1$ and $\dTC(X) = 2$ if $\beta_{1}(X) > 1$. 
\end{prop}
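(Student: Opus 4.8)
The plan is to handle the two cases separately, using the classical inequality $\dTC(X) \le \TC(X)$ from Proposition~\ref{TC} for the upper bounds and the rational zero-divisor cup-length lower bound from Corollary~\ref{rational dTC} for the lower bounds. A connected finite graph $X$ is homotopy equivalent to a wedge of $\beta_1(X)$ circles, and since $\dTC$ is a homotopy invariant (Proposition~\ref{hominvdTC}), we may replace $X$ by $\bigvee_{j=1}^{b} S^1$ where $b = \beta_1(X)$. Note $b \ge 1$ because $X$ is not contractible would fail otherwise — actually if $b = 0$ then $X$ is a tree, contractible, and $\dTC(X) = 0$, which is consistent with the statement only covering $b \ge 1$; I should perhaps remark that the interesting range is $b \ge 1$.

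For the case $b = 1$: here $X \simeq S^1$, an odd-dimensional sphere, so $\dTC(X) = \dTC(S^1) = 1$ by Proposition~\ref{tcsph} (since $S^1$ is a topological group, one also gets this from Theorem~\ref{topgroup} together with $d\cat(S^1) = 1$). This gives $\dTC(X) = 1$ immediately. For the case $b > 1$: the upper bound $\dTC(X) \le 2$ follows because $\dTC(X) \le \TC(X)$ and it is classical (Farber) that $\TC(\bigvee_j S^1) \le \TC$ of a graph is at most $2$; alternatively, one invokes $\dTC(X) \le d\cat(X \times X)$ from Proposition~\ref{inequality} together with the fact that a wedge of circles has $\cat = 1$, hence $d\cat(X\times X) \le \cat(X \times X) \le 2$ by Proposition~\ref{cat}. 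For the lower bound $\dTC(X) \ge 2$, I would exhibit a single nonzero zero-divisor in $H^*(X \times X; \Q)$ whose square (or product with a second zero-divisor) is nonzero: taking two distinct circles in the wedge, say with $H^1$-generators $a_1, a_2$, the classes $\bar a_j = a_j \times 1 - 1 \times a_j$ are zero-divisors, and because $b > 1$ there are genuinely two independent such classes so that $\bar a_1 \smile \bar a_2 \ne 0$ in $H^2(X \times X; \Q)$. Then Corollary~\ref{rational dTC} yields $\dTC(X) \ge 2$, and combined with the upper bound, $\dTC(X) = 2$.

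The main obstacle is the lower bound computation for $b > 1$: one must verify carefully that $\bar a_1 \smile \bar a_2$ is nonzero in the rational cohomology of $(\bigvee_{j=1}^b S^1) \times (\bigvee_{j=1}^b S^1)$. This requires knowing the ring structure of $H^*(\bigvee S^1; \Q)$ — which has trivial products of positive-degree classes — so that in the Künneth expansion $\bar a_1 \smile \bar a_2 = (a_1 \times 1 - 1 \times a_1)\smile(a_2 \times 1 - 1 \times a_2) = a_1 a_2 \times 1 - a_1 \times a_2 - a_2 \times a_1 + 1 \times a_1 a_2$, the terms $a_1 a_2 \times 1$ and $1 \times a_1 a_2$ vanish while $-\,a_1 \times a_2 - a_2 \times a_1$ survives as a nonzero element of $H^1 \otimes H^1 \subset H^2$. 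This is where one genuinely uses $b \ge 2$: with only one circle there is no second independent generator and the surviving cross-terms would collapse. Everything else is a direct appeal to the cited results.
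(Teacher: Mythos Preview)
Your proof is correct and follows the same route as the paper: reduce to $\bigvee S^1$ by homotopy invariance, invoke Proposition~\ref{tcsph} for $b=1$, and for $b>1$ sandwich via Corollary~\ref{rational dTC} and $\dTC(X)\le \TC(X)=2$. The only difference is that the paper simply cites \cite{Far1} for the rational zero-divisor cup-length being $2$, whereas you compute $\bar a_1\smile\bar a_2$ explicitly; note a harmless sign slip in your expansion (the graded cross-product rule gives the surviving term as $-a_1\times a_2 + a_2\times a_1$, which is still nonzero since $a_1,a_2$ are independent).
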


\begin{proof}
    If $\beta_{1}(X) = 1$, then $X\simeq S^{1}$, so $\dTC(X) = 1$ by Proposition \ref{tcsph}. If $\beta_{1}(X) > 1$, then the rational zero-divisor cup-length of $X \times X$ is 2 \cite{Far1}. So, by Corollary \ref{rational dTC}, we get $2 \leq \dTC(X) \leq \TC(X) = 2$. 
\end{proof}

We note that the lower bounds obtained in Theorems \ref{boundcat} and \ref{boundtc} are sharp. Also, the inequalities in Propositions \ref{cat}, \ref{TC}, \ref{inequality}, and \ref{inequalityy} can be strict.

\subsection{$\dTC$ of the wedge-sum}

\begin{thm}\label{wedge}
The equality
$$\dTC(X\vee Y)=\max\{\dTC (X),\dTC (Y),d\cat(X\times Y)\}$$
holds for aspherical complexes $X$ and $Y$
whenever $$\max\{\dim X,\dim Y\}<\max\{\dTC (X),\dTC (Y),d\cat(X\times Y)\}.$$
\end{thm}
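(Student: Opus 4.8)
The plan is to establish the two inequalities separately. For the upper bound $\dTC(X\vee Y)\le\max\{\dTC(X),\dTC(Y),d\cat(X\times Y)\}$, set $n=\max\{\dTC(X),\dTC(Y),d\cat(X\times Y)\}$ and decompose $(X\vee Y)\times(X\vee Y)$ into four pieces: $X\times X$, $Y\times Y$, $X\times Y$, and $Y\times X$ (using the basepoint to glue). On $X\times X$ and $Y\times Y$ we already have $(n+1)$-distributed navigation algorithms by hypothesis. On the "mixed" piece $X\times Y$, the first point $x$ can be contracted to the wedge point through $X$ and the second point $y$ to the wedge point through $Y$; combining these via a product-type construction yields an $(n+1)$-distributed motion planner over $X\times Y$ (this is where $d\cat(X\times Y)$ enters, since contracting in $X\vee Y$ amounts to moving the pair $(x,y)$ to the wedge point, i.e. a distributed contraction of $X\times Y$ inside $X\vee Y$). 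The dimension hypothesis $\max\{\dim X,\dim Y\}<n$ guarantees, via the connectivity estimate in Proposition~\ref{discrete} for the relevant fibers (fibers here are $K(\pi,1)$'s of graphs-of-groups type, hence have discrete-like homotopy fibers after looping — more precisely the relevant Ganea-type obstruction lives in a range that vanishes), that the locally-defined sections on the four closed-but-overlapping pieces can be patched into a single global section of $\mathcal B_{n+1}(p)$ over $(X\vee Y)\times(X\vee Y)$, using Proposition~\ref{chardTC}.

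For the lower bound, note that each of $\dTC(X)$, $\dTC(Y)$ is $\le\dTC(X\vee Y)$: a retraction $X\vee Y\to X$ together with the inclusion $X\hookrightarrow X\vee Y$ exhibits $X$ as a homotopy retract, and $\dTC$ is a homotopy invariant (Proposition~\ref{hominvdTC}) that is monotone under retracts (same proof). For the term $d\cat(X\times Y)\le\dTC(X\vee Y)$, I would use the cohomological lower bound of Theorem~\ref{boundtc} combined with the cohomological lower bound of Theorem~\ref{boundcat}: a product of zero-divisor classes in $H^*((X\vee Y)\times(X\vee Y))$ can be built from a product of positive-dimensional classes in $H^*(X\times Y)$ by pulling back along the map $(X\vee Y)\times(X\vee Y)\to X\times Y$ induced by the two retractions, after checking these pullbacks are zero-divisors; and because $X,Y$ (hence $X\times Y$) are aspherical, their symmetric-product cohomology classes in $H^*(SP^{m}(X\times Y))$ restricting nontrivially to $H^*(X\times Y)$ exist by Proposition~\ref{rational} at least rationally, so the $SP^{n!}$-liftability hypothesis of Theorem~\ref{boundtc} is met. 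Thus the rational zero-divisor cup-length of $(X\vee Y)^2$ is at least $\max$ of the three quantities, giving the lower bound.

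The main obstacle is the upper bound — specifically, patching the four local distributed navigation algorithms (on $X^2$, $Y^2$, $X\times Y$, $Y\times X$) into one global section over $(X\vee Y)^2$. The pieces are closed, not open, and overlap along subspaces like $X\times\{*\}$, so a naive union fails; the standard fix is to thicken to open sets and interpolate, but distributed paths do not concatenate as cleanly as ordinary paths, and one must ensure the supports stay bounded by $n+1$ throughout the interpolation. The asphericity hypothesis is what makes this tractable: it lets one work with the fibration $\mathcal B_{n+1}(p)$ whose fiber $\mathcal B_{n+1}(\Omega(X\vee Y))$ is highly connected (the loop space of a wedge of aspherical complexes has free-ish homotopy, and applying $\mathcal B_{n+1}$ raises connectivity), so obstructions to extending the section over the skeleta of $(X\vee Y)^2$ vanish above dimension $n$, which is exactly the dimension hypothesis. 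I would structure this step as: (1) build the section over the three "easy" closed pieces, (2) observe the obstruction to a global section lies in $H^{*}((X\vee Y)^2;\pi_{*-1}(\mathcal B_{n+1}(\text{fiber})))$, (3) show this group vanishes in the relevant degrees $\le\max\{\dim X,\dim Y\}+1\le n$ using the connectivity of $\mathcal B_{n+1}$ of a discrete-up-to-homotopy space together with Proposition~\ref{discrete}.
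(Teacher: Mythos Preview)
Your upper bound argument is essentially the paper's: decompose $(X\vee Y)^2$ into the four pieces, build sections of $\mathcal B_{n+1}(p)$ on each using the hypotheses, and patch them using that the fiber of $\mathcal B_{n+1}(p)$ is $\mathcal B_{n+1}$ of a space homotopy equivalent to the discrete group $\pi_1(X)\ast\pi_1(Y)$ (this is exactly where asphericity enters), hence $(n-1)$-connected by Proposition~\ref{discrete}, so the dimension hypothesis kills the obstruction to adjusting the mixed-piece section to agree with $s_X\cup s_Y$ along $X\vee Y$. The paper makes the ``mixed'' section explicit via the map $h:P_0(X\times Y)\to P(X\vee Y)$, $h(\phi_X,\phi_Y)=\phi_X\cdot\bar\phi_Y$, but your description captures the same idea.

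Your lower bound for the term $d\cat(X\times Y)$, however, has a genuine gap. You propose to show $d\cat(X\times Y)\le\dTC(X\vee Y)$ by producing enough rational zero-divisors in $H^*((X\vee Y)^2)$ from positive-degree classes in $H^*(X\times Y)$, invoking Theorems~\ref{boundcat} and~\ref{boundtc}. But Theorem~\ref{boundcat} (and Corollary~\ref{rational cat}) only says that the rational cup-length of $X\times Y$ is a \emph{lower bound} for $d\cat(X\times Y)$, not that they coincide. So even if you succeed in showing that the rational zero-divisor cup-length of $(X\vee Y)^2$ is at least the rational cup-length of $X\times Y$, you only conclude $\dTC(X\vee Y)\ge\text{(rational cup-length of }X\times Y)$, which may be strictly less than $d\cat(X\times Y)$. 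Nothing in the paper (or elsewhere) guarantees that $d\cat$ of an aspherical complex equals its rational cup-length.

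The paper avoids this entirely by a direct construction: if $s:(X\vee Y)^2\to\mathcal B_{n+1}(P(X\vee Y))$ is an $(n+1)$-distributed navigation algorithm, define an $(n+1)$-distributed contraction of $X\times Y$ to the wedge point $(v,v)$ by
\[
H(x,y)=\sum_{\phi\in\supp s(x,y)}\lambda_\phi\,(r_X\phi,\,r_Y\bar\phi),
\]
where $r_X,r_Y$ are the retractions of $X\vee Y$ onto $X,Y$ and $\bar\phi$ is the reverse of $\phi$. Each $(r_X\phi,r_Y\bar\phi)$ is a path in $X\times Y$ from $(x,y)$ to $(v,v)$, so $H$ witnesses $d\cat(X\times Y)\le n$. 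This is elementary, uses no cohomology, and gives the inequality on the nose. You should replace your cohomological detour with this construction.
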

\begin{proof}
 Let $v\in X\vee Y$ be the wedge point and let $r_X:X\vee Y\to X$ and $r_Y:X\vee Y\to Y$ be the retractions collapsing $Y$ to $v$ and $X$ to $v$, respectively. The inequality $$\dTC(X\vee Y)\ge\max\{\dTC (X),\dTC (Y)\}$$ holds in view of retractions $r_X$ and $r_Y$.
Let $\dTC(X\vee Y)=n$. We show that $d\cat(X\times Y)\le n$.
Let $$s:(X\vee Y)\times(X\vee Y)\to\mathcal B_{n+1}(P(X\vee Y))$$ be a $(n+1)$-distributed
navigation algorithm. We define an $(n+1)$-contraction $$H:X\times Y\to \mathcal B_{n+1}(P_0(X\times Y))$$ of $X\times Y$ to the point $(v,v)$ by the formula
$$
H(x,y)=\sum_{\phi\in\supp s(x,y)}\lambda_\phi(r_X\phi,r_Y\bar\phi).
$$
Here $\phi:I\to X\vee Y$ is a path from $x$ to $y$ and $\bar\phi$ is the reverse path from $y$ to $x$. Clearly, $r_X\phi:I\to X$ is a path from $x$ to $v$ and $r_Y\bar\phi:I\to Y$ is a path from $y$ to $v$. Hence, $(r_X\phi,r_Y\bar\phi):I\to X\times Y$ is a path from $(x,y)$ to $(v,v)$.

Next, we prove the inequality
$$\dTC(X\vee Y)\le\max\{\dTC (X),\dTC (Y),d\cat(X\times Y)\}.$$
Let $n=\max\{\dTC (X),\dTC (Y),d\cat(X\times Y)\}$. We construct a section $s$ of the fibration $\mathcal B_{n+1}(p)$ defined for the endpoints fibration $$p:P(X\vee Y)\to(X\vee Y)\times(X\vee Y)=(X\times X)\cup (Y\times Y)\cup (X\times Y)\cup (Y\times X).$$
Let $c_v:I\to X\vee Y$ denote the constant loop $c_v(t)=v$. We define $s(v,v)$ to be the Dirac measure $\delta_{c_v}$ supported by $c_v$. 
Note that the fibration $\mathcal B_{n+1}(p^X) :P(X)_{n+1}\to X \times X$ defined for the end-points fibration 
$$
p^{X}:P(X)\to X\times X
$$ 
is naturally embedded in the fibration $\mathcal B_{n+1}(p)$. Since $\dTC(X)\le n$, by Proposition~\ref{chardTC}, there is a section $$s_X:X\times X\to P(X)_{n+1}$$ of  $\mathcal B_{n+1}(p^X)$,
which is also a section of $\mathcal B_{n+1}(p)$. We may assume that $s_X(v,v)=s(v,v)$. Similarly, there is a section $$s_Y:Y\times Y\to P(X\vee Y)_{n+1}$$ of $\mathcal B_{n+1}(p)$
with $s_Y(v,v)=s(v,v)$. 

Let $(v,v)\in X\times Y$ be the base point. We define a map $$h: P_0(X\times Y)\to P(X\vee Y)$$ by the formula $h(\phi)=\phi_X\cdot\bar\phi_Y$ where $\phi=(\phi_X,\phi_Y):I\to X\times Y$.
Note that $h$ commutes with the projections 
$$p_0(\phi)=(\phi_X(0),\phi_Y(0))=(\phi_X\cdot\bar\phi_Y(0),\phi_X\cdot\bar\phi_Y(1))=p(\phi_X\cdot\bar\phi_Y)= p(h(\phi)).$$
Therefore, $h$ defines a fiberwise map $$h_{n+1}:P_0(X\times Y)_{n+1}\to P(X\vee Y)_{n+1}.$$
Since $d\cat (X\times Y)\le n$, by Proposition~\ref{chardcat}, there is a section $\sigma_{X\times Y}$ of $\mathcal B_{n+1}(p_0^{X\times Y})$.
Then $$s'_{X\times Y}=h_{n+1} \hspace{1mm} \sigma_{X\times Y} : X \times Y \to P(X \vee Y)_{n+1}$$ is a section of $\mathcal B_{n+1}(p)$ over $X\times Y$. On the set $X\vee Y=X\times v\cup v\times Y$, this section $s'$ could disagree with $s_Y\cup s_X$. We will correct it as follows.
By our assumption, $$\dim(X\vee Y)<d\cat(X\times Y)\le n.$$
We note that the fiber $F$ of the fibration $p$ is  homotopy equivalent to  the loop space $\Omega(X\vee Y)=\Omega(B\Gamma)$ where $\Gamma=\pi_1(X)\ast\pi_1(Y)$. Thus, $F$ is homotopy equivalent to the discrete space $\Gamma$. By Proposition~\ref{discrete},
the fiber $\mathcal B_{n+1}(F)$ of  $\mathcal B_{n+1}(p)$  is $(n-1)$-connected. Therefore, since $\dim(X\vee Y)<n$, there is a fiberwise deformation of $s'_{X\times Y}$ to a section $s_{X\times Y}$ that agrees with $s_Y\cup s_X$ (see Lemma 12 in~\cite{DS}).
Similarly, we construct a section $s_{Y\times X}$ that agrees with $s_Y\cup s_X$. Then $$s=s_Y\cup s_X\cup s_{X\times Y}\cup s_{Y\times X}$$ is a continuous section of $\mathcal B_{n+1}(p)$. By Proposition~\ref{chardTC}, $$\dTC(X\vee Y)\le n.$$
\end{proof}
\begin{rem} The equality in Theorem~\ref{wedge} is analogous to one from  Theorem 6 in~\cite{DS} with the  restrictions that $X$ and $Y$ are aspherical. We believe that this restriction can be dropped. On the other hand, this theorem is proven with the aim of applications to discrete groups whose classifying spaces are aspherical. 
\end{rem}

\subsection{Invariants for discrete groups}
Since $d\cat$ and $\dTC$ are homotopy invariant, they produce invariants of discrete groups. For a group $\Gamma$, define $d\cat(\Gamma)=d\cat(B\Gamma)$ and $\dTC(\Gamma)=\dTC(B\Gamma)$
where $B\Gamma=K(\Gamma,1)$ is a classifying space of the universal cover of spaces with the fundamental group $\Gamma$. By the Eilenberg-Ganea theorem~\cite{EG},
$\cat(\Gamma)=\cd(\Gamma)$, where $\cd$ is the cohomological dimension~\cite{Br}. In view of the equality $d\cat(\R P^\infty)=1$, this equality does not hold for $d\cat$, at least for groups with torsions. Still, for torsion-free groups, there is hope for the Eilenberg-Ganea type equality for $d\cat$\footnote{Recently Ben Knudson and Shmuel Weinberger
posted their preprint~\cite{KW} where they introduced a similar invariant $\acat$ for which they proved the equality $\acat(\Gamma)=\cd(\Gamma)$ for torsion free groups.
Their proof works for $d\cat$ as well.}.

However, like in the case of $\TC(\Gamma)$, the meaning of the value $\dTC(\Gamma)$ is unclear even for torsion-free groups. The following Proposition shows that like in the case of $\TC(\Gamma)$, the value $\dTC(\Gamma)$ can be anything between $\cd(\Gamma)$ and $\cd(\Gamma\times\Gamma)$.
\begin{prop}
$\dTC(\Z^m\ast\Z^n)=m+n$.
\end{prop}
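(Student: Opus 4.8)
The plan is to identify $B(\Z^m\ast\Z^n)$ as a wedge of tori, apply Theorem~\ref{wedge}, and evaluate the three terms on its right-hand side using Theorem~\ref{topgroup} and Proposition~\ref{spheres}. Since the classifying space of a free product is the wedge of the classifying spaces, we have $B(\Z^m\ast\Z^n)=B\Z^m\vee B\Z^n=T^m\vee T^n$, where $T^k=\prod_{i=1}^kS^1$ denotes the $k$-torus. Both $T^m$ and $T^n$ are finite aspherical complexes with $\dim T^k=k$. We may assume without loss of generality that $1\le m\le n$: if $m=0$, then $\Z^0\ast\Z^n=\Z^n$ and $\dTC(\Z^n)=\dTC(T^n)=n$ directly by Theorem~\ref{topgroup} and Proposition~\ref{spheres}.

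Next I would compute the relevant invariants of tori. Because $T^k$ is a topological group, Theorem~\ref{topgroup} gives $\dTC(T^k)=d\cat(T^k)$, and Proposition~\ref{spheres}, applied to the product of $k$ circles, gives $d\cat(T^k)=k$; hence $\dTC(T^k)=k$. Moreover $T^m\times T^n=T^{m+n}$, so $d\cat(T^m\times T^n)=m+n$ by Proposition~\ref{spheres} again. Therefore
$$\max\{\dTC(T^m),\dTC(T^n),d\cat(T^m\times T^n)\}=\max\{m,n,m+n\}=m+n,$$
while $\max\{\dim T^m,\dim T^n\}=n<m+n$, so the dimension hypothesis of Theorem~\ref{wedge} is met. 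Applying that theorem gives $\dTC(\Z^m\ast\Z^n)=\dTC(T^m\vee T^n)=m+n$.

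The argument is an assembly of earlier results, so I do not expect a serious obstacle; the only points requiring a little care are checking that the strict inequality $\max\{\dim X,\dim Y\}<\max\{\dTC(X),\dTC(Y),d\cat(X\times Y)\}$ in the hypothesis of Theorem~\ref{wedge} really holds here (it does, exactly because $m\ge 1$ forces $n<m+n$), and handling the degenerate case $m=0$ separately, as indicated above.
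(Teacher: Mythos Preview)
Your proof is correct and follows essentially the same route as the paper: identify $B(\Z^m\ast\Z^n)=T^m\vee T^n$, treat the degenerate case where one factor is trivial directly via Theorem~\ref{topgroup} and Proposition~\ref{spheres}, and in the nondegenerate case compute $\dTC(T^k)=d\cat(T^k)=k$ and $d\cat(T^{m+n})=m+n$ to verify the hypothesis of Theorem~\ref{wedge} and read off the answer. The only cosmetic difference is that you assume $m\le n$ while the paper assumes $m\ge n$, and you spell out the dimension check a bit more explicitly.
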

\begin{proof}
Note that the natural classifying spaces for the groups $\mathbb{Z}^{m}$ and $\mathbb{Z}^{n}$ are the tori $T^m$ and $T^n$, respectively.
We assume that $m\ge n$. For $n=0$, since $T^m$ is a topological group, by Theorem~\ref{topgroup}, we have $\dTC(T^m)=d\cat(T^m)=m$, where the last equality follows from Proposition~\ref{spheres}. For $n>0$, we obtain $d\cat(T^m\times T^n)=m+n$ and the condition of Theorem~\ref{wedge} is satisfied. Then
$$
\dTC(T^m\vee T^n)=\max\{m, n, m+n\}=m+n.
$$
The observation that $T^m\vee T^n$ is a classifying space for the free product $\Z^m\ast\Z^n$ completes the proof.
\end{proof}

\section{Epilogue}\label{Epilogue}
As it was mentioned in section 3, there is a natural map $$\Phi:\mathcal B_n(P(X))\to P(\mathcal B_n(X))$$ defined as $\Phi(\sum\lambda_\phi\phi)(t)=\sum\lambda_\phi\phi(t)$.
The map $\Phi$ 
takes $\mathcal B_n(P(x,y))$ to $P(\delta_x,\delta_y)$.
We call a path $f:I\to \mathcal B_n(X)$ {\em resolvable} if  $f=\Phi(\mu)$ for some $\mu$. 
We define a navigation algorithm on $X$ by means of resolvable paths as a continuous map 
$$
m:X\times X\to P(\mathcal B_{n+1}(X))
$$
such that $m(x,y)$ is a resolvable path from $\delta_x$ to $\delta_y$ for each $(x,y)\in X\times X$.
It leads to the numerical invariant $\idTC(X)$ defined as the minimal $n$ such that $X$ admits the above navigation algorithm.
Clearly, $\idTC(X)\le\dTC(X)$. 
The difference between $\idTC$ and $\dTC$ is that in the latter a path from $x$ to $y$ can be uniquely defined as the image of $\le n+1$ distinct (unordered) strings with fixed distribution,
 whereas in the former case such presentation does exist but not unique, since the strings can intertwine
while keeping the distribution unchanged. By this reason
we call such numerical invariant the {\em intertwining distributional topological complexity} (idTC) .

Suppose that our robot (system) is so advanced that it not only can break into pieces before moving from position $x$ to position $y$ but its pieces can randomly rejoin and split during the motion. To cover this case we introduce the notion of locally resolvable path in $\mathcal B_n(X)$.
A path $f:I\to \mathcal B_n(X)$ is {\em resolvable at point} $t\in I$ if there exists $\epsilon>0$ such that the restrictions $f_{t+}=f|_{[t,t+\epsilon]}$ and $f_{t-}=f|_{[t-\epsilon,t]}$ treated as the elements of $P(\mathcal B_n(X))$ are resolvable. We note that this does not imply that the path $f_{[t-\epsilon,t+\epsilon]}$ is resolvable. This allows the distribution of mass vary
on the way from position $x$ to position $y$.
A path $f:I\to \mathcal B_n(X)$ is called {\em locally resolvable} if it is resolvable at each point $t\in I$.

In that case, a continuous navigation algorithm will be a continuous map
$$
v:X\times X\to P(\mathcal B_{n+1}(X))
$$
such that for all $x$,$y\in X$ the path $v(x,y)\in P(\delta_x,\delta_y)$  is locally resolvable.

The minimal number $n$ such that there exists such a continuous navigation algorithm $v$ is called  the \emph{varying distribution complexity}, denoted $\vdTC(X)$, of the configuration space $X$. Similarly, one can define the \emph{intertwining distributional LS-category}, $id\cat(X)$, and {\em varying distribution LS-category}, $vd\cat(X)$. 
We have   the following  inequalities:
$$\vdTC(X)\le\idTC(X)\le \dTC(X)$$ and $$vd\cat(X)\le id\cat(X)\le d\cat(X).$$


\begin{thebibliography}{CLOT}

\bibitem[Br]{Br} K. Brown, {\em Cohomology of groups}, Springer, New York-Heidelberg-Berlin, 1982.
\smallskip
\bibitem[CLOT]{CLOT} O. Cornea, G. Lupton, J. Oprea, D. Tanre,  \textit{Lusternik-Schnirelmann Category}, Mathematical Surveys and Monographs \textbf{103}, AMS 2003.
\smallskip

\bibitem[D]{D} A. Dold, {\em Homology of Symmetric Product and Other Functors of Complexes,} Annals of Math. (1), \textbf{68} (1958), 54-80.
\smallskip
 \bibitem[DT]{DT} A. Dold, R. Thom, \textit{Quasifaserungen und unendliche symmetrische Produkte} (in German), Annals of Math. (2), \textbf{67} (1958), 239-281.
\smallskip

\bibitem[DS]{DS} A. Dranishnikov, R. Sadykov, \textit{The topological complexity of the free product}, Math. Z. (1-2), \textbf{293} (2019), 407-416.
\smallskip

\bibitem[DS1]{DS1} A. Dranishnikov, R. Sadykov, \textit{On the LS-category and topological complexity of a connected sum}, Proc. Amer. Math. Soc. (5), \textbf{147} (2019), 2235-2244.

\smallskip

\bibitem[DS2]{DS2} A. Dranishnikov, R. Sadykov, \textit{The Lusternik-Schniremann category of a connected sum}, Fund. Math. (3), \textbf{251} (2020), 317-328. 
           
\smallskip
\bibitem[EG]{EG} S. Eilenberg, T. Ganea, \textit{On the Lusternik-Schnirelmann Category of Abstract Groups}, Annals of
Math. (3), \textbf{65} (1957), 517-518.
\smallskip
 \bibitem[Far1]{Far1} M. Farber, \textit{Invitation to topological robotics}, Zurich Lectures in Advanced Mathematics, EMS, 2008.

 \smallskip

 \bibitem[Far2]{Far2} M. Farber, \textit{Topological Complexity of Motion Planning}, Discrete and
Computational Geometry, \textbf{29} (2003), 211-221.

\smallskip


 \bibitem[FTY]{FTY} M. Farber, S. Tabachnikov, S. Yuzvinsky, \textit{Topological robotics: motion planning in projective spaces}, Intl. Math. Research Notices, \textbf{34} (2003), 1853-1870.


\smallskip

\bibitem[FFG]{FFG} A. T. Fomenko, D. B. Fuchs, V. L. Gutenmacher, \textit{Homotopic Topology}, Akademial Kiado, Budapest, 1986.

\smallskip

\bibitem[GC]{GC} J. M. Garcia-Calcines, \textit{A note on covers defining relative and sectional categories}, Topology Appl., \textbf{265} (2019), Paper No. 106810, 14 pp.


\smallskip
\bibitem[GLO]{GLO} M. Grant, G. Lupton, J. Oprea, \textit{Spaces of topological complexity one}, Homology Homotopy Appl. (2), \textbf{15} (2013), 73-81.

\smallskip

\bibitem[J]{J} I. M. James, \textit{On category, in the sense of Lusternik-Schnirelmann}, Topology, \textbf{17} (1978), 331-348.

\smallskip

\bibitem[KK]{KK} S. Kallel, R. Karouri,  {\em Symmetric joins and weighted barycenters}, Adv. Nonlinear Stud. (1), \textbf{11} (2011), 117-143.

\smallskip

\bibitem[KW]{KW} B. Knudson, S. Weinberger, {\em Analog category and complexity}, Preprint 2024, arXiv 2401.15667.

\smallskip
\bibitem[LS]{LS} G. Lupton, S. Scherer, \textit{Topological complexity of $H$-spaces}, Proc. Amer. Math. Soc. (5), \textbf{141} (2013), 1827-1838.
\smallskip


\bibitem[LuS]{LuS}  L. Lusternik, L. Schnirelmann, \emph{Sur le probl\`eme de trois g\'eodesiques ferm\'ees sur les surfaces de genre 0}, C. R. Acad. Sci.  Paris, \textbf{189} (1929), 269-271.

\smallskip

\bibitem[N]{N} M. Nakaoka, \textit{Cohomology of symmetric products}, J. Inst. Polytech., Osaka City U. (2A), \textbf{8} (1957), 121-145.

\smallskip

\bibitem[P] {P}Yu. V. Prokhorov, {\em Convergence of random processes and limit theorems in probability theory}, Theory Probab. Appl., \textbf{1} (1956), 157–214. 

\smallskip
\bibitem[Sch]{Sch} A. S. Schwarz, \textit{The genus of a fiber space}, Amer. Math. Soc. Transl. (2), \textbf{55} (1966), 49–140.

\smallskip

\bibitem[Sr]{Sr} T. Srinivasan, \textit{The Lusternik–Schnirelmann category of metric spaces}, Topology Appl., \textbf{167} (2014), 87-95.

\smallskip

\bibitem[TD]{TD} T. tom Dieck, \textit{Algebraic Topology}, EMS, 2008.
\end{thebibliography}
\end{document}